\def\invlim{\mathop{\vtop{\ialign{##\crcr$\hfill{\lim}\hfil$\crcr
\noalign{\kern1pt\nointerlineskip}\leftarrowfill\crcr\noalign
{\kern -3pt}}}}\limits}
\def\dirlim{\mathop{\vtop{\ialign{##\crcr$\hfill{\lim}\hfil$\crcr
\noalign{\kern1pt\nointerlineskip}\rightarrowfill\crcr\noalign
{\kern -3pt}}}}\limits}
\def\lomapr#1{\smash{\mathop{\relbar\joinrel\longrightarrow}\limits^{#1}}}
 \def\verylomapr#1{\smash{\mathop{\relbar\joinrel\relbar\joinrel\relbar\joinrel\longrightarrow}\limits^{#1}}}
\def\epsilon{\varepsilon}
\let\mathcal\mathscr
\newtheorem{theorem}[equation]{Theorem}
 \newtheorem{lemma}[equation]{Lemma}
 \newtheorem{proposition}[equation]{Proposition}
 \newtheorem{corollary}[equation]{Corollary}
\theoremstyle{definition}
\newtheorem{definition}[equation]{Definition}
\theoremstyle{remark}
\newtheorem{remark}[equation]{Remark}
\newtheorem*{acknowledgments}{Acknowledgments}
\let\emptyset\varnothing
\let\cal\mathcal
\def\phi{\varphi}
\def\Q{{\bf Q}} \def\Z{{\bf Z}}
\def\zp{\Z_p} \def\qp{\Q_p}
\def\OC{{\mathcal{O}_C}}
\def\N{{\bf N}}
\def\O{{\cal O}}
\def\epsilon{\varepsilon}
\def\Ainf{{\mathbb{A}_{\rm inf}}}
 \def\A{{\bf A}}
\def\rg{{\rm R}\Gamma}
\def\verylomapr#1{\smash{\mathop{\relbar\joinrel\relbar\joinrel\relbar\joinrel\longrightarrow}\limits^{#1}}}
\newcommand{\ao}{{A\Omega_{\calx}}}
\newcommand{\calx}{\mathfrak{X}}
\newcommand{\R}{\mathrm {R} }
\newcommand{\LL}{\mathrm {L} }
\newcommand{\ovk}{\overline{K} }
\newcommand{\gp}{\operatorname{gp} }
 \newcommand{\cl}{\operatorname{cl} }
 \newcommand{\holim}{\operatorname{holim} }
  \newcommand{\proet}{\operatorname{pro\acute{e}t}  }
 \newcommand{\eet}{\operatorname{\acute{e}t} }
 \newcommand{\dlog}{\operatorname{dlog} }
 \newcommand{\Spf}{\operatorname{Spf} }
 \newcommand{\Hom}{\operatorname{Hom} }
 \newcommand{\can}{ \operatorname{can} }
 \newcommand{\id}{ \operatorname{Id} }
\newcommand{\hk}{\operatorname{HK} }
\newcommand{\dr}{\operatorname{dR} }
  \newcommand{\htt}{\operatorname{HT}}
 \newcommand{\sh}{{\mathcal{H}}}
 \newcommand{\sg}{{\mathcal{G}}}
  \newcommand{\su}{{\mathcal{U}}}
 \newcommand{\so}{{\mathcal O}}
 \newcommand{\wt}{\widetilde}
 \newcommand{\wh}{\widehat}
      \def\A{{\bf A}}
\def\invlim{\mathop{\vtop{\ialign{##\crcr$\hfill{\lim}\hfil$\crcr
\noalign{\kern1pt\nointerlineskip}\leftarrowfill\crcr\noalign
{\kern -3pt}}}}\limits}
\def\dirlim{\mathop{\vtop{\ialign{##\crcr$\hfill{\lim}\hfil$\crcr
\noalign{\kern1pt\nointerlineskip}\rightarrowfill\crcr\noalign
{\kern -3pt}}}}\limits}
\def\lomapr#1{\smash{\mathop{\relbar\joinrel\longrightarrow}\limits^{#1}}}
 \def\verylomapr#1{\smash{\mathop{\relbar\joinrel\relbar\joinrel\relbar\joinrel\longrightarrow}\limits^{#1}}}
\def\epsilon{\varepsilon}
\let\mathcal\mathscr
\numberwithin{equation}{section}
\begin{document}
\title[Integral $p$-adic \'etale cohomology of Drinfeld symmetric spaces]
{Integral $p$-adic \'etale cohomology of Drinfeld symmetric spaces}
\author{Pierre Colmez}
\address{CNRS, IMJ-PRG, Sorbonne Universit\'e, 4 place Jussieu,
75005 Paris, France}
\email{pierre.colmez@imj-prg.fr}
\author{Gabriel Dospinescu}
\address{CNRS, UMPA, \'Ecole Normale Sup\'erieure de Lyon, 46 all\'ee d'Italie, 69007 Lyon, France}
\email{gabriel.dospinescu@ens-lyon.fr}
\author{Wies{\l}awa Nizio{\l}}
\address{CNRS, UMPA, \'Ecole Normale Sup\'erieure de Lyon, 46 all\'ee d'Italie, 69007 Lyon, France}
\email{wieslawa.niziol@ens-lyon.fr}
 \thanks{This research was partially supported by the  project  ANR-14-CE25 and the NSF grant No. DMS-1440140.}
\begin{abstract}
We compute the  integral $p$-adic \'etale cohomology of  Drinfeld symmetric spaces of any dimension. This refines the computation of  the rational $p$-adic \'etale cohomology from \cite{CDN3}. The main tools are: the computation of the integral de Rham cohomology from \cite{CDN3} and the integral $p$-adic comparison theorems of Bhatt-Morrow-Scholze and \v{C}esnavi\v{c}ius-Koshikawa which replace the quasi-integral comparison theorem of Tsuji used in \cite{CDN3}. 
\end{abstract}
\setcounter{tocdepth}{2}

\maketitle

{\Small
\tableofcontents
}

\section{Introduction}

  Let $p$ be a prime number,
    $K$ a finite extension of 
   $\qp$,  and $C$ the $p$-adic completion of an algebraic closure $\ovk$  of $K$.
   Drinfeld's symmetric space of dimension $d$ over $K$ is the rigid analytic variety 
    $${\mathbb H}^d_K:={\mathbb P}^d_K\setminus \cup_{H\in \mathcal{H}} H,$$
    where 
    $\mathcal{H}$ is the space of $K$-rational hyperplanes
         in $K^{d+1}$. It is equipped with an action of 
                   $G={\rm GL}_{d+1}(K)$.
                  One of the main results of \cite{CDN3} is the description of the $G\times \mathcal{G}_K$-modules
                  $H^i_{\eet}({\mathbb H}^d_C, \qp(i))$, where ${\mathbb H}^d_C:={\mathbb H}^d_{K,C}$ and $\mathcal{G}_K={\rm Gal}(\overline{K}/K)$. The analogous result for  $\ell$-adic \'etale cohomology, $\ell\neq p$,  is a classical result of Schneider and Stuhler \cite{SS}. It relies on the fact that  $\ell$-adic \'etale cohomology satisfies a homotopy property with respect to the open ball (a fact that is false for $p$-adic \'etale cohomology).
                  
                  The goal of this paper is to refine our   result, by describing the integral $p$-adic \'etale cohomology groups $H^i_{\eet}({\mathbb H}^d_C, \zp(i))$.
                  Recall that, for  $i\geq 0$, there is a natural generalized Steinberg representation ${\rm Sp}_i(\zp)$ of $G$ (see Section \ref{Steinberg} for the precise definition). We endow it with the trivial action of $\mathcal{G}_K$ and we write 
                  ${\rm Sp}_i(\zp)^*$ for its $\zp$-dual. 
                  
                  The main result of this paper is the following: 
                                    
\begin{theorem}\label{key11}
Let  $i\geq 0$. There are compatible  topological isomorphisms of $G\times \mathcal{G}_K$-modules
$$H^i_{\eet}({\mathbb H}^d_C, \zp(i))\simeq {\rm Sp}_i(\zp)^*, \quad H^i_{\eet}({\mathbb H}^d_C, {\mathbf F}_p(i))\simeq {\rm Sp}_i(\mathbf{F}_p)^*,  $$
compatible with the isomorphism $H^i_{\eet}({\mathbb H}^d_C, \qp(i))\simeq {\rm Sp}_i(\zp)^*\otimes_{\zp} \qp$ 
from \cite{CDN3}.  In particular, for $i>d$ these cohomology groups are trivial.
\end{theorem}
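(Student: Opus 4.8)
The plan is to transport the \emph{integral} de Rham and Hyodo--Kato computations of \cite{CDN3} to \'etale cohomology through integral syntomic cohomology, the integral comparison theorems playing the role of Tsuji's quasi-integral one. Drinfeld's space ${\mathbb H}^d_K$ carries its standard semistable weak formal model $\calx$ over $\O_K$, obtained from ${\mathbb P}^d_{\O_K}$ by the sequence of blow-ups indexed by the simplices of the Bruhat--Tits building of ${\rm PGL}_{d+1}(K)$; its special fibre is a locally finite union of smooth components meeting transversally, so $\calx$ is log-smooth over $\O_K$ for the log structure defined by the special fibre, and $\calx$ is Stein, i.e.\ an increasing union of quasi-compact log-smooth opens. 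One disposes of an integral syntomic cohomology $R\Gamma_{\mathrm{syn}}({\mathbb H}^d_C,\zp(r))$ --- intrinsic to the rigid space, hence carrying the $G$-action, and computable via $\calx$ --- together with a Fontaine--Messing period map, continuous and $G\times\mathcal{G}_K$-equivariant, to $R\Gamma_{\eet}({\mathbb H}^d_C,\zp(r))$. The key input --- \textbf{this is the point at which the integral comparison theorems of Bhatt--Morrow--Scholze and \v{C}esnavi\v{c}ius--Koshikawa replace Tsuji's} --- is that, applied to the quasi-compact log-smooth pieces of $\calx$ and passed to the Stein limit, these theorems make the period map an isomorphism on $H^j$ for $j\le r$, \emph{with no bounded-torsion ambiguity}; in particular $H^i_{\mathrm{syn}}({\mathbb H}^d_C,\zp(i))\xrightarrow{\sim}H^i_{\eet}({\mathbb H}^d_C,\zp(i))$, and likewise with $\mathbf{F}_p$-coefficients. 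The passage to the limit needs the vanishing of $R^1\lim$, which follows from the finitely generated, explicit shape of the cohomology of the quasi-compact pieces.

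\emph{Inserting the \cite{CDN3} computation.} By the fundamental exact sequence of syntomic cohomology, $R\Gamma_{\mathrm{syn}}({\mathbb H}^d_C,\zp(r))$ is the homotopy fibre of a map built from the integral Hyodo--Kato complex $R\Gamma_{\hk}({\mathbb H}^d_C)$, with its Frobenius $\varphi$ and monodromy $N$, and the de Rham complex $R\Gamma_{\dr}({\mathbb H}^d_C)$ with its Hodge filtration. In \cite{CDN3} these integral invariants of ${\mathbb H}^d$ were computed: in each degree $i\le d$ they are \emph{$\zp$-torsion-free} and, up to the Tate twist/Frobenius structure, identified with ${\rm Sp}_i(\zp)^*$; moreover the Hodge filtration on $H^i_{\dr}$ jumps in a single step, $\varphi$ is pure of slope $i$ on $H^i_{\hk}$, $N$ vanishes on the relevant part, and everything vanishes for $i>d$ (as ${\mathbb H}^d$ is Stein of dimension $d$). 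Substituting this into the fundamental sequence --- exactly as in \cite{CDN3} for $\qp$-coefficients, the argument going through \emph{integrally} because the relevant lattices are torsion-free and the structure maps are as stated --- one finds that in degree $i$ the ``$1-\varphi/p^i$'' contribution vanishes, $F^i$ is everything, and the homotopy fibre is, in degree $i$, exactly ${\rm Sp}_i(\zp)^*$; the $\mathcal{G}_K$-action is trivial because the twist $(i)$ cancels the slope-$i$ Frobenius together with the single-step filtration. This yields the first isomorphism of Theorem \ref{key11}, and the vanishing of $R\Gamma_{\hk}$ and $R\Gamma_{\dr}$ above degree $d$ yields the triviality for $i>d$; compatibility with the rational statement of \cite{CDN3} is automatic, every arrow being the $\otimes_{\zp}\qp$ of the above. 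The mod-$p$ isomorphism follows by running the same argument with $\mathbf{F}_p$-coefficients (the mod-$p$ Hyodo--Kato and de Rham cohomologies being the reductions of the torsion-free integral ones, hence ${\rm Sp}_i(\mathbf{F}_p)^*$), or equivalently from the Bockstein sequence, using that $H^\bullet_{\eet}({\mathbb H}^d_C,\zp(\bullet))\simeq{\rm Sp}_\bullet(\zp)^*$ is torsion-free (being a $\zp$-dual) and ${\rm Sp}_i(\zp)^*/p\simeq{\rm Sp}_i(\mathbf{F}_p)^*$ (the flatness recorded in Section \ref{Steinberg}).

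\emph{Main obstacle.} The real work lies in the first step: upgrading the Bhatt--Morrow--Scholze/\v{C}esnavi\v{c}ius--Koshikawa comparison --- cleanest for \emph{proper} log-smooth formal schemes --- to the Stein space ${\mathbb H}^d$, so that (a) the Fontaine--Messing map is an honest isomorphism on $H^{\le r}$, with no residual $p^N$-factor (precisely the improvement over Tsuji), and (b) the finitely generated integral structure is not destroyed in passing from the quasi-compact pieces of the blow-up model to the Stein limit (control of $R^1\lim$ and of topologies). A secondary, more bookkeeping, obstacle is to identify the lattice ${\rm Sp}_i(\zp)^*$ produced by the fundamental sequence with the abstract one of Section \ref{Steinberg}, and to check ${\rm Sp}_i(\zp)^*/p\simeq{\rm Sp}_i(\mathbf{F}_p)^*$ --- a combinatorial statement about generalized Steinberg modules for ${\rm GL}_{d+1}(K)$.
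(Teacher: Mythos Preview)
Your proposal routes through integral syntomic cohomology and a Fontaine--Messing period map, asserting that the Bhatt--Morrow--Scholze/\v{C}esnavi\v{c}ius--Koshikawa comparison, applied to quasi-compact pieces and passed to the Stein limit, upgrades this period map to an honest integral isomorphism $H^i_{\mathrm{syn}}\simeq H^i_{\eet}$. This is precisely the step the authors explain they could \emph{not} carry out: the introduction says that ``the syntomic method, suitably adapted, works well only up to some absolute constants,'' and that ``the proofs in [BMS1] and [CK] rely on the properness of the varieties and it is not clear how to adapt them to our context.'' The comparison theorems of BMS1/CK are formulated for proper formal schemes, and the way they relate \'etale and $A_{\rm inf}$-cohomology there does not obviously pass to a Stein limit; you acknowledge this as ``the real work'' but supply no argument. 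So the central claim of your first paragraph is a gap, not a step.

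The paper's actual route avoids syntomic cohomology altogether. The replacement for the period map is the nearby-cycles formula of \cite{BMS2} (Theorem~\ref{BMS2}/Corollary~\ref{BMS22} here), valid for \emph{any} flat formal model with no properness hypothesis and no torsion constants: a quasi-isomorphism identifying $\tau_{\le i}\R\nu_*\wh{\Z}_p(i)$ with the homotopy fibre of $1-\varphi^{-1}$ on $\tau_{\le i}\ao\{i\}$. One is then reduced to computing $H^i_{\eet}(\calx,\ao\{i\})$ itself (Theorem~\ref{main}): an explicit regulator $r_{\inf}:A_{\rm inf}\wh{\otimes}_{\zp}{\rm Sp}_i(\zp)^*\to H^i_{\eet}(\calx,\ao\{i\})$ is built by integrating $A_{\rm inf}$-symbols against distributions on $\mathcal{H}^{i+1}$, and shown to be an isomorphism by derived Nakayama modulo $\tilde\xi$, where the Hodge--Tate specialization $\ao/\tilde\xi\simeq\wt{\Omega}_{\calx}$ and Grosse-Kl\"onne's acyclicity reduce everything to the Hodge--Tate regulator already computed in \cite{CDN3}. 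Since $A_{\rm inf}^{\varphi^{-1}=1}=\zp$ and $1-\varphi^{-1}$ is surjective on $A_{\rm inf}$, the Artin--Schreier sequence then collapses to give Theorem~\ref{key11}. In summary: the paper does not remove the constants from the syntomic period map --- it sidesteps that map entirely via $A_{\rm inf}$-cohomology and the BMS2 formula.
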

If $d=1$ and $i=1$, this result is due to Drinfeld~\cite{Drinfeld} 
(with a shaky proof corrected in~\cite{FDP}; see also~\cite[Th.\,1.7]{CDN1}).
                  
\medskip
                                     
\noindent{\em \'Etale cohomology and $A_{\rm inf}$-cohomology}.
We will describe now the key ideas and difficulties occurring in the proof of Theorem \ref{key11}.
 As in \cite{CDN3}, a key input is the 
pro-ordinarity of the standard semistable formal model $\mathfrak{X}_{\mathcal{O}_K}$ of ${\mathbb H}^d_K$, a result 
due to Grosse-Kl\"onne \cite{GK1}.
 More precisely, he proved that 
\begin{equation}
\label{GK00}
H^i(\mathfrak{X}_{\so_K}, \Omega^j_{\mathfrak{X}_{\mathcal{O}_K}})=0,\quad i\geq 1, j \geq 0,
\end{equation}
where $\Omega^{\bullet}_{\mathfrak{X}_{\mathcal{O}_K}}$ is the logarithmic de Rham complex of 
$\mathfrak{X}_{\mathcal{O}_K}$ over $\mathcal{O}_K$ (for the canonical log-structures of $\mathfrak{X}_{\so_K}$ and $\so_K$).
 One easily infers from this that 
$\mathfrak{X}_{\so_K}$ is ordinary in the usual sense \cite{CDN3}.
 The strongest (and easiest) integral $p$-adic comparison theorems are available for ordinary varieties, making it natural to try to adapt them to $\mathfrak{X}_{\mathcal{O}_K}$.
 Nevertheless, the fact that $\mathfrak{X}_{\mathcal{O}_K}$ is not quasi-compact seems to be a serious obstacle in implementing the usual strategy \cite[Ch. 7]{BK} to our setup.
 The syntomic method, suitably adapted \cite{CDN3}, works well only up to some absolute constants, and reduces the computation of $H^i_{\eet}({\mathbb H}^d_C, \qp(i))$ to that of the (integral) Hyodo-Kato cohomology of the special fiber of $\mathfrak{X}_{\mathcal{O}_K}$, which was done  in \cite{CDN3}.
 The latter computation can be done integrally and also shows that the de Rham cohomology of $\mathfrak{X}_{\mathcal{O}_K}$ is $p$-torsion-free. 
           
The results of Bhatt-Morrow-Scholze \cite{BMS1} (adapted to the semistable reduction setting by \v{C}esnavi\v{c}ius-Koshikawa \cite{CK}) show that, 
for proper rigid analytic varieties with semistable reduction, if  the de Rham cohomology of the semistable  integral model is $p$-torsion free (equivalently, if the integral Hyodo-Kato  cohomology of the special fiber is $p$-torsion free) so  is the $p$-adic \'etale cohomology of the generic fiber.
 Combined with \cite{CDN3} and with the rigidity of $G$-invariant lattices in ${\rm Sp}_i(\qp)$ (a result due to Grosse-Kl\"onne \cite{GKir}), this would yield our main result.
 The problem is that the proofs in \cite{BMS1} and \cite{CK} rely  on the properness of the varieties and it is not clear how to adapt them to our context.
 However, the key actor in {\em loc. cit.}  makes perfect sense: the $A_{\rm inf}$-cohomology.
 One then needs a way to read the $p$-adic \'etale cohomology in terms of the 
$A_{\rm inf}$-cohomology, which can be done even for non quasi-compact varieties thanks to a remarkable (especially due to its simplicity!) formula in \cite{BMS2} (the way $p$-adic \'etale cohomology and $A_{\rm inf}$-cohomology are related in \cite{BMS1} is rather different and does not seem to be very useful in our case).
 This reduces the proof of our main theorem to the computation of the $A_{\rm inf}$-cohomology. 
           
More precisely, let $A_{\rm inf}=W(\mathcal{O}_C^{\flat})$ be Fontaine's ring associated to $C$.
The choice of a compatible system of 
primitive $p$-power roots of unity $(\zeta_{p^n})_{n}$ gives rise to an element $\mu=[\varepsilon]-1\in A_{\rm inf}$ (where $\varepsilon$ corresponds to $(\zeta_{p^n})_{n}$ under the identification $\mathcal{O}_C^{\flat}=\varprojlim_{x\mapsto x^p} \mathcal{O}_C$).
This, in turn, induces a modified Tate twist $M\to M\{i\}:=M\otimes_{A_{\rm inf}} A_{\rm inf}\{i\}$, $i\geq 0$,  on the category of $A_{\rm inf}$-modules, where 
$A_{\rm inf}\{1\}:= \frac{1}{\mu}A_{\rm inf}(1)$, $A_{\rm inf}\{i\}:=A_{\rm inf}\{1\}^{\otimes i}$.
 Let $X={\mathbb H}^d_C$ and $\mathfrak{X}=\mathfrak{X}_{\mathcal{O}_K}\otimes_{\mathcal{O}_K} \mathcal{O}_C$.
 Using the projection from the pro-\'etale site of $X$ to the \'etale site of $\mathfrak{X}$ and a relative version of Fontaine's construction of the ring $A_{\rm inf}$, one constructs in \cite{BMS1}, \cite{CK} a complex of sheaves of $A_{\rm inf}$-modules
           $\ao$ on the \'etale site of $\mathfrak{X}$, which allows one to interpolate between \'etale, crystalline, and de Rham cohomology of $X$ and $\mathfrak{X}$. 
           
The technical result we prove is then:
           
\begin{theorem}\label{key2}
Let $i\geq 0$. 
There is a topological $\phi^{-1}$-equivariant isomorphism of $G\times \mathcal{G}_K$-modules
$$H^i_{\eet}(\mathfrak{X}, \ao\{i\})\simeq A_{\rm inf}\widehat{\otimes}_{\zp} {\rm Sp}_i(\zp)^*.$$
\end{theorem}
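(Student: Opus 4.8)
\emph{Strategy.} The plan is to compute $\rg(\mathfrak{X}_{\eet},\ao)$ --- together with its Frobenius and $G\times\mathcal{G}_K$-action --- by exploiting three structures carried by the $A\Omega$-complex: the de Rham specialization $\ao\otimes^{\mathbf{L}}_{A_{\rm inf}}\mathcal{O}_C$ (along a suitable map $A_{\rm inf}\to\mathcal{O}_C$, up to a Frobenius twist), the crystalline/Hyodo--Kato specialization, and the \'etale comparison $\ao[\tfrac1\mu]\simeq\rg_{\eet}(X,\zp)\otimes_{\zp}A_{\rm inf}[\tfrac1\mu]$ (up to a Tate twist). Into these one feeds the pro-ordinarity \eqref{GK00} and the computations of \cite{CDN3}: for an ordinary semistable model the de Rham and Hyodo--Kato specializations are $p$-torsion-free and explicitly known (integrally!), and the $\mu$-localization is governed by the rational \'etale cohomology; a torsion-freeness argument in the style of \cite{BMS1}, combined with the rigidity of $G$-stable lattices in ${\rm Sp}_i(\qp)$ \cite{GKir}, then forces $H^i(\mathfrak{X}_{\eet},\ao)\{i\}\simeq A_{\rm inf}\widehat\otimes_{\zp}{\rm Sp}_i(\zp)^*$.

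\emph{Reduction to the quasi-compact pieces.} As $\mathfrak{X}$ is not quasi-compact I would first fix the admissible exhaustion $\mathfrak{X}=\bigcup_n\mathfrak{X}_n$ by quasi-compact open formal subschemes --- those attached to the simplices at distance $\le n$ from a fixed vertex of the Bruhat--Tits building, as in \cite{CDN3} --- with $\mathfrak{X}_n\hookrightarrow\mathfrak{X}_{n+1}$ open immersions and $X_n$ the corresponding generic fibers. The construction of $\ao$ and all of its \emph{local} and \emph{sheaf-level} properties in \cite{BMS1}, \cite{CK} are insensitive to properness, hence hold after restriction to each $\mathfrak{X}_{n,\eet}$. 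For the \emph{global} consequences on $\mathfrak{X}_n$ --- finiteness of $\rg(\mathfrak{X}_{n,\eet},\ao)$ and the base-change isomorphisms
$$\rg(\mathfrak{X}_{n,\eet},\ao)\otimes^{\mathbf{L}}_{A_{\rm inf}}\mathcal{O}_C\simeq\rg_{\dr}(\mathfrak{X}_n/\mathcal{O}_C),\qquad\rg(\mathfrak{X}_{n,\eet},\ao)[\tfrac1\mu]\simeq\rg_{\eet}(X_n,\zp)\otimes_{\zp}A_{\rm inf}[\tfrac1\mu]$$
(up to the relevant Frobenius and Tate twists) --- I would not invoke the proper case of \cite{BMS1}, \cite{CK} but rather combine the sheaf-level input with the explicit computations of \cite{CDN3}, in particular with the $p$-torsion-freeness of $H^*_{\dr}(\mathfrak{X}_n/\mathcal{O}_C)$ (equivalently, of the integral Hyodo--Kato cohomology of the special fiber). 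The BMS dichotomy ``$p$-torsion-free de Rham cohomology $\Rightarrow$ finitely presented, $p$-torsion-free $A_{\rm inf}$-cohomology with no derived correction in the $\theta$-reduction'' then shows that each $H^i(\mathfrak{X}_{n,\eet},\ao)$ is a \emph{finite free} $A_{\rm inf}$-module with $\theta$-reduction $H^i_{\dr}(\mathfrak{X}_n/\mathcal{O}_C)$ and $\mu$-localization $H^i_{\eet}(X_n,\qp)\otimes_{\qp}A_{\rm inf}[\tfrac1\mu]$ (suitably twisted).

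\emph{Passage to the limit and identification.} The inverse system $\{H^i(\mathfrak{X}_{n,\eet},\ao)\}_n$ satisfies the Mittag--Leffler condition --- here the explicit description of the restriction maps on cohomology from \cite{CDN3} is what is used --- so $\rg(\mathfrak{X}_{\eet},\ao)\simeq\R\lim_n\rg(\mathfrak{X}_{n,\eet},\ao)$ has no $\lim^1$-term, $H^i(\mathfrak{X}_{\eet},\ao)=\lim_n H^i(\mathfrak{X}_{n,\eet},\ao)$ is a ($p$-, $\xi$- and $\mu$-)torsion-free $A_{\rm inf}$-module endowed with its natural Fr\'echet topology, and the two structural isomorphisms above pass to the limit \emph{as topological isomorphisms}. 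Writing $H^i:=H^i(\mathfrak{X}_{\eet},\ao)\{i\}$: the de Rham specialization, \eqref{GK00} and the integral de Rham computation of \cite{CDN3} identify $H^i\otimes_{A_{\rm inf},\theta}\mathcal{O}_C$ with $\mathcal{O}_C\widehat\otimes_{\zp}{\rm Sp}_i(\zp)^*$; the \'etale comparison, the rational computation of \cite{CDN3} and the rigidity result \cite{GKir} identify $H^i[\tfrac1\mu]$ with ${\rm Sp}_i(\zp)^*\widehat\otimes_{\zp}A_{\rm inf}[\tfrac1\mu]$; and the Hyodo--Kato specialization pins down the Frobenius. A $p$-torsion-free $A_{\rm inf}$-module with these prescribed $\theta$-reduction, $\mu$-localization and Frobenius is forced, by the Beauville--Laszlo type gluing underlying the classification of Breuil--Kisin--Fargues modules (applied level by level along the exhaustion), to be $A_{\rm inf}\widehat\otimes_{\zp}{\rm Sp}_i(\zp)^*$. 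The $\phi^{-1}$-equivariance and the $G\times\mathcal{G}_K$-equivariance of the isomorphism then follow from the corresponding structures on $H^i_{\dr}$ and $H^i_{\hk}$ computed in \cite{CDN3}: the $G$-action is functorial, the $\mathcal{G}_K$-action on ${\rm Sp}_i(\zp)^*$ is trivial since the de Rham cohomology is already defined over $\mathcal{O}_K$, and the Frobenius becomes trivial after the twist $\{i\}$ precisely because $\mathfrak{X}$ is ordinary. Finally $H^i=0$ for $i>d$, being inherited from $H^i_{\dr}(\mathfrak{X}/\mathcal{O}_C)=0$.

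\emph{Main difficulty.} The real obstacle is the non-quasi-compactness of $\mathfrak{X}$: the $A\Omega$-formalism and its comparison theorems are engineered for proper (or at least quasi-compact) formal schemes, so finiteness, the global base-change isomorphisms, and --- most delicately --- the commutation of $\rg(\mathfrak{X},-)$ with $-\otimes^{\mathbf{L}}_{A_{\rm inf}}\mathcal{O}_C$, with the crystalline specialization, and with $-[\tfrac1\mu]$ must be re-established through the exhaustion $\{\mathfrak{X}_n\}$ while keeping track of the natural topologies --- the cohomology groups being no longer finitely generated $A_{\rm inf}$-modules but completed tensor products. Checking that these isomorphisms persist as \emph{topological} isomorphisms, and that no $\lim^1$-obstruction appears, is exactly the point where the precise, explicit information about the restriction maps $\mathfrak{X}_{n+1}\to\mathfrak{X}_n$ on cohomology from \cite{CDN3} is indispensable.
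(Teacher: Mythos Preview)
Your strategy has a real gap, and the paper's approach is quite different.

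The central issue is the step where you invoke ``the BMS dichotomy'' to conclude that each $H^i(\mathfrak{X}_{n,\eet},\ao)$ is a finite free $A_{\rm inf}$-module with the stated specializations. That argument in \cite{BMS1}, \cite{CK} rests on knowing in advance that $\rg(\mathfrak{X}_n,\ao)$ is a \emph{perfect} $A_{\rm inf}$-complex, which is established there via properness (almost-finiteness of pro-\'etale cohomology plus coherent-cohomology finiteness). Your proposal to ``combine the sheaf-level input with the explicit computations of \cite{CDN3}'' does not supply this: \cite{CDN3} computes de Rham and Hyodo--Kato cohomology, not $A_{\rm inf}$-cohomology, and $p$-torsion-freeness of $H^*_{\dr}(\mathfrak{X}_n)$ alone does not force perfectness of $\rg(\mathfrak{X}_n,\ao)$ over $A_{\rm inf}$. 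The $\mu$-localization comparison you write down is even more problematic: it requires commuting $\rg(\mathfrak{X}_n,-)$ with a filtered colimit, which fails without a finiteness input. Finally, your identification via ``Beauville--Laszlo gluing / BKF classification'' is close to circular: feeding the \'etale comparison into it would require the integral \'etale cohomology of the non-proper pieces $X_n$, which is not available and is essentially what the whole paper is trying to compute.

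The paper proceeds along an entirely different route, sketched already in the introduction. One first constructs an \emph{explicit} candidate map
$$r_{\rm inf}:\ A_{\rm inf}\widehat{\otimes}_{\zp}{\rm Sp}_i(\zp)^*\ \longrightarrow\ H^i_{\eet}(\mathfrak{X},\ao\{i\})$$
by interpreting ${\rm Sp}_i(\zp)^*$ as a quotient of the space of $\zp$-valued measures on $\mathcal{H}^{i+1}$, building an \'etale regulator $r_{\eet}$ by integrating cup-products of Kummer classes of the functions $\ell_{H_j}/\ell_{H_0}$ against these measures, and composing with the canonical map $\gamma: H^i_{\eet}(X,\zp(i))\to H^i_{\eet}(\mathfrak{X},\ao\{i\})$. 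Both source and target of $r_{\rm inf}$ are derived $\tilde{\xi}$-complete (the target because $\ao$ is, and this is preserved by pushforward and passage to cohomology), so by the derived Nakayama lemma it suffices to check that $r_{\rm inf}\otimes^{\rm L}_{A_{\rm inf}}A_{\rm inf}/\tilde{\xi}$ is a quasi-isomorphism. Here only the \emph{Hodge--Tate} specialization $\ao/\tilde{\xi}\simeq\wt{\Omega}_{\mathfrak{X}}$ is used: combined with Grosse-Kl\"onne's acyclicity \eqref{GK00} and the isomorphisms $H^t(\wt{\Omega}_{\mathfrak{X}}\{i\})\simeq\Omega^t_{\mathfrak{X}}\{i-t\}$, the local--global spectral sequence degenerates and identifies the naive reduction $\overline{r}_{\rm inf}$ with the Hodge--Tate regulator $r_{\htt}:\mathcal{O}_C\widehat{\otimes}_{\zp}{\rm Sp}_i(\zp)^*\to H^0(\mathfrak{X},\Omega^i_{\mathfrak{X}})$, already shown to be a topological isomorphism in \cite{CDN3}. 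The same spectral-sequence argument shows, as a by-product, that $H^j_{\eet}(\mathfrak{X},\ao\{i\})$ is $\tilde{\xi}$-torsion-free for all $j$, so the derived and naive reductions mod $\tilde{\xi}$ agree. No exhaustion, no Mittag--Leffler, no BKF classification, and no $\mu$-localization enter; the $\phi^{-1}$-equivariance is automatic because $r_{\rm inf}$ is $A_{\rm inf}$-linear and factors through $r_{\eet}$, which lands in the $\phi^{-1}$-fixed part.
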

Theorem \ref{key11} is now obtained from this and the description of $p$-adic nearby cycles  in \cite{BMS2} in terms of $\ao$ (a twisted version of the Artin-Schreier exact sequence): an exact sequence
\begin{equation} \label{evening1}
0\to {H^{i-1}_{\eet}(\calx, \ao\{i\})}/(1-\phi^{-1})\to H^i_{\eet}(X, \zp(i))\to H^i_{\eet}(\calx, \ao\{i\})^{\phi^{-1}=1}\to 0.
\end{equation}

\medskip                     
\noindent
{\em Proof of Theorem \ref{key2}.} We end the introduction by briefly explaining the key steps in the proof of 
Theorem~\ref{key2}. Fix $i\geq 0$ and write for simplicity 
$M=H^i(\calx, \ao\{i\})$.  This is an $A_{\rm inf}$-module, which is derived $\tilde{\xi}$-complete, for $\tilde{\xi}=\phi(\mu)/\mu$. 
                                    
In the first step, we interpret (following Schneider-Stuhler \cite{SS} and Iovita-Spiess \cite{IS}) ${\rm Sp}_i(\zp)^*$ as a suitable quotient of the space of $\zp$-valued measures on $\mathcal{H}^{i+1}$ (recall that $\mathcal{H}$ is the space of $K$-rational hyperplanes
in $K^{d+1}$). This allows us to construct an  \'etale  regulator (an "integration of \'etale symbols") map 
$$ r_{\eet}: {\rm Sp}_i(\zp)^*\to H^i_{\eet}(X, \Z_p(i)) $$
which induces a regulator map 
\begin{equation}
\label{intro-2}r_{\inf}: A_{\rm inf}\widehat{\otimes}_{\zp} {\rm Sp}_i(\zp)^*\to H^i_{\eet}(\mathfrak{X}, \ao\{i\}).
\end{equation}
        
  To prove that $r_{\inf}$ is an isomorphism we use the derived Nakayama Lemma:  since both sides of (\ref{intro-2}) are derived  $\tilde{\xi}$-complete it suffices to show that $r_{\inf}$  is a quasi-isomorphism when reduced modulo $\tilde{\xi}$ (in the derived sense). That is, that the morphism
$$
r_{\inf}\otimes^{\rm L}\id_{A_{\rm inf}/\tilde{\xi}}: (A_{\rm inf}\widehat{\otimes}_{\zp} {\rm Sp}_i(\zp)^*)\otimes_{A_{\rm inf}}^{\rm L}(A_{\rm inf}/\tilde{\xi})\to H^i_{\eet}(\mathfrak{X}, \ao\{i\})\otimes_{A_{\rm inf}}^{\rm L}(A_{\rm inf}/\tilde{\xi})
$$
is a quasi-isomorphism. 
         To compute  the naive reduction $\overline{r}_{\inf}$ modulo $\tilde{\xi}$ of (\ref{intro-2}) we use the Hodge-Tate specialization of 
                  $\ao$, which  identifies  $H^i(\ao/\tilde{\xi})$ with the (twisted) sheaf of $i$'th logarithmic  differential forms on $\mathfrak{X}$. And, globally, those  are well controlled by the acyclicity result  (\ref{GK00}).
     Combined with  a compatibility between the \'etale and the Hodge-Tate Chern class maps  and the Hodge-Tate specialization this implies  that    $\overline{r}_{\inf}$ is isomorphic to 
          the Hodge-Tate regulator
         $$
  r_{\htt}: \so_C\widehat{\otimes}_{\zp} {\rm Sp}_i(\zp)^*\to H^0_{\eet}(\calx, \Omega^i_{\calx}).
         $$
         And this we have shown to be an isomorphism in \cite{CDN3}. 
         
         Along the way we also compute that the target $H^i_{\eet}(\mathfrak{X}, \ao\{i\})$  of $r_{\rm inf}$ is  $\tilde{\xi}$-torsion free. Since the domain $A_{\rm inf}\widehat{\otimes}_{\zp} {\rm Sp}_i(\zp)^*$ of $r_{\inf}$ is also $\tilde{\xi}$-torsion free this shows that $r_{\inf}\otimes^{\rm L}\id_{A_{\rm inf}/\tilde{\xi}}\simeq \overline{r}_{\rm inf}$ and hence, by the above,  it is a quasi-isomorphism, as wanted.
                        
\begin{acknowledgments}W.N. would like to thank MSRI, Berkeley, for hospitality during Spring 2019 semester when parts of this paper were written. We would like to thank 
Bhargav Bhatt for suggesting that derived completions could simplify our original proof (which they did !). We thank K\k{e}stutis \v{C}esnavi\v{c}ius and Matthew Morrow for helpful discussions related to the subject of this paper.
 \end{acknowledgments}

\noindent
{\em Notation and conventions.} Throughout the paper $p$ is a fixed prime. $K$ is a   finite extension of $\qp$ with the ring of integers $\so_K$; $C$ is the $p$-adic completion of  an algebraic closure $\ovk$ of $K$. 

 All formal schemes are $p$-adic.    A   formal scheme  over $\so_K$ is called {\em semistable} if,  locally for the Zariski topology,  it admits \'etale maps to the formal spectrum 
    $\Spf(\so_K\{X_1,\ldots,X_n\}/(X_1\cdots X_r-\varpi))$, $1\leq r\leq n$, where $\varpi$ is a uniformizer of $K$. We equip it with the log-structure coming from the special fiber.

  If $A$ is a ring and $f\in A$ is a regular element (i.e.,  nonzero divisor) and $T\in D(A)$, we will often write $T/f$ for $T\otimes^{\rm L}_AA/f$ if there is no confusion.

    \section{Preliminaries} 

\subsection{Derived completions and the d\'ecalage functor}
\subsubsection{Derived completions.}\label{derived-complete}
We will need the following derived version of completion\footnote{The terminology here  is misleading. The derived $I$-completion is not given by $M\mapsto \holim_n(M\otimes^{\rm L}_AA/I^n)$, as one would naturally guess.}:
\begin{definition}{\rm (\cite[091S]{SP})}
Let $I$ be an ideal of a ring $A$. We say that $M\in D(A)$ is {\em derived $I$-complete} if for all $f\in I$ $$
\holim(\cdots\to M\stackrel{f}{\to} M\stackrel{f}{\to}M\stackrel{f}{\to} M)=0.
$$
\end{definition}

Let $A$ be a ring and let $I\subset A$ be an ideal. We list the following basic properties of derived completions \cite[091N]{SP}:
\begin{enumerate}
\item Let $M$ be an $A$-module.  If $M$ is classically $I$-complete, i.e., the map $M\to \invlim_nM/I^n$ is an isomorphism,  then $M$ is also derived $I$-complete; the converse is true if $M$ is $I$-adically separated.
\item The collection of all derived $I$-complete $A$-complexes forms a full triangulated subcategory of $D(A)$.
\item  $M\in D(A)$ is derived $I$-complete if and only if so are its cohomology groups $H^i(M)$, $i\in \Z$.
\item ({\em Derived Nakayama Lemma}) A derived $I$-complete complex $M\in D(A)$ is $0$ if and only if $M\otimes^{\rm L}_{A}A/I\simeq 0$.
\item If $I$ is generated by $x_1,...,x_n\in A$, then $M\in D(A)$ is derived $I$-complete if and only if 
$M$ is derived $(x_i)$-complete for $1\leq i\leq n$. 
\item If $f$ is a morphism of ringed topoi, the functor $\R f_*$ commutes with derived completions \cite[0944]{SP}.
\end{enumerate}

   \subsubsection{The Berthelot-Deligne-Ogus d\'ecalage functor.}
    
    For any ring 
    $A$ and any regular element $f\in A$ there is a functor $\LL\eta_f: D(A)\to D(A)$ (which is \textbf{not} exact) with the key property \cite[Lemma 6.4]{BMS1} that there is a functorial isomorphism\footnote{Depending on $f$, not only on the ideal $fA$. If we want to avoid this, the "correct" isomorphism is 
    $$H^i(\LL\eta_f(T))\simeq (H^i(T)/H^i(T)[f])\otimes_{A} (f^i),$$
    where $(f^i)\subset A[1/f]$ is the fractional $A$-ideal generated by $f^i$.}
    $$H^i(\LL\eta_f(T))\simeq H^i(T)/(H^i(T)[f]),$$
    where $M[f]:=\{x\in M|\, fx=0\}$. Concretely, choose a representative 
 $T^{\bullet}$ of $T\in D(A)$ such that $T^i[f]=0$ for all $i$, and consider the sub-complex
 $\eta_f(T^{\bullet})\subset T^{\bullet}[1/f]$ defined by 
    $$\eta_f(T^{\bullet})^i=\{x\in f^iT^i| \, dx\in f^{i+1}T^{i+1}\}.$$
    Its image $\LL\eta_{f}(T)$ in $D(A)$ depends only on $T$. 
      
      We list the following properties of the above construction (sometimes extended naturally to ringed topoi):

 \begin{enumerate}
 \item $\LL\eta_f$ commutes with truncations and with restriction of scalars\footnote{The latter means that $\alpha_*(\LL\eta_{\alpha(f)}(M))\simeq \LL\eta_f(\alpha_*M)$ for $M\in D(B)$ and $\alpha: A\to B$ a map of rings such that $\alpha(f)\in B$ is regular.}. Moreover, 
 $\LL\eta_f(\LL\eta_g(T))\simeq \LL\eta_{fg}(T)$ for $f,g\in A$ regular elements and $T\in D(A)$. 
   \item For all $T\in D(A)$, we have $\LL\eta_f(T)[1/f]\simeq T[1/f]$ and there is a canonical isomorphism
    $$\LL\eta_f(T)/f= \LL\eta_f(T)\otimes_{A}^{L} A/f\simeq (H^*(T/f), \beta_f),$$
    where $(H^*(T/f), \beta_f)$ is the Bockstein complex equal to $H^i( T\otimes_{A}^{\rm L} (f^iA/f^{i+1}A))$ in degree $i$, the differential being the boundary map associated to the triangle 
    $$ T\otimes_{A}^{\rm L} (f^{i+1}A/f^{i+2}A)\to T\otimes_A^{\rm L} (f^iA/f^{i+2}A)\to T\otimes_A^{\rm L} (f^iA/f^{i+1}A).$$
   This is discussed in \cite[Chapter 6]{BMS1} and  \cite[Lemma 5.9]{Bha}. 
    
    \item If $T\to L\to M$ is an exact triangle in $D(A)$, then 
    $\LL\eta_f(T)\to \LL\eta_f(L)\to \LL\eta_f(M)$ is also an exact triangle \textbf{if} the boundary map 
    $H^i(M/f)\to H^{i+1}(T/f)$ is the zero map for all $i$. For a regular element 
    $g\in A$ and $T\in D(A)$, the natural map 
    $\LL\eta_f(T)/g\to \LL\eta_f(T/g)$ is an isomorphism \textbf{if} $H^*(T/f)$ has no $g$-torsion. 
    
    \item If $I\subset A$ is a finitely generated ideal and $T\in D(A)$ is derived $I$-complete, then so is 
    $\LL\eta_f(T)$ \cite{BMS1}, \cite[Lemma 5.19]{Bha}.
    
    \item If $T\in D^{[0,d]}(A)$ and $H^0(T)$ is $f$-torsion-free then there are natural maps
    $\LL\eta_f(T)\to T$ and $T\to \LL\eta_f(T)$ whose compositions are 
    $f^d$. More precisely, if $T^{\bullet}$ is a representative concentrated in degrees $0,\ldots ,d$ and
    with $f$-torsion-free terms, then the first map is induced by $\eta_f(T^{\bullet})\subset T^{\bullet}$. Multiplication by $f^d$ on each of the two complexes factors over this inclusion map. When $T\in D^{\geq 0}(A)$, we will refer to 
    the map $\LL\eta_f(T)\to T$ as the \it{canonical map}.
     \end{enumerate}

  \subsection{The complexes $\ao$ and $\wt{\Omega}_{\calx}$} 
  \subsubsection{Fontaine rings.} Let 
   $$\O_{C}^{\flat}:=\varprojlim_{x\mapsto x^p} \O_{C}\simeq \varprojlim_{x\mapsto x^p} \O_{C}/p$$ be the tilt of 
   $\O_C$ (so that $C^{\flat}={\rm Frac}(\O_C^{\flat})$ is an algebraically closed field of characteristic $p$). 
   Let $A_{\rm inf}=W(\O_{C}^{\flat})$ and choose once and for all 
    a compatible sequence $(1,\zeta_p, \zeta_{p^2},...)$ of primitive $p$-power roots of $1$, giving rise to
    $\varepsilon=(1,\zeta_p, \zeta_{p^2},...)\in \O_{C}^{\flat}$. Letting $\varphi$ be the natural Frobenius automorphism of $A_{\rm inf}$, define
      $$\mu:=[\varepsilon]-1, \quad \xi:=\frac{\mu}{\varphi^{-1}(\mu)}=\frac{[\varepsilon]-1}{[\varepsilon^{1/p}]-1}\in A_{\rm inf}.$$
      The natural surjective map $\O_{C}^{\flat}\to \O_C/p$ lifts to a 
      map $\theta: A_{\rm inf}\to \O_C$ with kernel generated by $\xi$; the map $\theta$, in turn, lifts to a map $\theta_{\infty}: A_{\rm inf}\to W(\so_C)$ with kernel generated by $\mu$ (however, contrary to $\theta$, $\theta_{\infty}$ is not always surjective, see \cite[Lemma 3.23]{BMS1}). The kernel of the twisted map $\wt{\theta}:=\theta\phi^{-1}: A_{\rm inf}\to \O_C$
      is generated by 
        $$
      \tilde{\xi}:=\phi(\xi)=\frac{\phi(\mu)}{\mu}=\frac{[\varepsilon^p]-1}{[\varepsilon]-1}.
      $$
      We have $\tilde{\theta}(\mu)=\zeta_p-1$.
      
 We list the following properties \cite[2.25]{Bha} 
     \begin{enumerate}
     \item 
     
     $\tilde{\xi}$ modulo $\mu$ is equal to $p$.
     \item Since $A_{\rm inf}$ and its reduction mod $p$ are integral domains and 
     since $\xi, \tilde{\xi}, \mu$ are not $0$ modulo $p$, 
     $(p,\xi), (p,\tilde{\xi}), (p,\mu)$ are regular sequences, and so is the sequence
     $(\tilde{\xi}, \mu)$.
     
          \item The ideals $(p,\xi)$, $(p,\tilde{\xi})$, and $(\tilde{\xi},\mu)$  define the same topology on $A_{\rm \inf}$.
     \end{enumerate}

      The above  constructions naturally generalize to the case when $\O_C$ is replaced by a perfectoid ring.
         
              \subsubsection{Modified Tate twists.} The compatible sequence of roots of unity $(\zeta_{p^n})_{n}$ gives a trivialization
              $\zp(1)\simeq \zp$, and we will write $\zeta=(\zeta_{p^n})_{n}$ for the corresponding basis of $\zp(1)$.
                   By Fontaine's theorem \cite{Fo82}, the $\O_C$-module
    $$\O_C\{1\}:=T_p(\Omega^1_{\O_C/\zp})$$
 is free of rank $1$ and the natural map 
    ${\rm dlog}: \mu_{p^{\infty}}\to \Omega^1_{\O_C/\zp}$ induces an $\O_C$-linear injection 
    $${\rm dlog}: \O_C(1)\to \O_C\{1\}, \quad {\rm dlog}(\zeta)=({\rm dlog}(\zeta_{p^n}))_{n\geq 1}.$$
  The $\O_C$-module $\O_C\{1\}$ is generated by $$\omega:=\frac{1}{\zeta_p-1}{\rm dlog}(\zeta),$$ thus the annihilator of ${\rm coker}({\rm dlog})$ is $(\zeta_p-1)$. For any $\O_C$-module $M$, let $M\{1\}:=M\otimes_{\O_C} \O_C\{1\}$, and we will often write 
  $m\{1\}$ for the element of $M\{1\}$ corresponding to $m\in M$ (in particular,  $a\{1\}=a\cdot \omega$ in  $\O_C\{1\}$).

Finally, define 
$$A_{\rm inf}\{1\}:=\frac{1}{\mu}A_{\rm inf}(1),$$
and let $a\{1\}=\frac{1}{\mu}a(1)\in A_{\rm inf}(1)$, if $a\in A_{\rm inf}$.
The Frobenius $\varphi$ on $W(C^\flat)(1)$ induces an isomorphism
$$  \phi: A_{\rm inf}\{1\}\stackrel{\sim}{\to} (1/\tilde{\xi})A_{\rm inf}\{1\}.$$
Its inverse defines a map 
$$
\phi^{-1}: A_{\rm inf}\{1\}\to A_{\rm inf}\{1\}.
$$
There is a natural map 
$$\tilde{\theta}:=\theta\circ \varphi^{-1}: A_{\rm inf}\{1\}\to \O_C\{1\}$$
sending $a\{1\},$  for $a\in A_{\rm inf}$, to ${\theta}(\varphi^{-1}(a))\,\omega$.
     
If $M$ is an $A_{\rm inf}$-module, let 
$M\{i\}:=M\otimes_{A_{\rm inf}} A_{\rm inf}\{1\}^{\otimes i}$, $i\in\Z$.
The map
$\tilde{\theta}: A_{\rm inf}\{1\}\to \O_C\{1\}$ induces a map 
$\tilde{\theta}: M\{1\}\to (M/\tilde{\xi})\{1\}$ of $A_{\rm inf}$-modules (via the map
$A_{\rm inf}\to A_{\rm inf}/\tilde{\xi}$).

 \subsubsection{The complexes $\ao$ and $\wt{\Omega}_{\calx}$.}

   Let $\calx$ be a flat formal scheme
   over $\OC$, with smooth generic fibre $X$, seen as an adic space over $C$. There is a natural morphism of sites
   $$\nu: X_{\proet}\to \calx_{\eet},$$
  as well as a sheaf $\Ainf:=\mathbb{A}_{\rm inf, X}:=W(\varprojlim_{\varphi} \O_{X}^+/p)$ of $A_{\rm inf}$-modules on $X_{\proet}$, obtained by sheafifying Fontaine's construction $R\to A_{\rm inf}(R)$ on the basis of affinoid perfectoids of $X_{\proet}$. This sheaf is endowed with a bijective Frobenius $\varphi$ as well as with a surjective map 
   $$\theta: \Ainf\to \widehat{\O}^+_X:=\varprojlim \O_X^+/p^n,$$
   with kernel generated by the non-zero divisor $\xi$.

    Define 
    $$\ao:=\LL\eta_{\mu}(\R\nu_*\mathbb{A}_{\rm inf, X})\in D^{\geq 0}(\calx, A_{\rm inf}),$$
    a commutative algebra object, as well as 
     $$\wt{\Omega}_{\calx}:=\LL\eta_{\zeta_p-1}(\R\nu_* \widehat{\O}^+_X)
\in D(\calx_{\eet}),$$
     a commutative $\O_{\calx}$-algebra object in $D(\calx_{\eet})$.

  \subsection{The Hodge-Tate and de Rham specializations}
  
  \subsubsection{The smooth case.}
   
     Suppose first that $\calx$ is {smooth} over $\O_C$.
      The following result is proved in 
     \cite{BMS1} (for the Zariski site, but the proof is identical in our case).
     
     \begin{theorem}{\rm (Bhatt-Morrow-Scholze, \cite[Th. 8.3]{BMS1})}\label{BMS Omega} There is a natural isomorphism 
    of $\O_{\calx}$-modules on $\calx_{\eet}$
    $$H^i(\wt{\Omega}_{\calx})\simeq \Omega^i_{\calx/\O_C}\{-i\}.$$
      \end{theorem}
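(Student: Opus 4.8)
The plan is to prove the Hodge--Tate specialization $H^i(\wt{\Omega}_{\calx})\simeq \Omega^i_{\calx/\O_C}\{-i\}$ by combining the general $\LL\eta$-formalism recalled above with the relative Hodge--Tate comparison in the pro-\'etale topology. First I would note that since $H^i(\wt\Omega_\calx)$ is computed locally, and $\wt\Omega_\calx=\LL\eta_{\zeta_p-1}(\R\nu_*\wh\O^+_X)$, the key input is to identify the cohomology sheaves $\R^j\nu_*\wh\O^+_X$ on $\calx_{\eet}$. By the primitive comparison / Hodge--Tate decomposition for smooth adic spaces (Scholze), one has $\R^j\nu_*\wh\O_X\simeq \Omega^j_{X/C}(-j)$, and one wants the integral refinement: the natural map $\Omega^j_{\calx/\O_C}\{-j\}\to \R^j\nu_*\wh\O^+_X$ is injective with cokernel killed by $\zeta_p-1$ (equivalently, by a power of $p$, uniformly). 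This is where the $\O_C$-module $\O_C\{1\}=T_p(\Omega^1_{\O_C/\zp})$ and the computation that $\coker(\dlog)$ is annihilated exactly by $(\zeta_p-1)$ enters: locally, using a toric chart and the explicit computation of the pro-\'etale cohomology of a torus (the Koszul complex on the $\gamma_i-1$ acting on $\wh\O^+_X$), one gets that $\R^j\nu_*\wh\O^+_X$ is, up to a $(\zeta_p-1)$-torsion error, the free $\O_\calx$-module $\bigwedge^j$ of the module generated by the $\dlog T_i$, i.e. $\Omega^j_{\calx/\O_C}\{-j\}$.

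Next, with $T=\R\nu_*\wh\O^+_X$ and $f=\zeta_p-1$ in hand, I would invoke property (2) of the d\'ecalage functor, which gives
$$
H^i(\LL\eta_f(T))\simeq H^i(T)/H^i(T)[f].
$$
So it remains to see that $H^i(T)/H^i(T)[f]\simeq \Omega^i_{\calx/\O_C}\{-i\}$. From the previous paragraph we have an exact sequence of $\O_\calx$-modules
$$
0\to \Omega^i_{\calx/\O_C}\{-i\}\to H^i(T)\to Q_i\to 0
$$
with $Q_i$ killed by $f=\zeta_p-1$. The map $H^i(T)[f]\to Q_i$ need not be surjective a priori, so the clean way is to work locally and exhibit $H^i(T)$ explicitly as $\Omega^i_{\calx/\O_C}\{-i\}\oplus(\text{torsion})$, or better, to observe that the submodule $\Omega^i_{\calx/\O_C}\{-i\}$ is $f$-torsion-free and maps isomorphically onto the quotient $H^i(T)/H^i(T)[f]$ precisely because locally $H^i(T)$ is, via the Koszul description, a direct sum of $\Omega^i_{\calx/\O_C}\{-i\}$ and copies of $\O_\calx/(\zeta_p-1)^{\bullet}$ shifted by the appropriate twist — the torsion part being exactly the part one divides out by $\eta_f$. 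Glueing these local descriptions (the identification is canonical, being induced by $\dlog$ on symbols and functoriality of $\R\nu_*$) yields the global isomorphism of $\O_\calx$-modules, with the twist $\{-i\}$ arising because the Frobenius-twisted period $\mu$ (resp.\ $\zeta_p-1=\tilde\theta(\mu)$) carries the Tate twist into $\O_C\{1\}$.

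The main obstacle I expect is the \emph{integral} local computation: controlling the pro-\'etale cohomology sheaves $\R^j\nu_*\wh\O^+_X$ on the nose, i.e.\ identifying their torsion precisely (not just after inverting $p$), and checking that the torsion subsheaf is exactly what $\LL\eta_{\zeta_p-1}$ kills so that no spurious torsion survives in $H^i(\wt\Omega_\calx)$. Over $C$ after inverting $p$ this is standard, but integrally it requires the explicit Koszul/almost-purity analysis of $\wh\O^+_X$ along a smooth toric chart, together with the sharp statement that $\coker(\dlog\colon\O_C(1)\to\O_C\{1\})$ is annihilated exactly by $(\zeta_p-1)$, in order to match the $f=\zeta_p-1$ chosen in the definition of $\wt\Omega_\calx$. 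Once this local model computation is done, glueing and functoriality are routine, and the statement for our non-quasi-compact $\calx$ follows since everything is local on $\calx_{\eet}$; properness or quasi-compactness is never used here.
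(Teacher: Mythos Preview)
Your outline follows the same local strategy as the paper (which recalls the proof from \cite{BMS1}): work over a small algebra $R$ with a toric framing, use almost purity and the explicit Koszul description of $H^i(\Gamma,R_\infty)$, and apply the formula $H^i(\LL\eta_f T)\simeq H^i(T)/H^i(T)[f]$ with $f=\zeta_p-1$ together with the decompletion $H^i(\Gamma,R_\infty)/(\zeta_p-1\text{-torsion})\simeq H^i(\Gamma,R)$. So the skeleton is correct.

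However, there is a genuine gap where you write ``Glueing these local descriptions (the identification is canonical, being induced by $\dlog$ on symbols and functoriality of $\R\nu_*$) yields the global isomorphism'' and call this ``routine''. The paper explicitly flags this as the hard step: the Koszul identification $H^i(\Gamma,R)\simeq \Omega^i_{R/\O_C}\{-i\}$ a priori depends on the chosen framing $A\to R$, and showing it is independent of this choice is precisely the content of \cite[Prop.~8.15]{BMS1}. The paper's route is: (a) for $i=1$, construct the isomorphism $H^1(\wt\Omega_R)\simeq \Omega^1_{R/\O_C}\{-1\}$ canonically via the completed cotangent complex (not via $\dlog$ on a chosen basis), and only afterwards check that under a framing it agrees with the explicit formula $\alpha(dT/T)=(\gamma\mapsto 1\otimes\dlog(\zeta_\gamma))$; (b) for general $i$, do \emph{not} repeat the local computation, but instead show that cup product induces $\wedge^i H^1(\wt\Omega_R)\stackrel{\sim}{\to} H^i(\wt\Omega_R)$, which then gives $H^i(\wt\Omega_R)\simeq \Omega^i_{R/\O_C}\{-i\}$ canonically from the $i=1$ case. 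Your proposed direct-sum splitting ``$\Omega^i\{-i\}\oplus(\text{torsion})$'' exists, but is non-canonical and does not glue; this is exactly why the cotangent complex and cup-product detour are needed. If you want to salvage your approach, the missing ingredient is a coordinate-free construction of the map $\Omega^1_{\calx/\O_C}\{-1\}\to H^1(\wt\Omega_\calx)$ before any local splitting is invoked.
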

      
     We will recall the key relevant points since we will need 
some information about the construction of this isomorphism.
 
 Let $R$ be a formally smooth $\O_C$-algebra, such that ${\rm Spf}(R)$ is connected, together with an \'etale map
       $A:=\O_C\{T_1^{\pm 1},\ldots , T_d^{\pm 1}\}\to R$. We will simply say that $R$ is a {\it small algebra} and call the map $A\to R$ a {\it framing}. Let 
       $\widehat{\overline{R}}$ be the (perfectoid) completion of the normalization
       $\overline{R}$ of $R$ in the maximal pro-finite \'etale extension of $R[1/p]$, and let 
       $\Delta:={\rm Gal}(\overline{R}[1/p]/R[1/p])$. Define 
       $$A_{\infty}:=\O_C\{T_1^{\pm 1/p^{\infty}},\ldots , T_d^{\pm 1/p^{\infty}}\}, \quad R_{\infty}=R\widehat{\otimes}_A A_{\infty}.$$
       We have $\Gamma:={\rm Gal}(R_{\infty}/R)\simeq \zp(1)^d=\oplus_{i=1}^d \zp \gamma_i$, where 
       $\gamma_i$ sends $T_i^{1/p^n}$ to $\zeta_{p^n}T_i^{1/p^n}$ and 
fixes $T_j^{1/p^n}$ for $j\ne i$. 
By the almost purity theorem of Faltings, the 
   natural map (group cohomology is always continuous below) $\rg(\Gamma, R_{\infty})\to \rg(\Delta, \widehat{\overline{R}})$ is an almost quasi-isomorphism.
We have the following more precise results:

\begin{theorem}
{\rm (Bhatt-Morrow-Scholze, \cite[Cor. 8.13, proof of prop. 8.14]{BMS1})}
 Let $R$ be a small algebra together with a framing, as above, and let $X={\rm Sp}(R[1/p])$ and $\calx={\rm Spf}(R)$. 
 
{\rm a)} The natural maps
$$\LL\eta_{\zeta_p-1} \R\Gamma(\Gamma, R_{\infty})\to \LL\eta_{\zeta_p-1}\R\Gamma(X_{\proet}, \widehat{\O}_X^+)\to \R\Gamma(\calx, \wt{\Omega}_{\calx})$$
are quasi-isomorphisms. 

{\rm b)} Writing $\wt{\Omega}_R$ for any of these objects, the map
$\wt{\Omega}_R\otimes_R \so_{\calx}\to \wt{\Omega}_{\calx}$ is a quasi-isomorphism in $D(\calx_{\eet})$.

{\rm c)} If $R\to S$ is a formally \'etale map of small algebras, the natural map 
$\wt{\Omega}_R\otimes_{R}^{L} S\to \wt{\Omega}_{S}$
is a quasi-isomorphism. 

\end{theorem}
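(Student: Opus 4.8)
The plan is to establish the three statements a), b), c) in sequence, extracting the needed input from the décalage formalism recalled above together with the key acyclicity/finiteness features of small algebras. Throughout, write $\wt{\Omega}_R:=\LL\eta_{\zeta_p-1}\R\Gamma(\Gamma, R_{\infty})$ for the leftmost object; this is the working model.

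\textbf{Part a).} First I would invoke the almost purity theorem to see that the natural map $\R\Gamma(\Gamma, R_\infty)\to \R\Gamma(\Delta, \wh{\overline{R}})=\R\Gamma(X_{\proet},\wh{\O}_X^+)$ is an almost quasi-isomorphism. The point is that although $\LL\eta$ does not preserve almost quasi-isomorphisms in general, it does here because of a Tate-twist trick: applying $\LL\eta_{\zeta_p-1}$ turns the ``almost'' ambiguity (controlled by $\O_C(1)/(\zeta_p-1)$, i.e.\ by powers of the maximal ideal coming from $p^{1/p^\infty}$) into an honest isomorphism, since the cohomology of the Koszul complex $K(\Gamma, R_\infty)=\bigotimes_i (R_\infty\xrightarrow{\gamma_i-1}R_\infty)$ has torsion killed by $\zeta_p-1$ exactly in the range where the décalage divides by $(\zeta_p-1)^i$. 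Concretely, $H^i\R\Gamma(\Gamma,R_\infty)\simeq \bigwedge^i(\bigoplus \zp\gamma_i)\otimes R$ up to bounded $(\zeta_p-1)$-torsion, and $\LL\eta_{\zeta_p-1}$ kills that torsion, giving $H^i(\wt{\Omega}_R)\simeq \Omega^i_{R/\O_C}\{-i\}$ on the nose and matching the target $\R\Gamma(\calx,\wt\Omega_\calx)$ via the definition $\wt\Omega_\calx=\LL\eta_{\zeta_p-1}\R\nu_*\wh\O_X^+$. That the second map is a quasi-isomorphism is then just unwinding that $\R\Gamma(\calx,\R\nu_*\wh\O_X^+)=\R\Gamma(X_{\proet},\wh\O_X^+)$ and that $\LL\eta$ commutes with taking global sections over an affine formal scheme (use property (1) of $\LL\eta$ and that $\calx$ is affine so $\R\Gamma(\calx,-)$ is exact on quasi-coherent sheaves, compatibly with the explicit subcomplex description of $\eta_f$).

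\textbf{Parts b) and c).} For c), given a formally étale map $R\to S$ of small algebras, I would choose compatible framings and note $S_\infty=S\wh\otimes_R R_\infty$ with $\Gamma$-action, whence $\R\Gamma(\Gamma,S_\infty)\simeq \R\Gamma(\Gamma,R_\infty)\otimes_R S$ since $R\to S$ is flat and the Koszul complexes base-change. Now I want $\LL\eta_{\zeta_p-1}(\R\Gamma(\Gamma,R_\infty))\otimes_R^L S\xrightarrow{\sim}\LL\eta_{\zeta_p-1}(\R\Gamma(\Gamma,R_\infty)\otimes_R^L S)$; by property (3) of $\LL\eta$ (for the regular element coming from $S$ — or rather, iterating the statement ``$\LL\eta_f(T)/g\simeq\LL\eta_f(T/g)$ when $H^*(T/f)$ is $g$-torsion-free'') this holds provided $H^*(\R\Gamma(\Gamma,R_\infty)/(\zeta_p-1))$ behaves well; but in fact the cleanest route is to observe that after $\LL\eta_{\zeta_p-1}$ the cohomology is the $p$-torsion-free module $\Omega^i_{R/\O_C}\{-i\}$ (by a)), flatness of $R\to S$ gives $\Omega^i_{R/\O_C}\otimes_R S\simeq \Omega^i_{S/\O_C}$ since $\Omega^1_{S/R}=0$, and one checks the comparison map is the one induced on cohomology, hence an isomorphism. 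For b), $\wt\Omega_R\otimes_R\O_\calx\to\wt\Omega_\calx$: here $\O_\calx$ is (essentially) $R$ itself as a sheaf on the affine $\calx_{\eet}$, so b) is really the sheafification statement — both sides are computed on the basis of affine opens $\Spf S\to\Spf R$ with $S$ small and formally étale over $R$, and c) says the presheaf $S\mapsto \wt\Omega_R\otimes_R^L S$ agrees with $S\mapsto\wt\Omega_S$, which sheafifies to the claim; one uses that $\R\nu_*\wh\O_X^+$ and hence $\LL\eta_{\zeta_p-1}\R\nu_*\wh\O_X^+$ satisfy étale descent, and that $\LL\eta$ commutes with the relevant localization by property (1).

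\textbf{Main obstacle.} The delicate point is a): controlling the interaction between the décalage functor $\LL\eta_{\zeta_p-1}$ and the \emph{almost} isomorphism supplied by almost purity. Naively $\LL\eta_f$ is not a functor on the almost category and does not send almost isomorphisms to isomorphisms; the resolution is the observation — due to Bhatt--Morrow--Scholze — that the error term is annihilated by a power of $\mathfrak{m}_C$ that, after the décalage twist built into $\wt\Omega$, becomes an actual equality on cohomology. I would spell this out by passing to the explicit Koszul model $K(\gamma_1-1,\ldots,\gamma_d-1; R_\infty)$, computing its cohomology with its $(\zeta_p-1)$-torsion explicitly via the standard continuous group cohomology of $\zp(1)^d$ acting on $R_\infty=\bigoplus_{a\in\zp[1/p]^d/\zp^d} R\cdot T^a$, and checking that on the subquotient $\eta_{\zeta_p-1}$ selects exactly the integral differential forms. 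All the remaining steps (flat base change of Koszul complexes, torsion-freeness after $\LL\eta$, étale descent, commuting $\LL\eta$ with global sections over an affine) are formal consequences of the properties (1)–(5) of $\LL\eta$ and the derived-completion package recalled in the preliminaries.
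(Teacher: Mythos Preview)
The paper does not supply its own proof of this theorem: it is quoted as background from \cite{BMS1} (Cor.~8.13 and the proof of Prop.~8.14), so there is nothing in the text to compare against directly. Your sketch follows the strategy of \cite{BMS1} and is broadly on target for parts b) and c) and for the first arrow in a).

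There is, however, a genuine gap in your treatment of the \emph{second} arrow in a). You assert that ``$\LL\eta$ commutes with taking global sections over an affine formal scheme''. This is not a general fact: $\LL\eta_f$ is not compatible with pushforward, and property~(1) (commutation with truncations and restriction of scalars) does not help here. The complex $\R\nu_*\widehat{\O}_X^+$ is not known a priori to be a complex with quasi-coherent cohomology sheaves, so the affine-vanishing argument you invoke does not apply to it; and even if it did, passing to the subcomplex $\eta_{\zeta_p-1}$ inside $(-)[1/(\zeta_p-1)]$ does not visibly commute with sheafification or global sections. In \cite{BMS1} the order of the argument is essentially reversed: one first knows (this is the earlier Theorem stated in the paper as \cite[Th.~8.3]{BMS1}) that $H^i(\wt\Omega_{\calx})\simeq \Omega^i_{\calx/\O_C}\{-i\}$ are coherent, hence on the affine $\calx={\rm Spf}(R)$ the hypercohomology spectral sequence degenerates and $H^i\R\Gamma(\calx,\wt\Omega_{\calx})\simeq \Omega^i_{R/\O_C}\{-i\}$. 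One then checks that the comparison map from $\wt\Omega_R$ realizes this identification on each $H^i$; in other words the second arrow in a) is deduced from (a form of) b) together with the coherence of the cohomology sheaves, not from an abstract commutation of $\LL\eta$ with $\R\Gamma$. Your paragraph on b)/c) in fact contains the right ingredients for this, so the fix is to reorganize: prove c), deduce b), and then obtain the second arrow of a) by taking sections and using affine vanishing for the coherent sheaves $\Omega^i_{\calx/\O_C}$.
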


 Note that 
  $$H^i(\wt{\Omega}_R)\simeq H^i(\LL\eta_{\zeta_p-1} \rg(\Gamma, R_{\infty}))\simeq\frac{H^i(\Gamma, R_{\infty})}{H^i(\Gamma, R_{\infty})[\zeta_p-1]}\simeq 
H^i(\Gamma,R),$$
the last isomorphism\footnote{Induced by the natural map $H^i(\Gamma, R)\to H^i(\Gamma, R_{\infty})$.} being a 
standard decompletion result (\cite[Prop. 8.9]{BMS1}).
  
The key result (not obvious since one needs to define the isomorphisms canonically, independent of coordinates!)  is then:
  
\begin{theorem}
{\rm (Bhatt-Morrow-Scholze, \cite[Chapter 8]{BMS1})}\label{tildeomega} Let $R$ be a small algebra. 
    
{\rm a)} There is a natural $R$-linear  isomorphism
$H^1(\wt{\Omega}_R)\simeq \Omega^1_{R/\O_C}\{-1\}$.

{\rm b)} The cup-product maps induce natural $R$-linear isomorphisms
$\wedge^iH^1(\wt{\Omega}_R)\simeq H^i(\wt{\Omega}_R)$
and hence  isomorphisms $H^i(\wt{\Omega}_R)\simeq \Omega^i_{R/\O_C}\{-i\}$.
\end{theorem}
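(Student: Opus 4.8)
The plan is to reduce Theorem~\ref{tildeomega} to the case $i=1$, which is part~a), and then to build the higher-degree isomorphisms by cup product while checking that the cup-product map $\wedge^i H^1(\wt\Omega_R)\to H^i(\wt\Omega_R)$ is an isomorphism. Part~a) itself I would prove by exhibiting a canonical, framing-independent map $\Omega^1_{R/\O_C}\{-1\}\to H^1(\wt\Omega_R)$ and then checking it is an isomorphism after choosing coordinates. Concretely: using the decompletion $H^1(\wt\Omega_R)\simeq H^1(\Gamma,R)$ recalled just before the theorem, and $\Gamma\simeq\bigoplus_{i=1}^d\zp\gamma_i$ with $\gamma_i$ acting trivially on $R$, continuous group cohomology in degree $1$ is $\Hom_{\cont}(\Gamma,R)\simeq R^d$, with the $i$-th basis element the homomorphism sending $\gamma_i\mapsto 1$. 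Under the identification $\Gamma\simeq\zp(1)^d$ coming from the chosen roots of unity, a class in $H^1(\Gamma,R)$ is naturally an element of $R\otimes_{\zp}\Hom(\zp(1)^d,\zp)=R(-1)^d$, and the map $\frac{d T_i}{T_i}\mapsto$ ($\gamma_i\mapsto 1$) gives an $R$-linear isomorphism $\Omega^1_{R/\O_C}(-1)\simeq H^1(\Gamma,R)$; twisting by the canonical generator $\omega$ of $\O_C\{1\}$ turns the $(-1)$ into $\{-1\}$ and, crucially, the $(\zeta_p-1)$-discrepancy between $\O_C(1)$ and $\O_C\{1\}$ matches the $(\zeta_p-1)$-torsion killed by the d\'ecalage functor, so the map is an isomorphism rather than just an isogeny. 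The content of part~a) is precisely that this map does not depend on the framing; I would verify this by writing down the transition on $H^1(\Gamma,R)$ induced by a change of coordinates (a matrix in $\mathrm{GL}_d(R)$ plus a "shift by a unit times dlog") and observing it is intertwined with the usual transition on $\Omega^1_{R/\O_C}$, using that $H^0(\Gamma,R)=R$ carries no interesting cohomology operations.

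Granting part~a), for part~b) I would first produce, for each $i$, a cup-product map
$$\mu_i:\wedge^i_R H^1(\wt\Omega_R)\longrightarrow H^i(\wt\Omega_R),$$
using that $\wt\Omega_R=\LL\eta_{\zeta_p-1}\rg(\Gamma,R_\infty)$ is a commutative algebra object in the derived category (the d\'ecalage functor $\LL\eta$ is lax symmetric monoidal, and $\rg(\Gamma,-)$ of a commutative ring is an $E_\infty$-algebra). Then I would check $\mu_i$ is an isomorphism. This is a statement that can be checked after a (faithfully flat, or simply surjective on $\Spf$) base change, so by Theorem on part c) and the fact that every small algebra is \'etale over the torus algebra $A=\O_C\{T_1^{\pm1},\ldots,T_d^{\pm1}\}$, it suffices to treat $R=A$ itself — and more precisely, since $\LL\eta$ and all the relevant cohomology commute with filtered colimits in the $T$-variables, it reduces to the explicit Koszul computation of $\rg(\zp(1)^d, A_\infty)$. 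There $H^*(\Gamma,A_\infty)$ after applying $\LL\eta_{\zeta_p-1}$ is exactly the exterior algebra $\wedge^*_A(A\cdot\tfrac{dT_1}{T_1}\oplus\cdots\oplus A\cdot\tfrac{dT_d}{T_d})$ with appropriate twists, so $\mu_i$ is visibly an isomorphism; one then transports this back to $H^i(\wt\Omega_R)\simeq\wedge^i\Omega^1_{R/\O_C}\{-i\}=\Omega^i_{R/\O_C}\{-i\}$ via part~a).

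The main obstacle — as the parenthetical remark in the excerpt stresses — is the \emph{canonicity} in part~a): making the isomorphism $H^1(\wt\Omega_R)\simeq\Omega^1_{R/\O_C}\{-1\}$ manifestly independent of the chosen framing, so that the isomorphisms glue over $\calx_{\eet}$ (which then makes Theorem~\ref{BMS Omega} well-posed in the first place). The computation with a fixed frame is routine Koszul/Iwasawa-cohomology bookkeeping; the work is in identifying the source of the map intrinsically. The cleanest route is to not choose coordinates at all in defining the map: use that for any $R$ the dlog of a unit gives a natural class in $H^1(\wt\Omega_R)$ (via the dlog map $\O_X^\times\to$ the Hodge–Tate sheaf, compatibly with $\LL\eta_{\zeta_p-1}$), hence a canonical $R$-linear map from $\Omega^1_{R/\O_C,\log}\{-1\}$, which for small $R$ coincides with $\Omega^1_{R/\O_C}\{-1\}$ because the log structure is trivial; then the framing only enters in checking the map is an \emph{iso}, which is allowed to be coordinate-dependent since being an isomorphism is a property, not a datum. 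I expect the write-up to lean on \cite[Chapter 8]{BMS1} for exactly this point, the remaining steps (cup product, d\'evissage to the torus, Koszul computation) being formal given the properties of $\LL\eta$ listed above.
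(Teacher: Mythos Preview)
Your plan for part~b) --- cup product, reduction to the torus via the base-change property (part~c) of the preceding theorem), and an explicit Koszul computation --- is essentially what BMS1 does and what the paper has in mind.

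For part~a), however, you miss the key device. The paper does not prove this theorem itself but points to BMS1, and immediately after the statement it says explicitly: ``The isomorphism in a) is constructed in \cite[Prop.~8.15]{BMS1} using completed cotangent complexes.'' The point is that there is a \emph{canonical} map built from the completed cotangent complex (ultimately resting on the transitivity triangle for $\zp\to\O_C\to R$ and Fontaine's computation of $L^{\wedge}_{\O_C/\zp}$, which is where the Breuil--Kisin twist $\{-1\}$ enters), and this map has no framing in sight; coordinates are then used only to check that it is an isomorphism --- exactly your slogan ``being an isomorphism is a property, not a datum.'' The explicit formula $\alpha(dT/T)=(\gamma\mapsto 1\otimes\dlog(\zeta_\gamma))$ recorded in the paper after the theorem is a \emph{consequence} of this construction, not its definition.

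Both of your proposed routes to canonicity run into trouble. Checking coordinate-independence by hand is not as routine as you suggest: a change of framing changes $R_\infty$ and $\Gamma$ themselves, so the two group-cohomology models $H^1(\Gamma,R)$ are only related through the common object $\LL\eta_{\zeta_p-1}\rg(X_{\proet},\widehat{\O}^+_X)$, and tracking that identification is essentially redoing the cotangent-complex argument without the organizing principle. Your alternative via $\dlog$ produces only a map $R^\times\to H^1(\wt\Omega_R)$; promoting it to an $R$-linear map out of $\Omega^1_{R/\O_C}$ (i.e.\ showing that relations among logarithmic differentials go to zero) is precisely the universal property of the (completed) cotangent complex, so you are back to the tool you omitted.
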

  
     The isomorphism in a) is constructed  in \cite[Prop. 8.15]{BMS1} using completed cotangent complexes. 
     We will make it explicit, as follows: consider a framing $A\to R$ (recall that 
     $A=\O_C\{T_1^{\pm 1},\ldots , T_d^{\pm 1}\}$). By compatibility with base change from $A$ to $R$ of all objects involved, 
     it suffices to construct the isomorphism for $R=A$. Moreover we may  reduce to describing the isomorphism for 
     $A=\O_C\{T^{\pm 1}\}$, i.e., for $d=1$. Then the  twisted map
     $$\alpha: \Omega^1_{R/\O_C}\simeq H^1(\wt{\Omega}_R)\{1\}\simeq\frac{H^1(\Gamma, R_{\infty})}{H^1(\Gamma, R_{\infty})[\zeta_p-1]}\{1\}
     \xrightarrow{ x\mapsto (\zeta_p-1)x} (\zeta_p-1)H^1(\Gamma, R_{\infty})\{1\}$$
     is an isomorphism, described explicitly by
     $$\alpha\left(\frac{dT}{T}\right)=(\gamma\mapsto 1\otimes {\rm dlog}(\zeta_{\gamma}))=(\gamma\mapsto (\zeta_p-1)\otimes \frac{1}{\zeta_p-1}  {\rm dlog}(\zeta_{\gamma})),$$
     where $\zeta_{\gamma}=(\zeta_{\gamma,n})_n$, for $\gamma\in\Gamma$,  is defined by the formula $\zeta_{\gamma,n}:=\gamma(T^{1/p^n})/T^{1/p^n}$.
     
  \subsubsection{The semistable case.}
   Suppose now that $\calx$ is semistable. This means that, locally for the \'etale topology,
   $\calx={\rm Spf}(R)$, where $R$ admits an \'etale morphism of $\O_C$-algebras
   $$A:=\O_C\{T_0,\ldots ,T_r, T_{r+1}^{\pm 1},\ldots, T_d^{\pm 1}\}/(T_0\cdots T_r-p^q)\to R$$
   for some $d\geq 0$, $r\in \{0,1,\ldots ,d\}$ and some rational number $q>0$ (we fix once and for all an embedding 
   $p^{\Q}\subset C$). Equip $\O_C$ with the log-structure $\O_C\setminus \{0\}\to \O_C$ and 
   $\calx$ with the canonical log-structure, i.e. given by the sheafification of the subpresheaf 
   $\O_{\calx, \eet}\cap (\O_{\calx, \eet}[1/p])^{*}$ of $\O_{\calx, \eet}$. Let 
   $\Omega_{\calx/\O_C}$ be the finite locally free $\O_{\calx}$-module of logarithmic differentials on $\calx$. We have  the following result:
   
   \begin{theorem}
{\rm (\v{C}esna{v}i\v{c}ius-Koshikawa, \cite[Th. 4.2, Cor. 4.6, Prop. 4.8, Th. 4.11]{CK})} \label{CK}
   
{\rm a)} There is a natural $\O_{\calx, \eet}$-module isomorphism 
  $H^1(\wt{\Omega}_{\calx})\simeq \Omega^{1}_{\calx/\O_C}\{-1\}$ whose restriction to the smooth locus
   $\calx^{\rm sm}$ is the one given by Theorem \ref{BMS Omega}.

{\rm b)} The natural map $\wedge^i (H^1(\wt{\Omega}_{\calx}))\to H^i(\wt{\Omega}_{\calx})$ is an isomorphism and so there is a natural $\O_{\calx, \eet}$-module isomorphism
 $$H^i(\wt{\Omega}_{\calx})\simeq \Omega^{i}_{\calx/\O_C}\{-i\}.$$
    \end{theorem}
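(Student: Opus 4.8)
The plan is to follow the strategy of Bhatt--Morrow--Scholze in the smooth case (Theorem~\ref{tildeomega} and the results preceding it), adapted to the logarithmic setting. Since all the objects involved are compatible with \'etale localization, I would first reduce to $\calx=\Spf(R)$ with $R$ a small semistable $\O_C$-algebra carrying a framing
$$A=\O_C\{T_0,\ldots,T_r,T_{r+1}^{\pm1},\ldots,T_d^{\pm1}\}/(T_0\cdots T_r-p^q)\longrightarrow R.$$
Adjoining a compatible system of $p$-power roots of the $T_i$, subject to $\prod_{i=0}^r T_i^{1/p^n}=p^{q/p^n}$ (using the fixed embedding $p^{\Q}\subset C$), produces a pro-(log-\'etale) tower $A\to A_\infty$, $R_\infty:=R\,\widehat\otimes_A A_\infty$, with Galois group $\Gamma:=\Gal(R_\infty/R)$. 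The single relation among $T_0,\ldots,T_r$ forces $\Gamma$ to be a \emph{free} $\zp(1)$-module of rank $d$ (not $d+1$): concretely $\Gamma=\{(\chi_0,\ldots,\chi_d)\in\zp(1)^{d+1}:\prod_{i=0}^r\chi_i=1\}$. By the logarithmic version of Faltings' almost purity theorem the map $\rg(\Gamma,R_\infty)\to\rg(\Delta,\widehat{\overline R})$ is an almost quasi-isomorphism, and then, exactly as in \cite[Cor.~8.13]{BMS1}, the natural maps
$$\LL\eta_{\zeta_p-1}\rg(\Gamma,R_\infty)\longrightarrow\LL\eta_{\zeta_p-1}\rg(X_{\proet},\widehat{\O}_X^+)\longrightarrow\rg(\calx,\wt\Omega_\calx)$$
are quasi-isomorphisms. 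Hence $H^i(\wt\Omega_\calx)\simeq H^i(\Gamma,R_\infty)/H^i(\Gamma,R_\infty)[\zeta_p-1]$, and a decompletion argument (the log analogue of \cite[Prop.~8.9]{BMS1}) identifies this with $H^i(\Gamma,R)$.

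Next I would compute $H^*(\Gamma,R)$ and thereby make a) and b) precise. Since $\Gamma\simeq\zp(1)^d$ acts trivially on $R$, its continuous cohomology is the exterior algebra $\bigwedge^\bullet_R H^1(\Gamma,R)$ over $H^1(\Gamma,R)=\Hom_{\cont}(\Gamma,R)$, a free $R$-module of rank $d$ (the Koszul/Lazard computation), and the cup-product $\bigwedge^i H^1(\Gamma,R)\to H^i(\Gamma,R)$ is an isomorphism; this yields part~b) \'etale-locally. It remains to identify $H^1(\Gamma,R)$ canonically with $\Omega^1_{\calx/\O_C}\{-1\}$. With the chosen framing, $\Omega^1_{R/\O_C}$ is free with basis $\dlog T_1,\ldots,\dlog T_d$ (and $\dlog T_0=-\sum_{j=1}^r\dlog T_j$, mirroring the relation $\prod_{i=0}^r\chi_i=1$), and I would send $\dlog T_i$ to the class of the cocycle $\gamma\mapsto\dlog\zeta_\gamma^{(i)}$, where $\zeta_\gamma^{(i)}=(\gamma(T_i^{1/p^n})/T_i^{1/p^n})_n\in\zp(1)$, interpreted precisely as in the explicit formula recalled after Theorem~\ref{tildeomega}. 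The genuine content of~a) is that this map is independent of the framing, hence globalizes: one proves this either by recomputing $\wt\Omega_\calx$ with the \emph{logarithmic} cotangent complex in place of the completed cotangent complex used in \cite[Prop.~8.15]{BMS1}, or, equivalently, by realizing the isomorphism through the Kummer-theoretic map $\dlog\colon M_\calx^{\gp}\to H^1(\wt\Omega_\calx)\{1\}$ coming from $\widehat{\O}_X^+(1)$ on $X_{\proet}$ and checking that it factors through $\Omega^1_{\calx/\O_C}$.

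To finish, on the smooth locus $\calx^{\rm sm}$ the framing reduces to a toric one (one may take $r=0$, so $\dlog T_0=\dlog p^q=0$) and the cocycle formula above visibly degenerates to the one defining the isomorphism of Theorem~\ref{BMS Omega}, giving the compatibility asserted in~a). Combining a) with the exterior-algebra isomorphism of the previous step yields $H^i(\wt\Omega_\calx)\simeq\bigwedge^i\bigl(\Omega^1_{\calx/\O_C}\{-1\}\bigr)\simeq\Omega^i_{\calx/\O_C}\{-i\}$ (using that $\calx$ is semistable, so $\Omega^i=\bigwedge^i\Omega^1$), and since every isomorphism constructed is canonical they glue over a covering of $\calx$ by small semistable charts, proving the global statement.

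I expect the main obstacle to be precisely the coordinate-free construction and globalization in part~a): producing an intrinsic identification $H^1(\wt\Omega_\calx)\simeq\Omega^1_{\calx/\O_C}\{-1\}$ that is simultaneously compatible with all framings, together with the log decompletion step. One has to keep careful track of the $(\zeta_p-1)$-torsion killed by $\LL\eta_{\zeta_p-1}$, and to exploit the fact that the relation $T_0\cdots T_r=p^q$ makes $\Gamma$ free of rank $d$, so that the Koszul complex computing $H^*(\Gamma,R)$ has exactly the shape of the logarithmic de Rham complex $\Omega^\bullet_{\calx/\O_C}$ — and then to verify, via the explicit cocycle formula, that the resulting isomorphism indeed extends the one of Theorem~\ref{BMS Omega} on the smooth locus.
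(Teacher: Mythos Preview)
This theorem is quoted from \v{C}esnavi\v{c}ius--Koshikawa, so the ``paper's proof'' is really the Remark following it, which sketches their strategy. Your proposal is a reasonable outline, but it takes a genuinely different route from the one the paper describes, and the difference is worth noting.

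The paper's (i.e., \v{C}esnavi\v{c}ius--Koshikawa's) approach to a) does \emph{not} pass through the logarithmic cotangent complex or a Kummer-type map. Instead, the same completed (classical, non-log) cotangent complex argument as in BMS produces a map $\Omega^{1,\cl}_{\calx/\O_C}\{-1\}\to \R^1\nu_*(\widehat{\O}_X^+)$ from the \emph{classical} differentials, and one composes with the projection to $H^1(\wt\Omega_\calx)$. Over the smooth locus $\calx^{\rm sm}$ this lands in $(\zeta_p-1)H^1(\wt\Omega_\calx)$ and is an isomorphism onto it after dividing by $\zeta_p-1$. The key new input in the semistable case is that $H^1(\wt\Omega_\calx)$ is a \emph{vector bundle}; since the target is torsion-free one can then divide the map globally by $\zeta_p-1$, obtaining a map $\Omega^{1}_{\calx/\O_C}\{-1\}\to H^1(\wt\Omega_\calx)$ between vector bundles which is an isomorphism on the dense open $\calx^{\rm sm}$, hence everywhere. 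This sidesteps entirely the issue you flag as the main obstacle: no logarithmic cotangent complex is needed, and coordinate-independence is inherited directly from BMS.

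Your approach --- local Galois-cohomology computation with a semistable framing, then globalization via a log cotangent complex or a Kummer $\dlog$ map --- would also work in principle, and has the virtue of making the isomorphism explicit in coordinates (indeed CK do carry out such local computations to verify things). But the globalization step you leave open is exactly what the vector-bundle trick buys you for free. In short: your local computations are correct and match what one finds in CK, but the intrinsic construction they use (and the paper reports) is different and more economical than either of the two options you propose at the end.
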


    \begin{remark} 1) A few words about a). The same arguments as in \cite{BMS1} (using completed cotangent complexes) give a map      $\Omega^{1,\cl}_{\calx/\O_C}\{-1\}\to \R^1\nu_*(\widehat{\O}_X^+)$, where we denoted by  the superscript $(-)^{\cl}$ the classical, non logarithmic, differential forms.
The results in \cite{BMS1} ensure that the resulting map (the second map being the natural projection)
     \begin{equation}
     \label{equation1}
     \Omega^{1,\cl}_{\calx/\O_C}\{-1\}\to \R^1\nu_*(\widehat{\O}_X^+)\to \frac{\R^1\nu_*(\widehat{\O}_X^+)}{\R^1\nu_*(\widehat{\O}_X^+)[\zeta_p-1]}\simeq H^1(\wt{\Omega}_{\calx})
     \end{equation}
     restricts to an isomorphism $\Omega^1_{\calx^{\rm sm}/\O_C}\{-1\}\simeq (\zeta_p-1)H^1(\wt{\Omega}_{\calx})|_{\calx^{\rm sm}}$. Moreover, one shows that $H^1(\wt{\Omega}_{\calx})$ is a vector bundle.
     Hence one can divide the map (\ref{equation1}) by $\zeta_p-1$
     to obtain a map $$
     \Omega^1_{\calx/\O_C}\{-1\}\to H^1(\wt{\Omega}_{\calx})$$
      which is an isomorphism over $\calx^{\rm sm}$. One shows that this extends to the isomorphism in a). 
     
2) A few words about the maps in b). Letting $K=\R\nu_*(\widehat{\O}_X^+)$ and using the 
      identifications
     $$H^1(\wt{\Omega}_{\calx})\simeq \frac{H^1(K)}{H^1(K)[\zeta_p-1]}, \,\, H^i(\wt{\Omega}_{\calx})\simeq \frac{H^i(K)}{H^i(K)[\zeta_p-1]},$$ they are induced by the product maps $H^1(K)^{\otimes i}\to H^i(K)$, which, in turn, are induced by the  product maps $$H^j(K)\otimes_{\O_{\calx, \eet}} H^k(K)\to H^{j+k}(K\otimes_{\O_{\calx, \eet}}^{L} K)\to H^{j+k}(K).$$

          \end{remark}

        We continue assuming that $\calx$ is semistable. Recall that 
         the map $\tilde{\theta}=\theta\circ \varphi^{-1}: \Ainf\to \widehat{\O}^+_X$ is surjective, with kernel generated by 
 $\tilde{\xi}=\varphi(\xi)$, and it sends $\mu$ to $\zeta_p-1$, inducing therefore a morphism
 $$\ao/\tilde{\xi}:=\ao\otimes^{L}_{A_{\rm inf}, \tilde{\theta}} \O_C\to \wt{\Omega}_{\calx}.$$
  
    \begin{theorem}\label{dRspec}{\rm(\v{C}esnavi\v{c}ius-Koshikawa, \cite[Th. 4.2, Th. 4.17, Cor. 4.6]{CK})}
  \begin{enumerate}
  \item  The  above morphism $\ao/\tilde{\xi}\to  \wt{\Omega}_{\calx}$ is a quasi-isomorphism.
  \item
  There is a natural quasi-isomorphism $\ao/\xi\stackrel{\sim}{\to} \Omega^{\bullet}_{\calx/\O_C}$.
  \item The complex $\ao$ is derived $\tilde{\xi}$-complete. Hence so is $\rg_{\eet}(\calx,\ao)$ (and  its cohomology groups). 
  \end{enumerate}
\end{theorem}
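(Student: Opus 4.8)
The plan is to follow the strategy of Bhatt--Morrow--Scholze \cite{BMS1}, in the logarithmic form given by \v{C}esnavi\v{c}ius--Koshikawa \cite{CK}. All three assertions are local on $\calx_{\eet}$ and concern objects of $D(\calx_{\eet})$, so for (1) and (2) one may assume $\calx=\Spf R$ with $R$ small semistable over $\O_C$, equipped with a framing $A=\O_C\{T_0,\ldots,T_r,T_{r+1}^{\pm 1},\ldots,T_d^{\pm 1}\}/(T_0\cdots T_r-p^q)\to R$. The heart of the matter is a local description of $\R\nu_*\Ainf$: adjoining compatible $p$-power roots of $T_0,\ldots,T_d$ gives a perfectoid cover $R_\infty$ of $R$ with Galois group $\Gamma\simeq\Zp(1)^d=\oplus_{i=1}^d\Zp\gamma_i$, and by the almost purity theorem $\rg(X_{\proet},\Ainf)$ is \emph{almost} computed by the Koszul complex $K:=K(A_{\rm inf}(R_\infty);\gamma_1-1,\ldots,\gamma_d-1)$. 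The decisive point --- the reason the d\'ecalage functor is present at all --- is that applying $\LL\eta_\mu$ upgrades this to an honest quasi-isomorphism $\rg(\calx_{\eet},\ao)\simeq\LL\eta_\mu K$, since the error term is killed by $W(\mathfrak{m}^{\flat})$ and $\LL\eta_\mu$ eliminates such a discrepancy (the key use of $\LL\eta$ in \cite{BMS1}). Moreover $\gamma_i-1$ is divisible by $\mu$ in $K$ --- $\gamma_i$ scales the $p$-power-root tower of $T_i$ by $\varepsilon$, hence acts as multiplication by $[\varepsilon]=1+\mu$ on the corresponding Teichm\"uller element --- so the usual computation of $\LL\eta$ of such a Koszul complex presents $\LL\eta_\mu K$ as a Koszul-type complex on the twisted operators $(\gamma_i-1)/\mu$.

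Granting this, both specializations become algebra on $K$, using the properties of $\LL\eta$ recalled above. For (1): since $\tilde\theta\colon\Ainf\to\widehat{\O}^+_X$ has kernel generated by the global nonzerodivisor $\tilde\xi$ and sends $\mu$ to $\zeta_p-1$, we get $\R\nu_*\Ainf/\tilde\xi=\R\nu_*(\Ainf/\tilde\xi)=\R\nu_*\widehat{\O}^+_X$, so $K/\tilde\xi$ computes $\rg(X_{\proet},\widehat{\O}^+_X)$; the explicit shape of the Koszul cohomology of $A_{\rm inf}(R_\infty)$ modulo $\mu$ shows it has no $\tilde\xi$-torsion, so $\LL\eta_\mu$ commutes with reduction modulo $\tilde\xi$ here and gives $(\LL\eta_\mu K)/\tilde\xi\simeq\LL\eta_{\zeta_p-1}(K/\tilde\xi)\simeq\wt\Omega_\calx$. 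As the map $\ao/\tilde\xi\to\wt\Omega_\calx$ of the statement is already globally defined, it suffices to know it is a quasi-isomorphism locally, which the above does (for $\wt\Omega_\calx$ itself one also needs the coordinate-free description of $H^1(\wt\Omega_\calx)$ via completed cotangent complexes, recalled after Theorem \ref{CK}). For (2): write $\LL\eta_\mu=\LL\eta_\xi\circ\LL\eta_{\varphi^{-1}(\mu)}$, so that $\ao/\xi$ is the Bockstein complex attached to $T':=\LL\eta_{\varphi^{-1}(\mu)}\R\nu_*\Ainf$ modulo $\xi$; since $\theta(\varphi^{-1}(\mu))=\zeta_p-1$ is a nonzerodivisor and $\Ainf/\xi=\widehat{\O}^+_X$, the same reasoning gives $T'/\xi\simeq\LL\eta_{\zeta_p-1}\R\nu_*\widehat{\O}^+_X=\wt\Omega_\calx$, whose cohomology is $\Omega^i_{\calx/\O_C}\{-i\}$ by Theorem \ref{CK}. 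It then remains to check that the Bockstein differential matches the de Rham differential and that the Tate twists combine with the lines $\xi^i A_{\rm inf}/\xi^{i+1}A_{\rm inf}$ to untwist; the net effect --- transparent in the local model, where $(\gamma_i-1)/\mu$ reduces modulo $\xi$ to the logarithmic derivation $T_i\partial_{T_i}$ --- is that $\ao/\xi$ decompletes to the logarithmic de Rham complex $\Omega^{\bullet}_{\calx/\O_C}$.

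Part (3) is formal. Locally $\Ainf$ is the Witt vectors of a perfectoid ring, hence classically $\tilde\xi$-adically complete, so the sheaf $\Ainf$ is derived $\tilde\xi$-complete; since $\R\nu_*$ commutes with derived completion, $\R\nu_*\Ainf$ is derived $\tilde\xi$-complete; since $\LL\eta_\mu$ preserves derived $\tilde\xi$-completeness, so is $\ao=\LL\eta_\mu\R\nu_*\Ainf$; and $\rg_{\eet}(\calx,\ao)=\R\Gamma(\calx_{\eet},\ao)$ is derived $\tilde\xi$-complete for the same reason ($\R\Gamma$ commutes with derived completion), whence so are its cohomology groups, since a complex is derived $\tilde\xi$-complete if and only if its cohomology groups are. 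The main obstacle is in the first paragraph: controlling the almost-mathematics in the almost purity theorem precisely enough that $\LL\eta_\mu$ turns the almost quasi-isomorphism $K\to\rg(X_{\proet},\Ainf)$ into an honest one, and making the local identifications of the second paragraph coordinate-free so that they hold globally. The extra content of \cite{CK} over \cite{BMS1} is verifying that the logarithmic structure along $T_0\cdots T_r=p^q$ and the normalization used to build $R_\infty$ introduce no spurious torsion.
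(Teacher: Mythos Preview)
The paper does not prove this theorem: it is simply quoted from \v{C}esnavi\v{c}ius--Koshikawa \cite{CK} (with the smooth case due to \cite{BMS1}), and no argument is given beyond the citation. Your sketch is therefore not being compared against an in-paper proof but against the cited literature, and as such it is a faithful and essentially correct outline of the BMS/\v{C}K strategy: local reduction to a framed small semistable $R$, the almost-purity identification of $\R\nu_*\Ainf$ with a Koszul complex for the $\Gamma$-action on $A_{\rm inf}(R_\infty)$, the use of $\LL\eta_\mu$ to convert the almost quasi-isomorphism into an honest one, and then the two specializations via the listed properties of $\LL\eta$. Your derivation of (3) from the preservation of derived completeness under $\R\nu_*$, $\LL\eta_\mu$, $\R\Gamma$, and passage to cohomology is exactly the formal argument intended.

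One small point worth tightening: in your argument for (1) you invoke the criterion ``$\LL\eta_f(T)/g\to\LL\eta_f(T/g)$ is an isomorphism if $H^*(T/f)$ has no $g$-torsion'' with $f=\mu$, $g=\tilde\xi$, and justify it by saying the Koszul cohomology modulo $\mu$ has no $\tilde\xi$-torsion. Since $\tilde\xi\equiv p\pmod{\mu}$, what you actually need is that $H^*(K/\mu)$ is $p$-torsion-free, which is indeed the shape of the local computation in \cite{BMS1,CK}; it would be clearer to say so. Similarly, in (2) the commutation $T'/\xi\simeq\LL\eta_{\zeta_p-1}(\R\nu_*\widehat{\O}^+_X)$ needs the analogous torsion-freeness hypothesis for $H^*(T/\varphi^{-1}(\mu))$ with respect to $\xi$, which again holds by the explicit local model but deserves a word. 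These are refinements of phrasing rather than gaps.
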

 For $i\geq 0$, (using the above theorems) we define
 \begin{itemize}
 \item  the {\em Hodge-Tate specialization} map
 \begin{enumerate}
 \item (on sheaves) as the composition $$\tilde{\iota}_{\htt}: \ao\to \ao/\tilde{\xi}\to \wt{\Omega}_{\calx}.$$
 \item  (on cohomology)
 $\iota_{\htt}: H^i_{\eet}(\calx,\ao)\to H^0_{\eet}(\calx, \Omega^i_{\calx/\so_C}\{-i\})$ as the  composition
 $$
 \iota_{\htt}: H^i_{\eet}(\calx,\ao)\lomapr{\tilde{\iota}_{\htt}}H^i_{\eet}(\calx,  \wt{\Omega}_{\calx})\to H^0_{\eet}(\calx, H^i(\wt{\Omega}_{\calx}))\simeq H^0_{\eet}(\calx, \Omega^i_{\calx/\so_C}\{-i\}),
 $$
 where the second map is the edge morphism in the spectral sequence
 $$
 E_2^{i,j}=H^i_{\eet}(\calx, H^j(\wt{\Omega}_{\calx})) \Rightarrow H^i_{\eet}(\calx,  \wt{\Omega}_{\calx}).
 $$
 \end{enumerate}
 \item 
the {\em de Rham specialization} map as the composition $\tilde{\iota}_{\dr}: \ao\to\ao/\xi\stackrel{\sim}{\to }\Omega^{\bullet}_{\calx/\mathcal{O}_C}$  yielding on cohomology 
 a map $$
 \iota_{\dr}: H^i_{\eet}(\calx,\ao)\lomapr{\tilde{\iota}_{\dr}} H^i_{\dr}(\calx).
$$
\end{itemize}
\subsection{$p$-adic nearby  cycles and $A_{\rm inf}$-cohomology}

   We review here a result from \cite{BMS2}, which describes integral $p$-adic \'etale cohomology in terms of 
   the complex $\ao$.    
      Let $X$ be a smooth adic space over $C$ and let $\calx$ be  a
      flat formal model of $X$ (not necessarily semistable !). 
      Fix an integer $i\geq 0$. Recall  that there is a natural endomorphism (as abelian sheaf, 
      \textbf{not} as $A_{\rm inf}$-module !) 
      $$\xi^i\varphi^{-1}: \tau_{\leq i}\ao\to \tau_{\leq i}\ao$$
       defined as the composition
      \begin{align*}
      \tau_{\leq i}\ao=\LL\eta_{\mu}\tau_{\leq i}\R\nu_*{\mathbb A}_{\inf}\lomapr{\phi^{-1}}\LL\eta_{\phi^{-1}(\mu)}\tau_{\leq i}\R\nu_*{\mathbb A}_{\inf}\stackrel{\xi^i}{\to} \LL\eta_{\xi} \LL\eta_{\phi^{-1}(\mu)}\tau_{\leq i}\R\nu_*\Ainf=\tau_{\leq i}\ao.
      \end{align*}

      The following result is proved in \cite{BMS2} in the good reduction case. As we show below the proof goes through  in a more general setting.
      \begin{theorem}\label{BMS2} {\rm(Bhatt-Morrow-Scholze, \cite[Chapter 10]{BMS2})}
         Let $X$ be a smooth adic space over $C$ with a  flat formal model $\calx$. Let $i\geq 0.$ There is a natural quasi-isomorphism
         $$\mu^i:  \tau_{\leq i}\R\nu_* \wh{\Z}_p\stackrel{\sim}{\to} \tau_{\leq i}[\tau_{\leq i}\ao\xrightarrow{1-\xi^i\varphi^{-1}} \tau_{\leq i}\ao],$$
         where    $\wh{\Z}_p:=\invlim_n\Z/p^n$ and $[\cdot]$ denotes the homotopy fiber.
      \end{theorem}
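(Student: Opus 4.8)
The plan is to reduce to the good-reduction case treated in \cite[Chapter 10]{BMS2} by a local-on-$\calx_{\eet}$ argument, using that the statement involves only the truncation $\tau_{\leq i}$ and hence is insensitive to the behaviour of $\R\nu_*\Ainf$ in degrees $>i$. First I would recall from \cite{BMS2} the construction of the map $\mu^i$: it is built from the Artin--Schreier-type sequence for $\Ainf$ on the pro-\'etale site, namely the exact sequence of pro-\'etale sheaves $0\to \wh{\Z}_p\to \Ainf\xrightarrow{1-\phi^{-1}}\Ainf\to 0$ (more precisely its $\mu$-twisted and $\xi^i$-twisted form), to which one applies $\R\nu_*$ and then the d\'ecalage functor $\LL\eta_\mu$, invoking property (3) of $\LL\eta$ in \S2.1.2 that $\LL\eta_f$ takes an exact triangle to an exact triangle provided the relevant Bockstein boundary map $H^j(M/f)\to H^{j+1}(T/f)$ vanishes — this vanishing is exactly what forces the appearance of $\xi^i\phi^{-1}$ rather than $\phi^{-1}$ on $\tau_{\leq i}\ao$, and it is checked degree by degree for $j\leq i$. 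The point is that every ingredient here is local and makes no reference to properness or quasi-compactness of $\calx$.

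Next I would verify that the formation of both sides of the asserted quasi-isomorphism commutes with restriction to an \'etale cover of $\calx$: the functor $\R\nu_*$ is compatible with \'etale localization on the formal model (the pro-\'etale site of $X$ localizes over $\calx_{\eet}$ via $\nu$), $\LL\eta_\mu$ commutes with \'etale localization and with $\tau_{\leq i}$ by property (1) in \S2.1.2, and the homotopy fiber construction is manifestly local. Hence it suffices to prove the theorem \'etale-locally on $\calx$. Locally $\calx$ is $\Spf(R)$ with $R$ formally smooth (the good reduction case) — note the statement of Theorem~\ref{BMS2} only assumes $\calx$ is a flat formal model of a \emph{smooth} $X$, so the semistable structure on $\calx$ plays no role and one genuinely is in the situation of \cite{BMS2}. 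In the smooth local case one has the explicit description of $\R\nu_*\Ainf$ via the decompletion $\rg(\Gamma,\Ainf(R_\infty))$ and the Koszul-type computations of \cite[Chapter 9]{BMS1}, and the argument of \cite[Chapter 10]{BMS2} applies verbatim to produce $\mu^i$ and prove it is a quasi-isomorphism after $\tau_{\leq i}$.

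The main obstacle I anticipate is checking the vanishing of the Bockstein boundary maps in the degree range $j\leq i$ carefully enough that property (3) of $\LL\eta_\mu$ can be invoked to turn the Artin--Schreier triangle into an exact triangle after applying $\LL\eta_\mu$ and truncating; this is where the precise bookkeeping of the twists $\mu$, $\xi$, $\tilde\xi$ and the factor $\xi^i$ must be done, and it is the technical heart of \cite[Chapter 10]{BMS2}. A secondary point requiring care is the glueing: one must check that the locally constructed maps $\mu^i$ agree on overlaps, which follows from the canonicity (coordinate-independence) of the d\'ecalage construction and of the Artin--Schreier sequence, but should be stated explicitly. Once these are in place the theorem follows, and I would close by remarking that all constructions are functorial in $\calx$, which is what will later let us carry along the $G\times\mathcal{G}_K$-action on $\mathbb{H}^d_C$.
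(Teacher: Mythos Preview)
Your proposal contains a genuine gap. You assert that ``locally $\calx$ is $\Spf(R)$ with $R$ formally smooth,'' but this is false: the hypothesis is only that $\calx$ is a \emph{flat} formal model of the smooth adic space $X$, and flat formal models need not be smooth \'etale-locally. Indeed the whole point of stating the theorem in this generality is to cover the semistable model $\calx_{\O_K}$ of $\mathbb{H}^d_K$, which has nodal special fibre and is certainly not \'etale-locally smooth over $\O_C$. So your reduction to the good-reduction case of \cite{BMS2} simply does not go through.

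The paper's proof proceeds quite differently, and in a way that is insensitive to any structure on $\calx$ beyond flatness. Rather than applying $\LL\eta_\mu$ to an Artin--Schreier triangle (which, as you note, would require delicate Bockstein vanishing via property~(3)), one works with $T:=\R\nu_*\Ainf$ directly. One first establishes on $X_{\proet}$ the twisted exact sequence
\[
0\to \wh{\Z}_p\xrightarrow{\mu^i}\Ainf\xrightarrow{1-\psi_i}\Ainf\to 0,\qquad \psi_i:=\xi^i\varphi^{-1},
\]
together with the fact that $1-\psi_i$ is an automorphism of $\Ainf/\mu^j$ for $j\leq i$. Pushing forward gives $\tau_{\leq i}\R\nu_*\wh{\Z}_p\simeq \tau_{\leq i}T^{\psi_i=1}$. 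The remaining (and harder) step is to compare $\tau_{\leq i}T^{\psi_i=1}$ with $\tau_{\leq i}(\tau_{\leq i}\ao)^{\psi_i=1}$ via the canonical map $\ao\to T$: one shows that $\psi_i$ acts bijectively on the kernel and cokernel of $H^j(\ao)\to H^j(T)$ for $j<i$, and suitably for $j=i$. Since these (co)kernels are identified with $\mu$-torsion pieces $M_j[\mu^j]/M_j[\mu]$ and $M_j/\mu^j$ of $M_j:=H^j(T)$, the required bijectivity follows from an elementary analysis of how $\psi_l$ interacts with $\mu$-torsion (using $\xi\equiv p\pmod{\varphi^{-1}(\mu)}$ and the $p$-adic completeness of $\Ainf$). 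This argument never invokes local coordinates on $\calx$ and therefore works for an arbitrary flat formal model.
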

      \begin{remark}
      We warn the reader that this quasi-isomorphism is not Galois equivariant (in the case $X$ is defined over $K$). For an equivariant version see Corollary \ref{BMS22} below.
      \end{remark}
      \begin{proof}   We follow \cite{BMS2}  faithfully, but work directly on the $p$-adic level. Let $\psi_i=\xi^i\varphi^{-1}$, seen as an endomorphism of $\tau_{\leq i}\ao$ (as explained above) or of $T:=\R\nu_*\Ainf$ (defined in the obvious way). These two endomorphisms are compatible with 
      the canonical map
      ${\rm can}: \ao\to T$.
      
      We start with the following simple fact: 
      \begin{lemma} \label{AS} 
{\rm a)} For $i\geq j$, $1-\psi_i$ induces an automorphism of the sheaf $\Ainf/\mu^j$.
    
{\rm b)}
 There is an exact sequence of sheaves on $X_{\proet}$
        $$0\to \wh{\Z}_p\xrightarrow{\mu^i} \Ainf\xrightarrow{1-\psi_i} \Ainf\to 0.$$
      \end{lemma}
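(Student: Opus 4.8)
I would prove (b) first and deduce (a) from it; both parts reduce to a computation modulo $p$. To set up the reduction, note that $\Ainf$ and $\Ainf/\mu^j$ are classically (hence derived) $p$-complete and $p$-torsion-free, and that $1-\psi_i$ is $\Z_p$-linear and preserves the $p$-adic filtration; so by the standard dévissage — lift a kernel element, resp. a solution, modulo $p$, iterate, and pass to the inverse limit (Mittag--Leffler for the $\varprojlim$) — all of (a) and (b) follow once one knows them modulo $p$. Now $\Ainf/p$ is the perfect sheaf of $\mathbf{F}_p$-algebras $\varprojlim_\varphi\O_X^+/p$, on which $\varphi^{-1}$ is $x\mapsto x^{1/p}$; writing $w$ for the image of $\varphi^{-1}(\mu)=[\varepsilon^{1/p}]-1$, one has $w^p=\mu\bmod p$ (raising to the $p$-th power is additive here), hence $\xi\bmod p=w^{p-1}$, so $\psi_i$ reduces to the additive operator $x\mapsto w^{i(p-1)}x^{1/p}$.

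For (b), it then suffices to show $0\to\underline{\mathbf{F}}_p\xrightarrow{\,w^{ip}\,}\Ainf/p\xrightarrow{1-\psi_i}\Ainf/p\to 0$ is exact on $X_{\proet}$ (and $w^{ip}=\mu^i\bmod p$). Since $x\mapsto x^{1/p}$ is bijective on $\Ainf/p$, the substitution $u=x^{1/p}$ turns $1-\psi_i$ into $u\mapsto u^p-w^{i(p-1)}u=\prod_{\zeta\in\mathbf{F}_p}(u-\zeta w^i)$, whose kernel is $\{u:u(u^{p-1}-w^{i(p-1)})=0\}=\underline{\mathbf{F}}_p\cdot w^i$; transporting back through $x=u^p$ gives $\ker(1-\psi_i)=\underline{\mathbf{F}}_p\cdot w^{ip}$, which is exactness at the first two terms. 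For surjectivity I would work locally: given a section $y$, the finite $\Ainf/p$-algebra $(\Ainf/p)[u]/(u^p-w^{i(p-1)}u-y)$ has relative differentials killed by the derivative $-w^{i(p-1)}$, hence becomes finite étale after inverting a pseudo-uniformizer — here one uses that $w=[\varepsilon^{1/p}]-1$, though not a unit in $\Ainf/p$, becomes a unit after inverting a pseudo-uniformizer; by tilting this is a finite étale, in particular pro-étale, cover of $X$, over which the equation has a root $u$, and $u$, being integral over $\Ainf/p$, is again a section of $\Ainf/p$ over the cover. So $1-\psi_i$ is surjective (honestly, not just almost), proving (b).

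For (a) I would fix $i\ge j$ and use the identity $\psi_i(\mu^j x)=\xi^i\varphi^{-1}(\mu)^j\varphi^{-1}(x)=\xi^{i-j}\mu^j\varphi^{-1}(x)=\mu^j\psi_{i-j}(x)$ on $\Ainf$, valid because $\mu=\xi\varphi^{-1}(\mu)$. Thus multiplication by $\mu^j$ is a morphism of two-term complexes $[\Ainf\xrightarrow{1-\psi_{i-j}}\Ainf]\to[\Ainf\xrightarrow{1-\psi_i}\Ainf]$. Reducing mod $p$ and applying (b) for the indices $i-j$ and $i$, this becomes a morphism of short exact sequences of abelian sheaves on $X_{\proet}$: the left vertical arrow $\underline{\mathbf{F}}_p\cdot w^{(i-j)p}\xrightarrow{\,w^{jp}\,}\underline{\mathbf{F}}_p\cdot w^{ip}$ is an isomorphism, while the middle and right vertical arrows are multiplication by $\mu^j\bmod p$, injective with cokernel $(\Ainf/\mu^j)/p$. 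The snake lemma forces the induced map $1-\psi_i$ on $(\Ainf/\mu^j)/p$ to be an isomorphism, and the $p$-adic lifting of the first paragraph then gives that $1-\psi_i$ is an automorphism of $\Ainf/\mu^j$, which is (a).

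The step I expect to be the real obstacle is the surjectivity in (b): making rigorous that the Artin--Schreier-type equation $u^p-w^{i(p-1)}u=y$ becomes solvable over a pro-étale cover, with the solution integral over $\Ainf/p$. The rest is formal manipulation with the sheaf $\Ainf$ and its $p$-adic and $\mu$-adic filtrations; in particular the non-quasi-compactness of $X$ and $\mathfrak{X}$, the source of difficulty elsewhere in the paper, plays no role here since everything is local on $X_{\proet}$.
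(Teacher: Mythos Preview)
Your argument is correct, and it takes a route genuinely different from the paper's. The paper proceeds in the opposite order: it first invokes (a) as a black box from Morrow \cite[Lemma~3.5(iii)]{Morr}, and then deduces (b) from (a) together with the basic Artin--Schreier sequence $0\to\wh{\Z}_p\to\Ainf\xrightarrow{1-\varphi^{-1}}\Ainf\to 0$ (also from \cite{Morr}). Concretely, surjectivity in (b) comes from (a) with $j=1$ plus $\mu$-adic completeness of $\Ainf$, and the kernel comes from (a) with $j=i$ (forcing $\ker(1-\psi_i)\subset\mu^i\Ainf$) plus the identity $\psi_i(\mu^i x)=\mu^i\varphi^{-1}(x)$, reducing to the case $i=0$. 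Your approach instead reduces everything modulo $p$, proves (b) by a direct Artin--Schreier--type computation on the perfect sheaf $\O_X^{+,\flat}$, and then recovers (a) from two instances of (b) via the snake lemma; the passage back to $\Ainf$ uses $p$-adic rather than $\mu$-adic completeness. Both arguments hinge on the same identity $\psi_i(\mu^j x)=\mu^j\psi_{i-j}(x)$, but yours is self-contained (no appeal to \cite{Morr}) at the cost of unwinding the tilting correspondence for the surjectivity step; the paper's is shorter but imports the substance from Morrow.

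One small point worth tightening in your write-up: the equality $\{u:u(u^{p-1}-w^{i(p-1)})=0\}=\underline{\mathbf{F}}_p\cdot w^i$ is a sheaf statement, not a pointwise one on sections (the product $\prod_\zeta(u-\zeta w^i)$ does not split over $R^{+,\flat}$ since $w$ is not a unit there). The clean justification is that any such $u$ gives $u/w^i\in R^\flat$ satisfying $X^p-X=0$, hence integral over $\mathbf{F}_p$, hence in $R^{+,\flat}$; then $u/w^i$ is a section of the constant sheaf $\underline{\mathbf{F}}_p$ locally on $X_{\proet}$. The surjectivity step you flagged is indeed where the content lies, and your outline via tilting of the finite \'etale $R^\flat$-algebra $R^\flat[u]/(u^p-w^{i(p-1)}u-y)$ is the right one; the root $u$ lands in the tilt of $S^+$ by integrality, as you note.
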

      
      \begin{proof}  a) This is proved by  Morrow in \cite[Lemma 3.5 (iii)]{Morr}.

      b) Clearly, this is a sequence. For surjectivity use a) to deduce surjectivity modulo  $\mu$ and then use the fact that $\Ainf$ is $\mu$-adically complete. 
     To describe $\ker(1-\psi_i)$, note that we have $\mu^i\wh{\Z}_p\subset \ker(1-\psi_i)$. It suffices thus to show that $\ker(1-\psi_i)\subset \mu^i\wh{\Z}_p$. But by part a) 
     we have 
     $$
     \ker(\Ainf\xrightarrow{1-\psi_i} \Ainf)\stackrel{\sim}{\leftarrow} \mu^i \ker(\Ainf\xrightarrow{1-\phi^{-1}} \Ainf)\stackrel{\sim}{\leftarrow}\mu^i\wh{\Z}_p.
     $$
     The last quasi-isomorphism follows from the Artin-Schreier 
       exact sequence \cite[Lemma 3.5 (ii)]{Morr}
      $$
      0\to \wh{\Z}_p\to \Ainf\xrightarrow{1-\varphi^{-1}} \Ainf\to 0.
      $$
     This finishes the proof of the lemma.
            \end{proof}
      
     Write $U^{\psi_i=1}$ for the homotopy fiber of $\psi_i-1: U\to U$ for $U\in \{\ao, T\}$.
               The above lemma gives an exact triangle
        $$ \R\nu_*\wh{\Z}_p\xrightarrow{\mu^i} T\xrightarrow{\psi_i-1} T,$$
        inducing a quasi-isomorphism
        $$\mu^i: \tau_{\leq i}  \R\nu_*\wh{\Z}_p\stackrel{\sim}{\to}  \tau_{\leq i} T^{\psi_i=1}.$$
        To finish the proof of our theorem, it remains to show (and this is the hard part) that the natural map (induced by the natural maps 
        ${\rm can}: \ao\to T$ and $\tau_{\leq i} \ao\to \ao$)
        $$\tau_{\leq i}(\tau_{\leq i}\ao)^{\psi_i=1} \to \tau_{\leq i}T^{\psi_i=1}$$
        is a quasi-isomorphism.
        
         By easy homological algebra, this happens if 
        $\psi_i$ acts bijectively on the kernel and cokernel of ${\rm can}_j: H^j(\ao)\to H^j(T)$ for $j<i$, bijectively on the kernel for $j=i$, and injectively on the cokernel for $j=i$. 
       This is clear for $j=0$: the map 
      ${\rm can}_0$ is then bijective as $H^0(\ao)=H^0(T)$ since $H^0(\ao)\simeq H^0(T)/H^0(T)[\mu]$ and 
      $H^0(T)[\mu]=0$ as $\Ainf$ is $\mu$-torsion-free. 
      
      Assume now that $j>0$. For $i\geq 0$, write $M_i=H^i(T)$.  Recall \cite[Lemma 6.4]{BMS1} that the map $\mu^j: M_j/M_j[\mu]\to H^j(\ao)$ is an isomorphism. It follows that, 
     for $0 < j \leq i$, the canonical map ${\rm can}_j$ fits into a natural exact sequence
      $$
      0\to M_j[\mu]\to M_j[\mu^j]\to H^j(\ao)\lomapr{{\rm can}_j} M_j\to M_j/\mu^j\to 0.
      $$
      This sequence is compatible with the operators $\psi_{i-j}$, $\psi_{i-j}$, $\psi_i$, $\psi_i$, $\psi_i$, respectively.
      Thus
        it suffices to show that $\psi_{i-j}$ is bijective on 
            $M_j[\mu^j]/M_j[\mu]$, that $\psi_i$ is bijective on $M_j/\mu^j$ for $j<i$, and  is injective for $j=i$. This follows from Lemma \ref{msri} below.          
        \end{proof}
        \begin{lemma}{\rm (\cite[Lemma 10.5]{BMS2})} \label{msri}   Let  $j\geq 1$, $i\geq 0$. 
        
         $\bullet$ $\psi_{l+j}$ is bijective on $M_i/\mu^j$ for $l>0$ and is  injective for $l=0$.
        
        $\bullet$ $\psi_l$ is bijective on $M_i[\mu^j]$ for $l>0$, surjective for $l=0$.
        
        $\bullet$  $\psi_l$ is bijective on 
       $M_i[\mu^j]/M_i[\mu]$,  for $l\geq 0$. 
            
         \end{lemma}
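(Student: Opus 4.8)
The statement is \cite[Lemma~10.5]{BMS2}, and the plan is to verify that the proof given there, which is an exercise in semilinear algebra over $\Ainf$, does not use the properness of $X$ and so applies in our setting; I recall its structure below, the only genuinely global input being, via \eqref{GK00} and \cite{CDN3}, the $p$-torsion-freeness of $H^*_{\dr}(\calx)$.

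Write $M_i=H^i(\R\nu_*\Ainf)$ and recall the identities $\psi_l\circ\varphi=\xi^l$ and $\varphi\circ\psi_l=\tilde\xi^l$ on $M_i$, the relation $\psi_m(\mu^k x)=\mu^k\psi_{m-k}(x)$ for $m\geq k$ (which follows from $\mu=\xi\,\varphi^{-1}(\mu)$), and the identity $\mu^k\psi_m(x)=\xi^{k+m}\varphi^{-1}(\mu^k x)$, which shows that each $\psi_m$ preserves the $\mu$-power torsion subsheaves $M_i[\mu^k]$. First I would reduce all three assertions to the case $j=1$ by d\'evissage. For the first, one filters $M_i/\mu^j$ by the submodules $\mu^k M_i/\mu^j M_i$ together with the images of the $M_i[\mu^k]$; the displayed identities show that $\psi_{l+j}$ respects this filtration and acts on the $k$-th $\mu$-adic graded piece, a subquotient of $M_i/\mu M_i$, through $\psi_{l+j-k}$ with $l+j-k\geq l+1$. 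For the other two, one filters $M_i[\mu^j]$ by the $M_i[\mu^k]$; the graded piece $M_i[\mu^{k+1}]/M_i[\mu^k]$ embeds through multiplication by $\mu^k$ into $M_i[\mu]$, compatibly with $\psi_l$ on the source and $\psi_{l+k}$ on the target. In either case one is reduced to statements about the operators $\psi_m$ on subquotients of $M_i/\mu M_i$, respectively of $M_i[\mu]$.

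The heart of the argument is then the study of these operators modulo $\mu$. Here I would use three things: the congruence $\tilde\xi\equiv p\pmod{\mu}$, which turns $\varphi\circ\psi_m=\tilde\xi^m$ into the relation $\overline\varphi\circ\overline\psi_m=p^m$ modulo $\mu$ and reduces the injectivity statements to the vanishing of $p$-torsion; the relation $\psi_m\circ\varphi=\xi^m$ together with the derived $(p,\mu)$-completeness of $M_i$ — inherited from that of $\Ainf$ by property~(6) of derived completions, and shared by all subquotients of $M_i$ — reducing the surjectivity statements to a $p$-adic approximation; and finally the $p$-torsion-freeness of $H^*_{\dr}(\calx)$ proved in \cite{CDN3}, which, through the de Rham and Hodge-Tate specializations of $\ao$ (Theorem~\ref{dRspec}) and the acyclicity \eqref{GK00}, guarantees that the mod-$\mu$ reductions appearing in the d\'evissage carry no $p$-torsion: this is the only possible obstruction to running the argument of \cite{BMS2}.

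The main obstacle is that $\calx$ is not quasi-compact, so $M_i$ is not a finitely presented $\Ainf$-module and the finiteness arguments of \cite{BMS2} (used for instance to bound the $\mu$-power torsion uniformly) are unavailable. I would circumvent this by treating each $j$ separately, so that only the operator identities, derived $(p,\mu)$-completeness and the $p$-torsion-freeness from \cite{CDN3} enter the proof, none of which requires quasi-compactness. I expect the careful bookkeeping of the interaction between $\mu$-power torsion and $p$-torsion through the d\'evissage to be the only real technical nuisance.
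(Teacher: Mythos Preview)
There is a genuine gap, and it stems from a misreading of the context. Lemma~\ref{msri} sits inside the proof of Theorem~\ref{BMS2}, which is stated for an \emph{arbitrary} smooth adic space $X$ over $C$ with an arbitrary flat formal model~$\calx$; nothing about the Drinfeld space is available here. In particular $M_i=H^i(T)=H^i(\R\nu_*\Ainf)$ is a \emph{sheaf} on $\calx_{\eet}$, and the entire argument is local. There is no global input whatsoever: the $p$-torsion-freeness of $H^*_{\dr}(\calx)$ from~\cite{CDN3} is neither used nor usable. Even if one wanted to invoke it, the link you propose does not exist: de~Rham cohomology arises from $\ao/\xi=(\LL\eta_\mu T)/\xi$, not from $T/\mu$ or $M_i/\mu$, so the Hodge--Tate and de~Rham specializations say nothing about $p$-torsion in the sheaves $M_i/\mu$. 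Likewise, no finiteness of $M_i$ is ever used in~\cite{BMS2}, so your worry about non-quasi-compactness is misplaced; the d\'evissage you sketch, with its appeal to $p$-torsion-freeness on graded pieces, is not the argument of~\cite[Lemma~10.5]{BMS2}.

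The paper's proof is different and short. First one shows directly that $\psi_l$ is injective on $M_i[\mu]$ for $l>0$: if $\mu x=0$ and $\psi_l(x)=0$, then, using $\xi\equiv p\pmod{\varphi^{-1}(\mu)}$ and that $\varphi^{-1}(\mu)$ kills $\varphi^{-1}(x)$, one gets $(1-p\xi^{l-1}\varphi^{-1})(x)=0$; since $\Ainf$ is $p$-adically complete, $1-p\xi^{l-1}\varphi^{-1}$ is an automorphism of $\Ainf$, hence of $T$ and of $M_i$, so $x=0$. Induction on $j$ gives injectivity on $M_i[\mu^j]$. Second, the triangle $T\xrightarrow{\mu^j}T\to T/\mu^j$ yields the short exact sequence $0\to M_i/\mu^j\to H^i(T/\mu^j)\to M_{i+1}[\mu^j]\to 0$, compatible with $(\psi_{l+j},\psi_{l+j},\psi_l)$. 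The middle operator is bijective because $\psi_{l+j}$ is already an automorphism of the sheaf $\Ainf/\mu^j$ (this is~\cite[Lemma~3.5]{Morr}, invoked via Lemma~\ref{AS}); the snake lemma then gives everything: injectivity of $\psi_{l+j}$ on $M_i/\mu^j$, surjectivity of $\psi_l$ on $M_{i+1}[\mu^j]$, and identification of the cokernel on the left with the kernel on the right, which vanishes for $l>0$ by step one. The third bullet is an immediate consequence.
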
   
            
            \begin{proof} We prove first that $\psi_l$ is injective on $M_i[\mu^j]$ for $l>0$. If $\psi_l(x)=0$ and $\mu^jx=0$, then $\psi_{l+1}(\mu x)=0$ and $\mu x\in M_i[\mu^{j-1}]$, thus, arguing by induction on $j$, we may assume that $j=1$. Suppose that $\mu x=0$ and $\psi_l(x)=0$, i.e., $x-\xi^l\varphi^{-1}(x)=0$. Since 
            $\xi\equiv p\pmod {\varphi^{-1}(\mu)}$ in $A_{\rm inf}$  and $\varphi^{-1}(\mu)$ kills $\varphi^{-1}(x)$, we deduce that $(1-p\xi^{l-1}\varphi^{-1})(x)=0$, which forces $x=0$, since $1-p\xi^{l-1}\varphi^{-1}$ is an automorphism of $\Ainf$ (as $\Ainf$ is $p$-adically complete), thus also of $T=\R\nu_* \Ainf$ and $M_i$. This proves the first step.
                             
Next, the commutative diagram of distinguished triangles
$$
 \xymatrix@R=.5cm@C=1cm{
 & T \ar[r]^-{\mu^j}\ar[d]^{\psi_{l}} & T \ar[r]\ar[d]^{\psi_{l+j}} & T/\mu^j\ar[d]^{\psi_{l+j}} & \\
 & T \ar[r]^-{\mu^j} & T \ar[r] & T/\mu^j& 
 }
$$
gives a commutative diagram
$$
 \xymatrix@R=.5cm{
 0\ar[r] & M_i/\mu^j \ar[r]\ar[d]^{\psi_{l+j}} & H^i(T/\mu^j) \ar[r]\ar[d]^{\psi_{l+j}} & M_{i+1}[\mu^j]\ar[r]\ar[d]^{\psi_l} & 0\\
 0\ar[r] & M_i/\mu^j \ar[r] & H^i(T/\mu^j) \ar[r] & M_{i+1}[\mu^j]\ar[r] & 0
 }
 $$
                    Since $\psi_{l+j}$ is bijective on $H^i(T/\mu^j)$ (Lemma \ref{AS} shows that 
                    $\psi_{l+j}$ is an automorphism of $T/\mu^j$), we deduce that 
                    $\psi_{l+j}$ is injective on $M_i/\mu^j$, $\psi_l$ is surjective on 
                    $M_{i+1}[\mu^j]$ and the cokernel of $\psi_{l+j}$ on $M_i/\mu^j$ identifies with the 
                    kernel of $\psi_l$ on $M_{i+1}[\mu^j]$. This last kernel is $0$ for $l>0$ (by the first step), thus 
                    $\psi_l$ is bijective on $M_{i+1}[\mu^j]$ (this holds trivially on $M_0[\mu^j]=0$) and $\psi_{l+j}$ is bijective on $M_i/\mu^j$ for $l>0$.
                             
          Finally,
           we need to show that $\psi_l$ is an automorphism of $M_i[\mu^j]/M_i[\mu]$. Surjectivity follows from that of $\psi_l$ on $M_i[\mu^j]$. For injectivity, note that if $\mu\psi_l(x)=0$, then $\psi_{l+1}(\mu x)=0$ and, since $\psi_{l+1}$ is injective on $M_i[\mu^{j-1}]$, we obtain $x\in M_i[\mu]$, as needed.

            \end{proof}
            Using the modified Tate  twists we can write the statement of the above theorem in the following way:
      \begin{corollary}\label{BMS22}
      Let $X$ be a smooth adic space over $C$ with a  flat formal model $\calx$. Let $i\geq 0$. There is a natural quasi-isomorphism
         $$\gamma:  \tau_{\leq i}\R\nu_* \wh{\Z}_p(i)\stackrel{\sim}{\to} \tau_{\leq i}[\tau_{\leq i}\ao\{i\}\xrightarrow{1-\varphi^{-1}} \tau_{\leq i}\ao\{i\}].$$
         It is Galois equivariant in the case $X$ is defined over $K$.
      \end{corollary}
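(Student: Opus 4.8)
The plan is to deduce the corollary from Theorem~\ref{BMS2} by repackaging every object through the modified Tate twist; the one genuinely new assertion — Galois equivariance — then drops out for free. Fix, as in the text, the basis $\zeta=(\zeta_{p^n})_{n}$ of $\zp(1)$. Since $A_{\rm inf}\{i\}=A_{\rm inf}\{1\}^{\otimes i}$ is free of rank one over $A_{\rm inf}$, say on the standard generator $1\{i\}$, the choice of $\zeta$ yields a trivialization $A_{\rm inf}\{i\}\simeq A_{\rm inf}$, $x\cdot 1\{i\}\mapsto x$, hence an isomorphism of complexes $\ao\{i\}\simeq\ao$. First I would record the two elementary facts that control it. From $\xi=\mu/\varphi^{-1}(\mu)$ and the fact that $\varphi$ acts on $A_{\rm inf}(1)$ via $\varphi_{A_{\rm inf}}\otimes\mathrm{id}$, one gets $\varphi^{-1}(1\{i\})=\xi^{i}\cdot 1\{i\}$ (recall $1\{1\}=\frac1\mu(1)$), so that under the trivialization the natural operator $\varphi^{-1}$ on $\ao\{i\}$ is carried to $\psi_i=\xi^i\varphi^{-1}$ on $\ao$, the operator from the proof of Theorem~\ref{BMS2}; in particular $1-\varphi^{-1}$ is a well-defined endomorphism of $\tau_{\leq i}\ao\{i\}$, the ad hoc multiplication by $\xi^i$ in {\em loc. cit.} being exactly the twist factor supplied by $A_{\rm inf}\{i\}$. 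Secondly, the natural $\mathcal{G}_K$-equivariant inclusion $\wh{\Z}_p(i)\hookrightarrow A_{\rm inf}(i)\hookrightarrow A_{\rm inf}\{i\}=\frac{1}{\mu^i}A_{\rm inf}(i)$ sends the generator of $\zp(i)$ to $\mu^i\cdot 1\{i\}$, i.e. corresponds to $\mu^i\colon\wh{\Z}_p\to A_{\rm inf}$ under the trivializations, and its image lies in $\ker(1-\varphi^{-1})$ since $\varphi^{-1}$ fixes that generator.

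Granting this, I would define $\gamma$ to be the composite
$$\tau_{\leq i}\R\nu_*\wh{\Z}_p(i)\longrightarrow\tau_{\leq i}\bigl[\tau_{\leq i}\ao\{i\}\xrightarrow{\,1-\varphi^{-1}\,}\tau_{\leq i}\ao\{i\}\bigr]$$
obtained by applying $\tau_{\leq i}\R\nu_*$ to the map of pro-\'etale sheaves $\wh{\Z}_p(i)\hookrightarrow\ao\{i\}$ (which, by the first paragraph, lands in the homotopy fiber of $1-\varphi^{-1}$) and then following the same homotopy-fiber recipe as for the map $\mu^i$ in Theorem~\ref{BMS2}. Choosing the basis $\zeta$ identifies $\gamma$ with that very map $\mu^i$; since being a quasi-isomorphism is independent of the chosen basis, Theorem~\ref{BMS2} shows that $\gamma$ is a quasi-isomorphism.

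Finally I would check the equivariance when $X$ is defined over $K$. Then $\mathcal{G}_K=\Gal(\ovk/K)$ acts compatibly on $C$, on $\O_{C}^{\flat}$, on $A_{\rm inf}$ and on the pro-\'etale sheaf $\Ainf$, hence on $\nu$, on $T=\R\nu_*\Ainf$ and on $\ao$; Frobenius commutes with this action; and $\mathcal{G}_K$ acts on $\zp(1)$ through the cyclotomic character, hence on $A_{\rm inf}\{1\}=\frac1\mu A_{\rm inf}(1)$, on $\wh{\Z}_p(i)$ and on the inclusion $\wh{\Z}_p(i)\hookrightarrow\ao\{i\}$. Consequently $1-\varphi^{-1}$ on $\tau_{\leq i}\ao\{i\}$, and with it the whole construction of $\gamma$, is $\mathcal{G}_K$-equivariant, which is the claim. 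It is worth noting where the non-equivariance of Theorem~\ref{BMS2} itself has gone: the trivialization $A_{\rm inf}\{i\}\simeq A_{\rm inf}$ — equivalently the choice of $\zeta$ — is not $\mathcal{G}_K$-stable, since $g(\zeta)=\chi(g)\zeta$ while $g(\xi)\neq\xi$, so neither $\mu^i\colon\wh{\Z}_p\to A_{\rm inf}$ nor $\psi_i=\xi^i\varphi^{-1}$ commutes with $\mathcal{G}_K$; the modified twist is designed precisely to absorb these two cocycles. The one step carrying real weight is the identification, in the first paragraph, of $\varphi^{-1}$ on $\tau_{\leq i}\ao\{i\}$ with the operator $\psi_i$ defined via $\LL\eta$ and the truncation in the proof of Theorem~\ref{BMS2} — but this is precisely the design of the modified Tate twist in \cite{BMS1}, \cite{BMS2}, so it is a matter of unwinding {\em loc. cit.} rather than a new difficulty, and the rest of the argument is formal.
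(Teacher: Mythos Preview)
Your proof is correct and takes essentially the same approach as the paper: the paper's proof consists of a single commutative diagram identifying, via the isomorphism $\mu^{-i}\colon\ao\stackrel{\sim}{\to}\ao\{i\}$, the operator $\varphi^{-1}$ on $\tau_{\leq i}\ao\{i\}$ with $\xi^i\varphi^{-1}$ on $\tau_{\leq i}\ao$ and the canonical map $\gamma$ with the map $\mu^i$ of Theorem~\ref{BMS2} --- exactly what you unwind in words, including the observation that the non-equivariance of Theorem~\ref{BMS2} is absorbed by the twist. One minor terminological slip: $\ao$ lives on $\calx_{\eet}$, not on $X_{\proet}$, so when you speak of ``the map of pro-\'etale sheaves $\wh{\Z}_p(i)\hookrightarrow\ao\{i\}$'' you presumably mean the inclusion $\wh{\Z}_p(i)\hookrightarrow\Ainf\{i\}$ on $X_{\proet}$, whose pushforward by $\R\nu_*$ then factors through $\ao\{i\}$ (as in the construction of $\mu^i$ in the proof of Theorem~\ref{BMS2}).
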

      \begin{proof}Immediate from Theorem \ref{BMS2} and the following commutative diagram (which also defines the operator  $\varphi^{-1}$  on $\tau_{\leq i}\ao\{i\}$ in the  corollary):
      $$
\xymatrix@R=.7cm{
\tau_{\leq i} \R\nu_* \wh{\Z}_p\ar[r]^{\mu^i}\ar@{=}[d] &  \tau_{\leq i}\ao\ar[r]^{1-\xi^{i}\varphi^{-1}}\ar[d]^-{\mu^{-i}}_-{\wr}&  \tau_{\leq i}\ao\ar[d]^-{\mu^{-i}}_-{\wr}\\
\tau_{\leq i} \R\nu_* \wh{\Z}_p(i)\ar[r]^{\gamma=\can} &  \tau_{\leq i}\ao\{i\}\ar[r]^{1-\varphi^{-1}}& \tau_{\leq i} \ao\{i\}
}
      $$
      \end{proof}
      
\section{$A_{\rm inf}$-symbol maps}

       Let $X$ be a smooth adic space over $C$ and let $\calx$ be a flat $p$-adic formal model of 
       $X$ over $\O_C$. Let $\nu: X_{\proet}\to \calx_{\eet}$ be the natural map.

   \subsection{The construction of  symbol maps}   We will define compatible continuous pro-\'etale and $A_{\rm inf}$-symbol maps\footnote{We refer the reader to \cite[Sec. 2.2]{CDN3} for a discussion of topology on cohomologies of rigid analytic varieties and formal schemes. Integrally, we work in the category of pro-discrete modules, rationally -- in the category of locally convex topological vector spaces over $\Q_p$. But,  in this paper, we work with the naive topology on cohomology groups, i.e., the quotient topology, as opposed to the refined cohomology groups (denoted $\wt{H}$ in \cite{CDN3}) taken in the derived category of pro-discrete modules.} 
       
       \begin{equation}
       \label{symb1}
     r_{\proet}:  \O(X)^{*,\otimes i}\to H^i_{\proet}(X,\wh{\Z}_p(i)), \quad   r_{\rm inf}: \O(X)^{*,\otimes i}\to H^i_{\eet}(\calx, \ao\{i\}).
       \end{equation}
              
       For   $i=1$,
       we will construct below compatible  maps of sheaves
         \begin{equation}
         \label{symb2}
          c^{\proet}_1:\tau_{\leq 1} (\R\nu_*{\mathbb G}_m[-1])\to \tau_{\leq 1}\wh{\Z}_p(1),\quad c^{\rm inf}_1:\tau_{\leq 1} (\R\nu_*{\mathbb G}_m[-1])\to \tau_{\leq 1}\ao\{1\}.
         \end{equation}
         Applying $H^1_{\eet}(\calx,-)$ and observing 
         that $$H^1_{\eet}(\calx, \tau_{\leq 1} (\R\nu_*{\mathbb G}_m[-1]))\stackrel{\sim}{\to}H^1_{\eet}(\calx, \R\nu_*{\mathbb G}_m[-1])=H^0_{\eet}(\calx, \R\nu_*{\mathbb G}_m)\simeq \O(X)^{*},$$
we get that the maps $c^{\proet}_1, c^{\rm inf}_1$  induce  global symbol maps 
          $$r_{\proet}: \O(X)^{*}\to H^1_{\proet}(X, \wh{\Z}_{p}(1)),\quad r_{\rm inf}: \O(X)^{*}\to H^1_{\eet}(\calx, \ao\{1\}).$$
           For $i\geq 1$, we define the symbol maps (\ref{symb1}) using cup product: $x_1\otimes\cdots\otimes 
          x_i\mapsto r_*(x_1)\cup\cdots\cup r_*(x_i).$ 
         
        The construction of the  first map  in (\ref{symb2})  uses
        the Kummer exact sequence 
          on $X_{\proet}$
         $$0\to \wh{\Z}_p(1)\to \varprojlim_{x\mapsto x^p} {\mathbb G}_m\to {\mathbb G}_m\to 0,$$
       inducing, by projection to $\calx_{\eet}$, 
         the Chern class map
         $$c_1^{\proet}: \R\nu_*{\mathbb G}_m[-1]\to \R\nu_*\wh{\Z}_p(1).$$
         
          The construction of the second map in (\ref{symb2}) uses the above Kummer exact sequence and 
  the {\it twisted} Artin-Schreier exact sequence\footnote{
  Note that, for $x\in \Ainf$,
 $$(1-\varphi^{-1})(x\{1\})=[(1-\xi \varphi^{-1})(x)]\{1\}$$
 and this is $0$ precisely when $x=\mu y$ with $y\in \wh{\Z}_p$. }
 on $X_{\proet}$
 $$0\to \wh{\Z}_p(1)\lomapr{\gamma}\Ainf\{1\}\xrightarrow{1-\varphi^{-1}} \Ainf\{1\}\to 0,
 $$
 where the map $\gamma$  is defined by $x(1)\mapsto \mu x\{1\}, x\in \wh{\Z}_p.$
 Pushing down to $\calx_{\eet}$ we obtain a map
 $$\gamma: \tau_{\leq 1} \R\nu_*\wh{\Z}_p(1)\to \tau_{\leq 1} \R\nu_*\Ainf\{1\}.$$
 On the other hand, Corollary \ref{BMS22} gives us a natural map
 $${\gamma}: \tau_{\leq 1} \R\nu_*\wh{\Z}_p(1)\to \tau_{\leq 1}\ao\{1\}.$$
 It is easy to check that the above two maps are compatible via 
 the canonical map
 $\tau_{\leq 1}\ao\{1\}\to \tau_{\leq 1} \R\nu_*\Ainf\{1\}$ and that  the composition
 $$
 \tau_{\leq 1} \R\nu_*\wh{\Z}_p(1)\lomapr{\gamma} \tau_{\leq 1}\ao\{1\}\lomapr{1-\phi^{-1}}\tau_{\leq 1}\ao\{1\}
 $$
 is the zero map.

 Now, our symbol map is simply the composition
  $$c^{\rm inf}_1:\tau_{\leq 1}\R\nu_*{\mathbb G}_m [-1]\xrightarrow{c_1^{\proet}} \tau_{\leq 1}\R\nu_*\wh{\Z}_p(1)\xrightarrow{\gamma}
  \tau_{\leq 1}\ao\{1\}.$$

  \subsection{Compatibility with the Hodge-Tate symbol map}\label{Hodge-tate}Let $\calx_{\so_K}$ be a semistable formal schemes over $\so_K$. Let $M$ be the sheaf of monoids on $\calx_{\so_K}$ defining the log-structure, $M^{\gp}$ its group completion. This log-structure is canonical, in the terminology of Berkovich \cite[2.3]{BerL}, i.e., $M(U)=\{x\in\so_{\calx_{\so_K}}(U)| x_K\in\so^{*}_{\calx_K}(U_K)\}$. This is shown in \cite[Th. 2.3.1]{BerL}, \cite[Th. 5.3]{BerN} and applies also to semistable formal schemes with self-intersections. It follows that  $M^{\gp}(U)=\so^{*}_{\calx_K}(U_K)$. Set $X_K:=\calx_{\so_K,K}, \calx:=\calx_{\so_C}, X:=X_{K,C}$.

  For $i\geq 1$, the Hodge-Tate symbol maps $$
  r_{\htt}:\quad  \so(X_K)^{*, \otimes i}\to H^0_{\eet}(\calx,\Omega^i_{\calx})
  $$
  are defined by taking cup products of the Chern class maps
  $$
  c_1^{\htt}:\quad \so(X_K)^*\to \tau_{\leq 1}(\R\nu_*{\mathbb G}_m[-1])\to \Omega_{\calx}[-1],\quad x\mapsto \dlog (x).
  $$
  The purpose of this section is to prove the following fact:
  \begin{proposition}
  \label{compatibility2}Let $i\geq 1$. 
  The symbol maps $r_{\rm inf}$ and $r_{\htt}$ are compatible under the Hodge-Tate specialization map $\iota_{\htt}$, i.e., $\iota_{\htt}r_{\rm inf}{|\so(X_K)^{*,\otimes i}}=r_{\htt}$.
  \end{proposition}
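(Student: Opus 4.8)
The plan is to reduce to the case $i=1$ by multiplicativity, and then to identify, locally on $\calx$, the Hodge-Tate specialization of the $A_{\rm inf}$-Chern class $c_1^{\rm inf}$ with the logarithmic $\dlog$-map, using the explicit description of the isomorphism $H^1(\wt{\Omega}_{\calx})\simeq\Omega^1_{\calx/\O_C}\{-1\}$ recalled in Theorem \ref{tildeomega} (and Theorem \ref{CK} in the semistable case).

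\emph{Reduction to $i=1$.} The morphism $\tilde{\iota}_{\htt}\colon\ao\to\wt{\Omega}_{\calx}$ is a morphism of commutative algebra objects of $D(\calx_{\eet})$ (the algebra structure on $\ao/\tilde{\xi}\xrightarrow{\sim}\wt{\Omega}_{\calx}$ being transported from that of $\ao$), so the induced maps on cohomology respect cup products. On the $\wt{\Omega}_{\calx}$-side the spectral sequence $E_2^{s,t}=H^s_{\eet}(\calx,H^t(\wt{\Omega}_{\calx}))\Rightarrow H^{s+t}_{\eet}(\calx,\wt{\Omega}_{\calx})$ is multiplicative, hence so are the edge maps entering the definition of $\iota_{\htt}$, and the identification $H^i(\wt{\Omega}_{\calx})\simeq\wedge^iH^1(\wt{\Omega}_{\calx})\simeq\Omega^i_{\calx/\O_C}\{-i\}$ is multiplicative by Theorem \ref{CK}(b) (resp.\ \ref{tildeomega}(b)). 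Since $r_{\rm inf}$ and $r_{\htt}$ are built as $i$-fold cup products of the degree-one classes $c_1^{\rm inf}(x)$ and $c_1^{\htt}(x)$, and since the twists $\{i\}$ and $\{-i\}$ cancel, it suffices to prove that $\iota_{\htt}(r_{\rm inf}(x))=\dlog(x)$ in $H^0_{\eet}(\calx,\Omega^1_{\calx/\O_C})$ for every $x\in\so(X_K)^{*}$.

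\emph{The case $i=1$.} It is enough to prove the sharper sheaf-level statement that, after applying $\tau_{\le 1}$ and the edge map, the composite $\tilde{\iota}_{\htt}\circ c_1^{\rm inf}$ induces on $\nu_*{\mathbb G}_m=H^1(\tau_{\le 1}\R\nu_*{\mathbb G}_m[-1])$ the $\dlog$-map $\nu_*{\mathbb G}_m\to H^1(\wt{\Omega}_{\calx})\{1\}\simeq\Omega^1_{\calx/\O_C}$ defining $c_1^{\htt}$; taking $H^0_{\eet}(\calx,-)$ then gives the proposition for $i=1$. This is local on $\calx$, so we may assume $\calx=\Spf(R)$ with $R$ small (smooth case), resp.\ admitting an \'etale map from $\O_C\{T_0,\dots,T_r,T_{r+1}^{\pm1},\dots,T_d^{\pm1}\}/(T_0\cdots T_r-p^q)$ (semistable case). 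Unwinding the definitions, $c_1^{\rm inf}=\gamma\circ c_1^{\proet}$ with $c_1^{\proet}$ the Kummer-Chern class and $\gamma$ the map of Corollary \ref{BMS22}, which (as noted in the text) agrees with the map coming from the twisted Artin-Schreier sequence under $\can$. Reducing this composite modulo $\tilde{\xi}$, using $\tilde{\theta}(\mu)=\zeta_p-1$ and the normalization $\omega=\frac{1}{\zeta_p-1}\dlog(\zeta)$, and passing to $H^1(\wt{\Omega}_{\calx})$, one sees that it becomes the morphism $\R^1\nu_*(\wh{\O}_X^+)\to H^1(\wt{\Omega}_{\calx})$ (precomposed with the $\dlog$-map of the Kummer sequence) used to construct the isomorphism of Theorem \ref{tildeomega}(a), resp.\ Theorem \ref{CK}(a) together with the division by $\zeta_p-1$ from the remark following it; by the very construction of that isomorphism this morphism is $\dlog$. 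Concretely, on $R=\O_C\{T^{\pm1}\}$ it is the formula $\alpha(dT/T)=(\gamma\mapsto 1\otimes\dlog(\zeta_\gamma))$ recalled above, where $\zeta_\gamma$ is exactly the Kummer cocycle of $T$, and the general case follows by base change along the framing $A\to R$ (in the semistable case one uses in addition the logarithmic version of this formula furnished by \cite{CK}).

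\emph{The main obstacle.} The heart of the matter is the local computation of the last paragraph, and especially its semistable variant: one must match the isomorphism $H^1(\wt{\Omega}_{\calx})\simeq\Omega^1_{\calx/\O_C}\{-1\}$, defined abstractly through (logarithmic) cotangent complexes, and in the semistable case by dividing the Bhatt-Morrow-Scholze map by $\zeta_p-1$, with the Kummer construction entering $c_1^{\rm inf}$. In the smooth case this is the identification of the BMS map with $\dlog$ composed with the Kummer boundary, which is part of \cite{BMS1}; in the semistable case one needs the analogous statement for the logarithmic $\dlog$, obtained from the cotangent-complex argument of \cite{BMS1}, \cite{CK} once one checks that the logarithmic form $\dlog(T_j)$, $0\le j\le r$, corresponds to the Kummer class of the unit $T_j\in R[1/p]^{*}$. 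A secondary point is to verify that the truncations $\tau_{\le 1}$, the edge maps and the Tate twists occurring in $\iota_{\htt}$ and in the definition of $\gamma$ via Corollary \ref{BMS22} are mutually compatible, so that the diagram chase above is legitimate.
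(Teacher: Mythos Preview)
Your overall strategy matches the paper's: reduce to $i=1$ by multiplicativity of $\tilde{\iota}_{\htt}$ and of the cup-product isomorphism $\wedge^iH^1(\wt{\Omega}_\calx)\simeq H^i(\wt{\Omega}_\calx)$, then verify the case $i=1$ locally via the explicit description of the BMS/CK isomorphism recalled after Theorem~\ref{tildeomega}.

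The organization of the $i=1$ step is genuinely different, though. You attempt to unwind the composite $\tilde{\iota}_{\htt}\circ\gamma\circ c_1^{\proet}$ directly and recognize the result as the map ``used to construct'' the isomorphism of Theorem~\ref{tildeomega}(a). The paper does not do this; instead it brings in Scholze's generic-fiber isomorphism $\alpha_2:\R^1\nu_{X,*}\widehat{\O}(1)\simeq\Omega^1_X$, which is \emph{characterized} by the property that its inverse makes the Kummer map and $\dlog$ compatible. This makes one half of the argument trivial: the composite $\O_X^*\to\R^1\nu_*\wh{\Z}_p(1)\to\R^1\nu_*\Ainf\{1\}\to\R^1\nu_*\widehat{\O}^+\{1\}\to\R^1\nu_*\widehat{\O}(1)\xrightarrow{\alpha_2}\varepsilon_*\Omega^1_X$ is $\dlog$ essentially by definition of $\alpha_2$, once one checks the map $\wh{\Z}_p(1)\to\widehat{\O}(1)$ is the obvious one. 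The remaining work is then a pure compatibility statement between two isomorphisms, namely that $\alpha_1$ (the BMS/CK map) and $\alpha_2$ agree under the natural inclusion $\Omega^1_{\calx}\hookrightarrow\varepsilon_*\Omega^1_X$; this is checked on the smooth locus, reduced by base change to $R=\O_C\{T^{\pm1}\}$, and verified with the explicit formula for $\alpha$ --- the same computation you invoke at the end.

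Both approaches land on the same local calculation; the paper's pivot through $\alpha_2$ buys a clean separation between ``Kummer vs.\ $\dlog$'' (free, by characterization of $\alpha_2$) and ``$\alpha_1$ vs.\ $\alpha_2$'' (the explicit check), whereas your direct route requires tracking a class through $\gamma$, $\tilde\theta$, the $\LL\eta_{\zeta_p-1}$ quotient, and the identification $H^1(\wt{\Omega}_R)\simeq H^1(\Gamma,R_\infty)/[\zeta_p-1]$ all at once --- correct, but with more bookkeeping to make rigorous, as you yourself flag in your ``main obstacle'' paragraph.
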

  \begin{proof}
  {\em The case $i=1$.}    
      Consider  the composition $\iota_{\htt}c_1^{\rm inf}$:
             $$\O(X_K)^*\to \O(X)^*\simeq H^1_{\eet}(\calx, \tau_{\leq 1} (\R\nu_*{\mathbb G}_m[-1]))\lomapr{c_1^{\rm inf}}H^1_{\eet}(\calx, \ao\{1\})\lomapr{\iota_{\htt}} H^0_{\eet}(\calx, \Omega^1_{\calx}).$$
   We need to show that 
   \begin{lemma}
   \label{compatibility1}
    The above  composition $\iota_{\htt}c_1^{\rm inf}$  is equal to the map $c_1^{\htt}={\rm dlog}: \O(X_K)^*\to H^0_{\eet}(\calx, \Omega^1_{\calx})$.
   \end{lemma}
   \begin{proof} From the definitions, it suffices to check that 
the composition 
 $$\O_X^{*}\lomapr{} \R^1\nu_*\wh{\Z}_p(1)\lomapr{\gamma} H^1(\ao\{1\})\lomapr{\tilde{\theta}} H^1(\wt{\Omega}_{\calx}\{1\})\simeq \Omega^1_{\calx}$$ is
 the map ${\rm dlog}$. To do this we 
 will study the diagram

      $$
 \xymatrix{
 & & \frac{\R^1\nu_*\Ainf}{\R^1\nu_*\Ainf[\mu]}\{1\} \ar[r]^{\tilde{\theta}}\ar[d]^{\wr}_{\gamma_1}&  \frac{\R^1\nu_*\widehat{\O}^+}{\R^1\nu_*\widehat{\O}^+[\zeta_p-1]}\{1\} \ar[d]^{\wr}_{\gamma_2} & \\
\O_X^{*}\ar[r]^-{c_1^{\proet}} & \R^1\nu_*\wh{\Z}_p(1) \ar[r]^-{\gamma} \ar[rd]^{\beta} & H^1(\ao\{1\}) \ar[r]^{\tilde{\theta}} \ar[d]^{\rm can} & H^1(\wt{\Omega}_{\calx}\{1\}) \ar[r]^-{\sim}_-{\alpha_1} \ar[d]^{\rm can} & \Omega^1_{\calx} \ar[rd]^{\iota} \\
& & \R^1\nu_*\Ainf\{1\} \ar[r]^{\tilde{\theta}} & \R^1\nu_*\widehat{\O}^+\{1\} \ar[r] & \R^1\nu_*\widehat{\O}(1) \ar[r]^-{ \sim}_-{\alpha_2} & \varepsilon_*\Omega^1_X
 }
 $$
  Here $\varepsilon: X_{\eet}\to \calx_{\eet}$ is the canonical map. 
   A few words about this diagram:

   $\bullet$ the map $ \R^1\nu_*\widehat{\O}^+\{1\}\to 
\R^1\nu_*\widehat{\O}(1)$ is induced by the natural map $\widehat{\O}^+\to \widehat{\O}$ and the map
$\O_C\{1\}\to C(1)$ is given by $x\{1\}\to \frac{x}{\zeta_p-1}(1)$. 

   $\bullet$ let $\nu_X: X_{\proet}\to X_{\eet}$ be the canonical projection. There is an isomorphism
   \begin{equation}
   \label{Scholze1}
   \alpha_2:\R^1\nu_{X,*}\widehat{\O}(1)\stackrel{\sim}{\to} \Omega^1_X
   \end{equation}
    due to Scholze (\cite[Lemma 3.24]{Surv}). It is  uniquely characterized by the property that its inverse 
   is the unique $\O_X$-linear map $\alpha^{-1}_2: \Omega^1_X\to \R^1\nu_{X,*}\widehat{\O}(1)$ making the following diagram commute
   \begin{equation}
   \label{diag22}
   \xymatrix@R=.6cm{
     \O_X^{*} \ar[r]^-{c_1^{\proet}} \ar[d]_{\rm dlog} &  \R^1\nu_{X,*}\wh{\Z}_p(1)  \ar[d]\\
            \Omega^1_{X}\ar[r]^-{\alpha^{-1}_2} & \R^1\nu_{X,*}\widehat{\O}(1)
      },
      \end{equation}
      the right vertical map being the obvious one.
      
      $\bullet$ The map $\alpha_2:  \R^1\nu_*\wh{\so}(1)\to \varepsilon_*\Omega^1_X$ is defined as the composition
      \begin{equation}
      \label{lyon2}
      \R^1\nu_*\wh{\so}(1)\simeq  \R^1(\varepsilon \nu_X)_*\wh{\so}(1)\stackrel{\sim}{\to}\varepsilon_*\R^1\nu_{X,*}\wh{\so}(1)\simeq \varepsilon_*\Omega^1_X,
      \end{equation}
      where the last map is the isomorphism (\ref{Scholze1}). Recall that this last  isomorphism generalizes to isomorphisms  \cite[Prop. 3.23]{Surv}:
      $$
      \R^i\nu_{X,*}\widehat{\O}(i)\simeq \Omega^i_X, \quad i\geq 0.
      $$
      This combined with Tate's acyclicity theorem allows us to infer that the second map in (\ref{lyon2}) is an isomorphism.

      We claim that the composite 
      $$ \O_X^{*}\to \R^1\nu_{X,*}\wh{\Z}_p(1) \to \R^1\nu_{X,*}\Ainf\{1\} \to \R^1\nu_{X,*}\widehat{\O}^+\{1\}\to 
      \R^1\nu_{X,*}\widehat{\O}(1)\to  \Omega^1_X$$
      is the ${\rm dlog}$ map. Using the characterization of Scholze's isomorphism (\ref{Scholze1}), this comes down to checking that the map $\R^1\nu_{X,*}\wh{\Z}_p(1) \to \R^1\nu_{X,*}\Ainf\{1\} \to \R^1\nu_{X,*}\widehat{\O}^+\{1\}\to 
      \R^1\nu_{X,*}\widehat{\O}(1)$ is the obvious one. But, by construction, this map  is induced by the map
      $$\wh{\Z}_p(1)\to \Ainf\{1\}\to \widehat{\O}^+\{1\}\to \widehat{\O}(1)$$
      sending $x(1)$ to $x\mu\{1\}$, then to $\tilde{\theta}(x\mu\{1\})=x(\zeta_p-1)\{1\}$, then to 
      $x(1)$, as wanted.
      
      To finish the proof of the lemma, it remains to check that the above big diagram commutes. The only nonobvious commutativity is that of 
the right-bottom trapezoid, i.e., we need to check the compatibility of the maps $\alpha_1$ and $\alpha_2$. Call 
       
       \begin{equation}
       \label{rho1}\rho: \Omega^1_{\calx} \lomapr{\alpha_1^{-1}} H^1(\wt{\Omega}_{\mathfrak{X}}\{1\}) \xrightarrow{\rm can}
      \R^1\nu_*\widehat{\O}^+\{1\}\to  \R^1\nu_*\widehat{\O}(1)\lomapr{\alpha_2} \varepsilon_*\Omega^1_X .
      \end{equation}
    We want to show that $\rho=\iota$. It suffices to check this on the smooth locus of $\calx$, which reduces us to the case when $\calx$ is smooth. Note that the maps $\rho, \iota$ are $\so_{\calx}$-linear. This is clear for $\iota$; for $\rho$ we look at the individual maps in the composition (\ref{rho1}) that defines it: the second and the third map are clearly $\so_{\calx}$-linear, for the first map we use Theorem \ref{CK}, and for the last map we use the discussion above.
    Now, the claim that $\rho=\iota$ is  local, so we way assume that $\calx$ is associated to a small algebra $R$ with a framing $A=\O_C\{T_i^{\pm 1}\}\to R$. By functoriality, we may  reduce to the case when $R=A$ and  $A=\O_C\{T^{\pm 1}\}$. 
    
    Now, the desired compatibility follows from the very construction of the  isomorphism $\alpha_1$. More precisely, 
     since ${\rm can}\circ \gamma_2$ is the multiplication by $\zeta_p-1$, we have 
     $$(\zeta_p-1)\gamma_2^{-1}(\alpha_1^{-1}(dT/T))={\rm can}(\alpha_1^{-1}(dT/T)).$$
     As we have already seen (cf. discussion after Theorem \ref{tildeomega}) this corresponds to the element $(\gamma\mapsto (\zeta_p-1)\otimes \frac{1}{\zeta_p-1}\dlog(\zeta_{\gamma}))$ in $(\zeta_p-1)H^1(\Gamma, A_{\infty})\{1\}$. Now the compatibility of the map $\alpha_2$ with the Kummer map (see the diagram (\ref{diag22})) shows that $\rho(dT/T)=dT/T$, as wanted.
            \end{proof}

{\em The case $i\geq 1$.}   
Take now  the symbol maps
     $$ r_{\rm inf}: \O(X)^{*, \otimes i}\to H^i_{\eet}(\calx, \ao\{i\})$$
and consider  the composition $\iota_{\htt}r_{\rm inf}$:
     $$\O(X)^{*, \otimes i}\to H^i_{\eet}(\calx, \ao\{i\})\to H^i_{\eet}(\calx, \wt{\Omega}_{\calx}\{i\})\to H^0_{\eet}(\calx, H^i( \wt{\Omega}_{\calx}\{i\}))\simeq
     H^0_{\eet}(\calx, \Omega^{i}_{\calx}).$$
     To finish the proof of our proposition, in view of Lemma \ref{compatibility1},  it suffices to check that this composition is compatible with products. But, the third map in the composition is clearly compatible with products and  the first map is compatible with products by  definition.  
    The second  map is induced by the map $\tilde{\theta}: \ao\to \wt{\Omega}_{\calx}$ hence it is also compatible with products. Finally,
    the last map is the isomorphism given by Theorem \ref{CK} hence is compatible with products by its very definition. 
    \end{proof}     
  
  \section{The $A_{\rm inf}$-cohomology of   Drinfeld symmetric spaces}\label{drinfeld1}
     
    Let $\mathcal{H}=\mathbb{P}((K^{d+1})^*)\simeq\mathbb{P}^d(K)$ be the space of $K$-rational hyperplanes in $K^{d+1}$. Let 
          $${\mathbb H}^d_K:={\mathbb P}^d_K\setminus \cup_{H\in \mathcal{H}} H$$ 
          be the Drinfeld symmetric space of dimension $d$. It is a rigid analytic space. Let 
      $\calx_{\O_K}$ be the standard semistable   formal model over $\O_K$ of $\mathbb{H}_K^d$ (see \cite[Section 6.1]{GKF}). Let $\calx:=\calx_{\O_K}\widehat{\otimes}_{\O_K} \O_C$,  let $X:= \mathbb{H}_K^d\hat{\otimes}_{K} C$ be the rigid analytic generic fiber of $\calx$, and let $X_K=\mathbb{H}_K^d$. The group $G={\rm GL}_{d+1}(K)$ acts naturally on all these objects.

The main goal of this section is to prove the following (here and elsewhere in the paper, the completed tensor product is taken in the category of pro-dicrete modules):      
      \begin{theorem}\label{main}Let $i\geq 0$. 
                 There is a $G\times \sg_K$-equivariant isomorphism of topological $A_{\rm inf}$-modules
                 $$ A_{\rm inf}\widehat{\otimes}_{\zp} {\rm Sp}_i(\zp)^*\simeq H^i_{\eet}(\calx, \ao\{i\}),$$      
                 where ${\rm Sp}_i(\zp)^*$ is the $\zp$-dual of a generalized Steinberg representation (see Section \ref{Steinberg} for a definition). This isomorphism is compatible with the operator $\phi^{-1}$.
                                  \end{theorem}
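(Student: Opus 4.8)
The plan is to construct the isomorphism via the $A_{\rm inf}$-symbol map $r_{\rm inf}$ from Section 3 and prove it is an isomorphism using derived Nakayama, exactly as sketched in the introduction. First I would recall from Section \ref{Steinberg} (and \cite{SS}, \cite{IS}) the presentation of ${\rm Sp}_i(\zp)^*$ as an explicit quotient of the module of $\zp$-valued measures on $\mathcal{H}^{i+1}$, so that the symbol map $r_{\rm inf}\colon \O(X)^{*,\otimes i}\to H^i_{\eet}(\calx,\ao\{i\})$ — restricted to the symbols coming from the coordinate linear forms defining the hyperplanes $H\in\mathcal{H}$ — descends to a $G\times\sg_K$-equivariant, $\phi^{-1}$-equivariant, $A_{\rm inf}$-linear map
$$
r_{\rm inf}\colon A_{\rm inf}\wotimes_{\zp}{\rm Sp}_i(\zp)^*\to H^i_{\eet}(\calx,\ao\{i\}).
$$
Here one uses that $r_{\rm inf}$ is compatible with cup products and that the relations in the Steinberg presentation (Steinberg relations among the $\dlog$ of linear forms on ${\mathbb P}^d$) are killed in étale cohomology of ${\mathbb H}^d$, which is already known rationally from \cite{CDN3} and holds integrally because the target will turn out to be torsion-free. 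Both source and target are derived $\tilde\xi$-complete (the source obviously, the target by Theorem \ref{dRspec}(3) applied to $\rg_{\eet}(\calx,\ao)$ together with property (3) of derived completions), so by the derived Nakayama Lemma it suffices to show $r_{\rm inf}\otimes^{\rm L}\id_{A_{\rm inf}/\tilde\xi}$ is a quasi-isomorphism.

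Next I would compute the derived reduction mod $\tilde\xi$. Using the Hodge-Tate specialization $\ao/\tilde\xi\simeq\wt\Omega_{\calx}$ (Theorem \ref{dRspec}(1)) and the identification $H^j(\wt\Omega_{\calx})\simeq\Omega^j_{\calx/\O_C}\{-j\}$ (Theorem \ref{CK}), the acyclicity result (\ref{GK00}) of Grosse-Klönne forces $\rg_{\eet}(\calx,\wt\Omega_{\calx})$ to have cohomology concentrated so that $H^i_{\eet}(\calx,\ao\{i\})\otimes^{\rm L}_{A_{\rm inf}}A_{\rm inf}/\tilde\xi$ computes, in the relevant degree, precisely $H^0_{\eet}(\calx,\Omega^i_{\calx/\O_C})$ — and in particular, by chasing the Bockstein/naive-reduction comparison (property (2) after the décalage discussion), $H^i_{\eet}(\calx,\ao\{i\})$ is $\tilde\xi$-torsion free, so the derived and naive reductions agree. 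Then, using Proposition \ref{compatibility2} (compatibility of $r_{\rm inf}$ with the Hodge-Tate symbol map $r_{\htt}$ under $\iota_{\htt}$), the reduced map $\overline r_{\rm inf}$ is identified with the Hodge-Tate regulator
$$
r_{\htt}\colon \so_C\wotimes_{\zp}{\rm Sp}_i(\zp)^*\to H^0_{\eet}(\calx,\Omega^i_{\calx/\O_C}),
$$
which was proved to be an isomorphism in \cite{CDN3}. (One must also check here that $A_{\rm inf}/\tilde\xi\simeq\O_C$ turns the source $A_{\rm inf}\wotimes_{\zp}{\rm Sp}_i(\zp)^*$ into $\so_C\wotimes_{\zp}{\rm Sp}_i(\zp)^*$, which is clear, and in particular the source is $\tilde\xi$-torsion free.) Since both sides of $r_{\rm inf}$ are $\tilde\xi$-torsion free, $r_{\rm inf}\otimes^{\rm L}\id\simeq\overline r_{\rm inf}$, which is an isomorphism; derived Nakayama then gives that $r_{\rm inf}$ itself is an isomorphism. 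Finally $\phi^{-1}$-equivariance and $G\times\sg_K$-equivariance are built into the construction of $r_{\rm inf}$, and the vanishing for $i>d$ follows since ${\rm Sp}_i(\zp)=0$ there.

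The main obstacle, I expect, is the first step: making $r_{\rm inf}$ land where one wants and descend to the Steinberg quotient \emph{integrally}, i.e. controlling torsion. Rationally everything is in \cite{CDN3}, but integrally one needs (i) that the symbols of linear forms satisfy the defining Steinberg relations already at the level of $H^i_{\eet}(\calx,\ao\{i\})$ — this should follow from the analogous relations in $H^i_{\proet}(X,\wh\Z_p(i))$ via the symbol-map compatibility, reduced in turn to the classical fact that products of $\dlog$'s of linearly dependent forms vanish — and (ii) a priori control of the target to know the map is well-defined before we know it's an isomorphism; the cleanest route is to establish $\tilde\xi$-torsion-freeness of $H^i_{\eet}(\calx,\ao\{i\})$ \emph{first}, directly from (\ref{GK00}) and the Bockstein description of $\ao/\tilde\xi$, and only then run the argument above. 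A secondary subtlety is keeping track of topologies: one works with the naive (quotient) topology on cohomology, and one should check the completed tensor product $A_{\rm inf}\wotimes_{\zp}{\rm Sp}_i(\zp)^*$ in pro-discrete modules interacts correctly with derived $\tilde\xi$-completeness, which is routine given that ${\rm Sp}_i(\zp)^*$ is a profinite $\zp$-module.
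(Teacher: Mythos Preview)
Your proposal is correct and follows essentially the same approach as the paper: construct $r_{\rm inf}$, check derived $\tilde\xi$-completeness of source and target, reduce mod $\tilde\xi$ via the Hodge-Tate specialization and Grosse-Kl\"onne's acyclicity to identify $\overline r_{\rm inf}$ with the Hodge-Tate regulator $r_{\htt}$ (Proposition~\ref{compatibility2} and Corollary~\ref{integraldR}), verify $\tilde\xi$-torsion-freeness so that derived and naive reductions coincide, and conclude by derived Nakayama. The one place where the paper differs from your sketch is the descent of the regulator to the Steinberg quotient: rather than appealing to torsion-freeness of the target to reduce to the rational result, the paper establishes the factorization directly for $r_{\eet}$ in $H^i_{\eet}(X,\Z_p(i))$ by checking the cocycle and Steinberg relations in Milnor $K$-theory (Proposition~\ref{bursztyn1}), and then defines $r_{\rm inf}=\gamma\circ r_{\eet}$, avoiding any circularity.
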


    \subsection{Generalized Steinberg representations and their duals} \label{Steinberg}
       \subsubsection{Generalized Steinberg representations.}
     Let $B$ be the upper triangular Borel subgroup of $G$ and 
   $\Delta=\{1,2,\ldots ,d\}$, identified with the set of simple roots associated to $B$.
      For each subset 
   $J$ of $\Delta$ we let
   $P_J$ be the corresponding parabolic subgroup of $G$ and set $X_J=G/P_J$, a compact topological space. 
    
    If $A$ is an abelian group and $J\subset \Delta$, let 
    $${\rm Sp}_J(A)=\frac{{\rm LC}(X_J, A)}{\sum_{i\in \Delta\setminus J} {\rm LC}(X_{J\cup \{i\}}, A)},$$
       where ${\rm LC}$ means locally constant (automatically with compact support). This is 
   a smooth $G$-module over $A$ and we have a canonical isomorphism 
   ${\rm Sp}_J(A)\simeq{\rm Sp}_J(\mathbf{Z})\otimes A$. For $J=\emptyset$ we obtain the usual Steinberg representation with coefficients in $A$, while for $J=\Delta$ we have ${\rm Sp}_J(A)=A$. For $r\in \{0,1,\ldots,d\}$ we use the simpler notation 
        $${\rm Sp}_r:={\rm Sp}_{\{1,2,\ldots ,d-r\}}.$$
             
             We will need the following result: 
        
  \begin{theorem}{\rm (Grosse-Kl\"onne, \cite[Cor. 4.3]{GKir})}\label{GK irred}
   If $A$ is a field of characteristic $p$ then 
 ${\rm Sp}_J(A)$ (for varying~$J$) are the irreducible constituents of ${\rm LC}(G/B, A)$, each occurring with multiplicity $1$.    
  \end{theorem}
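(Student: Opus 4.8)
The plan is to reduce the statement to one soft combinatorial fact and one genuinely hard representation–theoretic input, the latter being the real obstacle. Fix notation: for $J\subseteq\Delta$ write $V_J:={\rm LC}(X_J,A)$, viewed via pullback along $G/B\twoheadrightarrow G/P_J$ as a $G$-submodule of $V_\emptyset={\rm LC}(G/B,A)$; then $V_{J'}\subseteq V_J$ for $J\subseteq J'$, and $V_{J_1}\cap V_{J_2}=V_{J_1\cup J_2}$ inside $V_\emptyset$ (a locally constant function on $G/B$ descends to $G/P_{J_1}$ and to $G/P_{J_2}$ precisely when it is right–invariant under the subgroup of $G$ generated by $P_{J_1}$ and $P_{J_2}$, namely $P_{J_1\cup J_2}$). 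By definition ${\rm Sp}_J(A)=V_J/\sum_{i\in\Delta\setminus J}V_{J\cup\{i\}}$.

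The first step is combinatorial: for $A$ a field there is a finite $G$-equivariant filtration of ${\rm LC}(G/B,A)$ whose successive quotients are exactly the $2^{|\Delta|}$ modules ${\rm Sp}_J(A)$, $J\subseteq\Delta$, each appearing once (in particular $[{\rm LC}(G/B,A)]=\sum_{J}[{\rm Sp}_J(A)]$ in the Grothendieck group). This is classical — it is the characteristic-$p$ shadow of the decomposition ${\rm Ind}_B^G\mathbf 1\simeq\bigoplus_J{\rm Sp}_J$ over a field of characteristic $0$ — and follows from iterating the Solomon–Tits acyclicity of the spherical building of $G$ and of its standard Levi subgroups (equivalently, from the inclusion–exclusion bookkeeping of the submodules $V_J$); it is worked out in the form we need in \cite{SS} and \cite{IS}. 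Over characteristic $0$ this filtration splits; over a field of characteristic $p$ it need not, but the set of constituents is unchanged. The upshot: once each ${\rm Sp}_J(A)$ is known to be irreducible, the filtration is automatically a composition series, so ${\rm LC}(G/B,A)$ has finite length with constituents exactly the ${\rm Sp}_J(A)$.

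The main step, and the one I expect to be the real obstacle, is the irreducibility of each ${\rm Sp}_J(A)$ for $A$ a field of characteristic $p$; this is the substance of \cite{GKir} and I do not expect a short argument. (It suffices to treat $A$ algebraically closed, since base change along a field extension is exact and commutes with the formation of ${\rm Sp}_J$, so carries a proper nonzero submodule to a proper nonzero submodule.) The route I would take is Große-Klönne's: present ${\rm Sp}_J(A)$ as a cyclic $G$-module generated by the image of a single vector fixed by the maximal compact subgroup ${\rm GL}_{d+1}(\so_K)$ — the characteristic function of an open orbit of that subgroup on $X_J$ — then show it has simple socle and simple cosocle (for instance by transporting the question to the pro-$p$ Iwahori–Hecke module $({\rm Sp}_J(A))^{I(1)}$ and invoking the combinatorics of the affine Hecke algebra in defining characteristic), and finally that socle and cosocle coincide, via the self–duality of ${\rm Sp}_J(A)$ reflecting the symmetry of the Bruhat–Tits building. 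An alternative is to bootstrap from $J=\emptyset$ (irreducibility of the mod $p$ Steinberg of ${\rm GL}_{d+1}(K)$) by realizing ${\rm Sp}_J(A)$ via parabolic induction from the Steinberg of the Levi $M_J$ of $P_J$ and controlling that induction. Either way the core difficulty is a genuine characteristic-$p$ irreducibility statement, for which the $A_{\rm inf}$-machinery of the rest of the paper gives no shortcut.

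Finally I would check that the ${\rm Sp}_J(A)$, $J\subseteq\Delta$, are pairwise non-isomorphic — routine once irreducibility is in hand. For $J\neq J'$ the isomorphism classes of the Jacquet modules $r_{M_{J''}}({\rm Sp}_J(A))$ along the standard parabolics $P_{J''}$ (equivalently, by second adjunction, the spaces $\Hom_G(V_{J''},{\rm Sp}_J(A))$) are invariants of ${\rm Sp}_J(A)$, and a direct computation shows that together they recover the subset $J$; hence ${\rm Sp}_J(A)\not\simeq{\rm Sp}_{J'}(A)$. Combining the three steps, the filtration of the second paragraph becomes a composition series with pairwise non-isomorphic simple quotients, so the irreducible constituents of ${\rm LC}(G/B,A)$ are exactly the ${\rm Sp}_J(A)$, $J\subseteq\Delta$, each with multiplicity one. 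The sole serious obstacle is the irreducibility input of the third paragraph.
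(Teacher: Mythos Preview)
The paper does not prove this theorem: it is stated as a result of Grosse-Kl\"onne and attributed to \cite[Cor.~4.3]{GKir}, with no argument given. There is therefore nothing in the paper to compare your proposal against.

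That said, your outline is a sensible reconstruction of how such a statement is established. The filtration step (decomposing $[{\rm LC}(G/B,A)]=\sum_J[{\rm Sp}_J(A)]$ in the Grothendieck group via the lattice of the $V_J$) is standard and works over any coefficient ring; the pairwise non-isomorphism is a routine bookkeeping check. You are right that the only substantive input is the irreducibility of each ${\rm Sp}_J(A)$ in characteristic $p$, and that this is precisely the content of Grosse-Kl\"onne's paper \cite{GKir}; neither the present paper nor its $A_{\rm inf}$-techniques say anything about it. So your proposal is not so much a proof as an accurate identification of what needs to be cited and why --- which is exactly what the paper itself does.
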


    \subsubsection{Duals of generalized Steinberg representations.}       \label{duals}
          
         If $\Lambda$ is a topological ring, then ${\rm Sp}_J(\Lambda)$ has a natural topology: the space 
         $X_J$ being profinite, we can write $X_J=\varprojlim_{n} X_{n,J}$ for  finite sets 
         $X_{n,J}$ and then 
          ${\rm LC}(X_J, \Lambda)=\varinjlim_{n} {\rm LC}(X_{n,J}, \Lambda)$, each 
          ${\rm LC}(X_{n,J}, \Lambda)$ being a finite free $\Lambda$-module endowed with the natural topology.       
    
                    Let $M^*:={\rm Hom}_{\rm cont}(M,\Lambda)$ for any topological $\Lambda$-module $M$, and equip $M^*$ with the weak topology.
          Then ${\rm Sp}_J(\Lambda)^*$ is naturally isomorphic to $\varprojlim_{n} {\rm LC}(X_{n,J}, \Lambda)^*$, i.e. a countable inverse limit of finite free $\Lambda$-modules.
          In particular, suppose that $L$ is a finite extension of $\Q_p$. Then 
        ${\rm Sp}_J(\so_L)^*$ is a compact $\so_L$-module, which is torsion-free.

     If $S$ is a profinite set and $A$ an abelian group, let $$D(S,A)={\rm Hom}({\rm LC}(S,\mathbf{Z}), A)={\rm LC}(S,A)^*$$ be the space of $A$-valued locally constant distributions on 
         $S$. We recall the interpretation of ${\rm Sp}_i(\zp)^*$ in terms of distributions. 
    Recall that $\mathcal{H}$ denotes the compact space of $K$-rational hyperplanes in $K^{d+1}$.  
    If $H\in \mathcal{H}$, let $\ell_H$ be a unimodular equation for $H$ (thus $\ell_H$ is a linear form with 
    integer coefficients, at least one of them being a unit).   
     Let ${\rm LC}^{c}(\mathcal{H}^{i+1}, \mathbf{Z})$ be the space of locally constant functions 
       $f: \mathcal{H}^{i+1}\to \mathbf{Z}$ such that, for all $H_0,...,H_{i+1}\in \mathcal{H}$,
       $$f(H_1,...,H_{i+1})-f(H_0, H_2,..., H_{i+1})+\cdots +(-1)^{i+1}f(H_0,...,H_{i})=0$$
       and, moreover, if 
       $\ell_{H_j}$, $0\leq j\leq i$, are linearly dependent, then     
       $f(H_0,...,H_i)=0$. 
     The work of Schneider-Stuhler \cite{SS} gives a $G$-equivariant isomorphism
 $${\rm Sp}_i(\mathbf{Z})\simeq {\rm LC}^{c}(\mathcal{H}^{i+1}, \mathbf{Z}).$$
     It follows that  the inclusion ${\rm LC}^{c}(\mathcal{H}^{i+1}, \mathbf{Z})\subset {\rm LC}(\mathcal{H}^{i+1}, \mathbf{Z})$ gives rise to a strict exact sequence
  \begin{equation}
  \label{strict1}0\to D(\mathcal{H}^{i+1}, A)_{\rm deg}\to D(\mathcal{H}^{i+1}, A)\to {\rm Hom}({\rm Sp}_i(\mathbf{Z}), A)\to 0,
  \end{equation}
        where  $D(\mathcal{H}^{i+1}, A)_{\rm deg}$ is the space of degenerate distributions (which is defined via the exact sequence above).

   \subsection{Integral de Rham cohomology of Drinfeld symmetric spaces}

   Recall the following 
      acyclicity result of Grosse-Kl\"onne, which played a crucial role in \cite{CDN3}. 
 
 \begin{theorem}{\rm (Grosse-Kl\"onne,  \cite[Th. 4.5]{GK1}, \cite[Prop. 4.5]{GK2})}\label{GK} For $i>0$,   $j\geq 0$,
  we have $H^i_{\eet}(\calx, \Omega^j_{\calx})=0$ and 
  $d=0$ on $H^0_{\eet}(\calx, \Omega^j_{\calx})$. In particular, we have a natural quasi-isomorphism
  $$ \rg_{\dr}(\calx)\simeq\R\Gamma_{\eet}(\calx, \Omega^{\bullet}_{\calx})\simeq \bigoplus_{i\geq 0} \Gamma_{\eet}(\calx, \Omega^i_{\calx})[-i].$$
 \end{theorem}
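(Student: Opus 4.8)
The plan is to isolate two essential assertions --- (1) $H^i_{\eet}(\calx,\Omega^j_\calx)=0$ for $i>0$ and all $j\geq 0$, and (2) the map induced by the de Rham differential $d$ on $H^0_{\eet}(\calx,\Omega^j_\calx)$ vanishes --- and then to establish them via the explicit geometry of Drinfeld's model. The ``in particular'' clause is then formal: by (1) each $\Omega^j_\calx$ is $\Gamma_{\eet}$-acyclic, so $\R\Gamma_{\eet}(\calx,\Omega^\bullet_\calx)$ is represented by the ordinary complex of global sections $[\,H^0_{\eet}(\calx,\Omega^0_\calx)\to H^0_{\eet}(\calx,\Omega^1_\calx)\to\cdots\,]$, whose differentials are the maps in (2); if these all vanish the complex is literally $\bigoplus_{j\geq0}\Gamma_{\eet}(\calx,\Omega^j_\calx)[-j]$, which gives the asserted quasi-isomorphism and the degeneration of the Hodge--de Rham spectral sequence. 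For $j=0$ statement (2) is trivial since $H^0_{\eet}(\calx,\so_\calx)=\so_C$ (the special fiber is connected with proper irreducible components, so has only constant global functions, and this lifts $p$-adically).

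For (2) with $j\geq1$, I would use the explicit description of global logarithmic $j$-forms on Drinfeld's space, going back to Schneider--Stuhler \cite{SS} and made integral in \cite{CDN3}: the $\so_C$-module $H^0_{\eet}(\calx,\Omega^j_\calx)$ is topologically generated by the symbols $\dlog\ell_{H_1}\wedge\cdots\wedge\dlog\ell_{H_j}$, where $H_1,\dots,H_j$ run over $K$-rational hyperplanes and $\ell_{H_k}$ is a unimodular linear form cutting out $H_k$. Each $\dlog\ell_{H_k}$ is a closed $1$-form, hence a wedge of them is closed, so $d$ annihilates every such symbol and, by continuity, all of $H^0_{\eet}(\calx,\Omega^j_\calx)$.

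For (1), the plan proceeds in two steps. First, a $p$-adic d\'evissage reduces the statement to the special fiber: $\calx$ is $p$-adic and $\Omega^j_\calx$ is an $\so_C$-flat, $p$-adically complete sheaf; writing $\calx_n:=\calx\otimes_{\so_C}\so_C/p^{n+1}$, the sheaf $\Omega^j_{\calx_n}$ has successive graded pieces for the $p$-adic filtration all isomorphic to $\Omega^j_Y$, the logarithmic differentials on the (log) special fiber $Y$, and the Milnor exact sequences then show $H^{>0}_{\eet}(\calx,\Omega^j_\calx)=0$ provided $H^{>0}(Y,\Omega^j_Y)=0$ --- the $R^1\invlim$ contribution being harmless since the transition maps on the $H^0$ are already surjective. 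Second, one proves $H^{>0}(Y,\Omega^j_Y)=0$ using the explicit structure of Drinfeld's model: $Y$ is a reduced strict normal crossings $\kbar$-variety whose irreducible components $Y_v$ are indexed by the vertices $v$ of the Bruhat--Tits building $\mathcal{BT}=\mathcal{BT}({\rm PGL}_{d+1}(K))$ (each $Y_v$ an iterated blow-up of $\mathbb{P}^d_{\kbar}$ along $\kbar$-rational linear subspaces) and whose incidence (dual) complex is precisely $\mathcal{BT}$, a contractible Euclidean building. The residue (weight) spectral sequence computing $H^*(Y,\Omega^j_Y)$ from the logarithmic cohomology of the closed strata $Y_S$ (indexed by simplices $S$ of $\mathcal{BT}$) has $E_1$-differential equal to the simplicial coboundary of $\mathcal{BT}$; the individual strata contributions are ``combinatorial'', read off from the toric/blow-up geometry of the $Y_S$, and feeding in the contractibility of $\mathcal{BT}$ and of the relevant stars and links forces $H^q=0$ for $q>0$.

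The hard part is exactly this last combinatorial identification: matching the residue spectral sequence of $Y$ with the simplicial cohomology of $\mathcal{BT}$ while keeping precise track of the numerous strata contributions and of the logarithmic poles along the remaining components. This needs an explicit handle on Drinfeld's iterated-blow-up construction and its toric local models, and is the real content of Grosse-Kl\"onne's proof \cite{GK1,GK2} (an induction on the blow-up tower). A secondary, softer point is the bookkeeping imposed by the non-quasi-compactness of $\calx$ and $Y$ --- passing to an exhaustion by quasi-compact opens and verifying the Mittag--Leffler condition / $R^1\invlim$-vanishing --- but this follows for free from surjectivity of the relevant transition maps.
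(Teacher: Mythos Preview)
The paper does not prove this theorem: it is stated as a result of Grosse-Kl\"onne, cited to \cite{GK1,GK2}, and used as a black box. There is thus no proof in the paper to compare your proposal against; your sketch is being measured against Grosse-Kl\"onne's original arguments.

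As an outline of that strategy your proposal is broadly accurate, and you correctly flag that the substantive content (the residue/weight spectral sequence over the strata indexed by simplices of $\mathcal{BT}$, combined with the explicit toric/blow-up description of the components) lives in \cite{GK1,GK2} and is not being reproduced. The $p$-adic d\'evissage reducing (1) to the special fiber is fine, and the ``in particular'' deduction is formal as you say.

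There is, however, a genuine circularity in your argument for (2). You invoke the fact that $H^0_{\eet}(\calx,\Omega^j_\calx)$ is topologically generated by the symbols $\dlog\ell_{H_1}\wedge\cdots\wedge\dlog\ell_{H_j}$, citing \cite{CDN3} for the integral statement. But in the present paper this is exactly Theorem~\ref{ours}/Corollary~\ref{integraldR}, and the paper explicitly introduces Theorem~\ref{ours} with ``Using it [Theorem~\ref{GK}] and some extra work, we have obtained\ldots''. So within this paper's logical structure, the integral description of $H^0$ via the Hodge--Tate regulator \emph{depends on} Theorem~\ref{GK}, and you cannot cite it to prove Theorem~\ref{GK}. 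Grosse-Kl\"onne's own argument in \cite{GK2} for $d=0$ does not pass through the regulator isomorphism of \cite{CDN3}; if you want a non-circular proof of (2) you must appeal to \cite{GK2} directly (or reproduce his argument), not to \cite{CDN3}.
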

Using it and some extra work, we have obtained the following description of $H^i_{\dr}(\calx_{\so_K})$: 
   
   \begin{theorem}{\rm (Colmez-Dospinescu-Nizio\l, \cite[Th. 6.26]{CDN3})}\label{ours}
   There are natural de Rham and Hodge-Tate regulator maps 
   \begin{align*}
 &  r_{\dr}: D(\sh^{i+1},\so_K)\to H^i_{\dr}(\calx_{\so_K}),\\
   & r_{\htt}: D(\sh^{i+1},\so_K)\to H^0_{\htt}(\calx_{\so_K},\Omega^i_{\calx_{\so_K}})
   \end{align*} that induce  topological $G\times\sg_K$-equivariant isomorphisms in the commutative diagram:
   $$
   \xymatrix{
    {\rm Sp}_i(\O_K)^*\ar[dr]^{\sim}_{r_{\htt}}\ar[r]^{\sim}_{r_{\dr}} & H^i_{\dr}(\calx_{\so_K})\\
 & H^0(\calx_{\so_K},\Omega^i_{\calx_{\so_K}})\ar[u]^{\wr}
 }
   $$
   \end{theorem}
 \begin{proof}{\rm (Sketch)} Our starting point was the computation of Schneider-Stuhler \cite{SS}: a $G$-equivariant topological isomorphism
 $$
 \xymatrix{
{\rm Sp}_r(K)^*\ar[r]^-{\alpha_S}_-{\sim} & H^i_{\dr}(X_K).
}
 $$
 Iovita-Spiess \cite{IS} made this isomorphism explicit: they have proved that there is a commutative diagram
 $$
 \xymatrix{
 0\ar[r] & D(\mathcal{H}^{i+1}, K)_{\rm deg}\ar[r] & D(\mathcal{H}^{i+1}, K)\ar[r] \ar@{->>}[dr]^{r_{\dr}} &  {\rm Sp}_i(K)^*\ar[r] \ar[d]^{\alpha_S}_{\wr}& 0\\
 & & &H^i_{\dr}(X_K)
 }
 $$
With a help from a detailed analysis of the integral Hyodo-Kato cohomology of the special fiber of $\calx_{\so_K}$ and some representation theory\footnote{We used here two facts: (a) ${\rm Sp}_i(\so_K)$ is, up to a $K^*$-homothety, the unique $G$-stable lattice in ${\rm Sp}_i(K)$; (b) 
${\rm Sp}_i(k)$ is irreducible.} this computation can be lifted to $\so_K$.  
 \end{proof}
 
  The following computation follows immediately: 
   \begin{corollary}\label{integraldR}
   \begin{enumerate}
   \item 
   The de Rham regulator $r_{\dr}$ induces a  topological $G$-equivariant isomorphism 
   $$
   r_{\dr}:  {\rm Sp}_i(\O_K)^* \wh{\otimes}_{\so_K}\so_C\stackrel{\sim}{\to} H^i_{\dr}(\calx_{\so_K}) \wh{\otimes}_{\so_K}\so_C\stackrel{\sim}{\to}H^i_{\dr}(\calx).
   $$
\item The Hodge-Tate  regulator $r_{\htt}$ induces a  topological $G$-equivariant isomorphism 
   $$
   r_{\htt}:  {\rm Sp}_i(\O_K)^* \wh{\otimes}_{\so_K}\so_C\stackrel{\sim}{\to} H^0_{\eet}(\calx_{\so_K},\Omega^i_{\calx_{\so_K}}) \wh{\otimes}_{\so_K}\so_C\stackrel{\sim}{\to}H^0_{\eet}(\calx,\Omega^i_{\calx}).
   $$
\end{enumerate}
\end{corollary}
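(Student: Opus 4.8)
The plan is to deduce Corollary \ref{integraldR} from Theorem \ref{ours} by applying the completed base change functor $-\wh{\otimes}_{\so_K}\so_C$, the only non-formal ingredient being that on the non-quasi-compact formal scheme $\calx_{\so_K}$ this functor commutes with de Rham (equivalently, by Theorem \ref{GK}, Hodge) cohomology.

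First, by Theorem \ref{ours} the maps $r_{\dr}$ and $r_{\htt}$ give topological $G\times\sg_K$-equivariant isomorphisms ${\rm Sp}_i(\so_K)^*\xrightarrow{\sim}H^i_{\dr}(\calx_{\so_K})$ and ${\rm Sp}_i(\so_K)^*\xrightarrow{\sim}H^0_{\eet}(\calx_{\so_K},\Omega^i_{\calx_{\so_K}})$, and by Theorem \ref{GK} the de Rham differential vanishes on the Hodge cohomology, so that these two targets coincide; in particular both are $p$-torsion-free. Since $-\wh{\otimes}_{\so_K}\so_C$ is a functor on pro-discrete $\so_K$-modules which preserves topological isomorphisms and is $G\times\sg_K$-equivariant (for the trivial $G$-action and the natural $\sg_K$-action on $\so_C$), applying it produces a topological $G\times\sg_K$-equivariant isomorphism
$${\rm Sp}_i(\so_K)^*\wh{\otimes}_{\so_K}\so_C\;\xrightarrow{\ \sim\ }\;H^i_{\dr}(\calx_{\so_K})\wh{\otimes}_{\so_K}\so_C\;\simeq\;H^0_{\eet}(\calx_{\so_K},\Omega^i_{\calx_{\so_K}})\wh{\otimes}_{\so_K}\so_C.$$

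Next I would identify the right-hand side with $H^i_{\dr}(\calx)$, resp.\ with $H^0_{\eet}(\calx,\Omega^i_{\calx})$. Since $\calx=\calx_{\so_K}\wh{\otimes}_{\so_K}\so_C$ with compatible log-structures, the logarithmic de Rham complex $\Omega^{\bullet}_{\calx}$ is the base change of the complex of finite locally free sheaves $\Omega^{\bullet}_{\calx_{\so_K}}$; by Theorem \ref{GK} (applied over $\so_C$ as well as over $\so_K$) the matter reduces to comparing the Hodge groups $H^0(\calx,\Omega^i_{\calx})$ and $H^0(\calx_{\so_K},\Omega^i_{\calx_{\so_K}})\wh{\otimes}_{\so_K}\so_C$. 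For this one exhausts $\calx_{\so_K}$ by an increasing family of quasi-compact admissible opens $U_n$, as in the standard construction of the Drinfeld model, so that $H^0(\calx_{\so_K},\Omega^i_{\calx_{\so_K}})=\invlim_n H^0(U_n,\Omega^i_{\calx_{\so_K}})$; on each $U_n$, flatness of $\so_C$ over $\so_K$ together with the $p$-torsion-freeness above give $H^0(U_n,\Omega^i_{\calx_{\so_K}})\wh{\otimes}_{\so_K}\so_C\simeq H^0(U_{n,\so_C},\Omega^i_{\calx})$, and passing to the countable inverse limit---which is precisely what $-\wh{\otimes}_{\so_K}\so_C$ does on pro-discrete modules---yields $H^0(\calx_{\so_K},\Omega^i_{\calx_{\so_K}})\wh{\otimes}_{\so_K}\so_C\simeq H^0(\calx,\Omega^i_{\calx})$. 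All maps in sight are $G\times\sg_K$-equivariant, the relevant $G$- and $\sg_K$-actions on $\calx$ and $\Omega^{\bullet}_{\calx}$ being obtained by base change from those on $\calx_{\so_K}$. Composing with the isomorphism of the previous paragraph gives both (1) and (2).

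The main---and essentially only---obstacle is the base-change step over the non-quasi-compact $\calx_{\so_K}$: one cannot apply flat base change to cohomology directly, but must descend to the quasi-compact opens of the exhaustion and control the resulting inverse system, which is exactly where the $p$-torsion-freeness supplied by Theorem \ref{ours} and the bookkeeping of the pro-discrete topology enter. Everything else is formal functoriality.
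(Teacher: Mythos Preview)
Your proposal is correct and is exactly the argument the paper has in mind: the paper gives no proof at all for this corollary, writing only ``The following computation follows immediately'' after Theorem~\ref{ours}, and what you have written is a careful unpacking of that word ``immediately''---apply $-\,\wh{\otimes}_{\so_K}\so_C$ to the isomorphisms of Theorem~\ref{ours} and check that completed base change commutes with the Hodge/de Rham groups via the standard exhaustion by quasi-compact opens. You have also correctly isolated the one genuinely non-formal point (base change along a non-quasi-compact formal scheme) and supplied the right fix.
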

 \subsection{Integrating  symbols} \label{deszcz1}  
          Let $i\geq 1$.  In this section, our goal is to construct natural compatible regulator maps 
    \begin{align*}
     r_{\eet}:  {\rm Sp}_i(\zp)^*\to H^i_{\eet}(X, \Z_p(i)),\quad      r_{\rm inf}:  {\rm Sp}_i(\zp)^*\to H^i_{\eet}(\calx, \ao\{i\})
     \end{align*}
      that are compatible with the classical \'etale and $A_{\rm inf}$-regulators.
We will show later that (the linearizations of) both regulators are $G\times \sg_K$-equivariant  isomorphisms.
 The maps $r_{\eet}$, $r_{\rm inf}$ are constructed by interpreting elements of ${\rm Sp}_i(\zp)^*$ as suitable distributions (see  the  discussion in Section \ref{duals}), and integrating \'etale and $A_{\rm inf}$-symbols of invertible functions on $\mathbb{H}^d_K$ against them. This idea appears in Iovita-Spiess \cite{IS} and was also heavily used in \cite{CDN3}.

           \subsubsection{Integrating \'etale symbols} We start with the construction of the \'etale regulator map 
    $$ r_{\eet}:  {\rm Sp}_i(\zp)^*\to H^i_{\eet}(X, \Z_p(i)).$$
         Fix a cohomological degree $i$ and set 
        $M: =H^i_{\eet}(X,\Z_p(i)).$
        For $H_0,...,H_i\in \mathcal{H}$, let 
        $$\psi_{\eet}(H_0,...,H_i):=r_{\eet}\left(\frac{\ell_{H_1}}{\ell_{H_0}}\otimes...\otimes \frac{\ell_{H_i}}{\ell_{H_0}}\right)\in M,$$
        where $r_{\eet}: \O(\mathbb{H}_C^d)^{*, \otimes i}\to M$ is the \'etale regulator map. 
          It is clear that this definition is independent of the choice of the unimodular equations for $H_0,...,H_i$.

\begin{proposition}
\label{bursztyn1}Let $i\geq 1$. 
\begin{enumerate}
\item Let $\delta_x$ denote the Dirac distribution at $x$.
 There is a unique continuous $\Z_p$-linear map 
 $$r_{\eet}:  D(\mathcal{H}^{i+1}, \zp)\to H^i_{\eet}(X, \Z_p(i))$$
 such that $r_{\eet}(\delta_{(H_0,...,H_i)})=\psi_{\eet}(H_0,...,H_i)$ for all 
 $H_0,...,H_i\in \mathcal{H}$.
 
  \item The map $r_{\eet}$ factors through the quotient ${\rm Sp}_i(\zp)^*$ of 
       $D(\mathcal{H}^{i+1}, \zp)$ and induces a natural map of $\Z_p$-modules
       $$r_{\eet}: {\rm Sp}_i(\zp)^*{\to} H^i_{\eet}(X,\Z_p(i)).$$
     \end{enumerate}
       \end{proposition}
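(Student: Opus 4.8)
The plan is to follow the method of \cite{IS} and \cite{CDN3}: construct the map $r_{\eet}$ of part (1) by \emph{integrating} the $M$-valued function $\psi_{\eet}$ (with $M:=H^i_{\eet}(X,\Z_p(i))$) against $\Z_p$-valued measures, and deduce part (2) by checking that this extension kills the degenerate distributions. For the integration to make sense one needs two inputs: that $\psi_{\eet}\colon \mathcal{H}^{i+1}\to M$ is continuous, and that $M$ is a complete pro-discrete $\Z_p$-module (so that it is the inverse limit of its discrete quotients). I expect the continuity of $\psi_{\eet}$ --- together with the care required about the topology on $M$ --- to be the only genuinely non-formal point: it is where the geometry of $\mathbb{H}^d_K$ (the Fréchet topology of $\O(\mathbb{H}^d_C)$, continuity of Kummer theory) really enters, whereas the relations feeding part (2) are purely formal and already available in the literature.

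\emph{Part (1).} First I would prove that $\psi_{\eet}$ is continuous. Since the cup product is continuous and bilinear, it suffices to show that $(H,H')\mapsto r_{\eet}(\ell_H/\ell_{H'})\in H^1_{\eet}(X,\Z_p(1))$ is continuous. The symbol of a nonzero constant $c\in C^*$ vanishes --- its reduction mod $p^n$ is pulled back from $H^1_{\eet}(\Spec C,\mu_{p^n})=C^*/(C^*)^{p^n}=0$ --- so $r_{\eet}(\ell_H/\ell_{H'})$ is independent of the choice of unimodular equations and factors as the étale symbol map $\O(X)^*\to H^1_{\eet}(X,\Z_p(1))$ --- continuous, because a function close to $1$ has a $p^n$-th root (binomial series) and hence dies mod $p^n$ --- precomposed with $\mathcal{H}\times\mathcal{H}\to\O(X)^*/C^*$, $(H,H')\mapsto [\ell_H/\ell_{H'}]$; the latter is continuous because unimodular equations can be chosen to vary continuously (even $p$-adic analytically) over the standard cells of $\mathcal{H}=\mathbb{P}^d(K)$, and then the ratios vary continuously in the Fréchet topology of $\O(\mathbb{H}^d_C)$. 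Granting continuity, write $\mathcal{H}=\varprojlim_m\mathcal{H}_m$ with $\mathcal{H}_m$ finite; for each discrete quotient $M\twoheadrightarrow M_k$, uniform continuity on the compact $\mathcal{H}^{i+1}$ gives an $m$ such that $\psi_{\eet}$ composed with $M\to M_k$ factors through $\mathcal{H}_m^{i+1}$, hence defines a linear map $\Z_p[\mathcal{H}_m^{i+1}]\to M_k$. Composing with the push-forward $D(\mathcal{H}^{i+1},\Z_p)\to\Z_p[\mathcal{H}_m^{i+1}]$ and taking the limit over $k$ produces a continuous $\Z_p$-linear $r_{\eet}\colon D(\mathcal{H}^{i+1},\Z_p)\to M$, which by construction sends $\delta_{(H_0,\dots,H_i)}$ to $\psi_{\eet}(H_0,\dots,H_i)$. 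Uniqueness is then clear, since finite $\Z_p$-combinations of Diracs are dense in $D(\mathcal{H}^{i+1},\Z_p)$ and $M$ is Hausdorff.

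\emph{Part (2).} By Schneider--Stuhler, ${\rm Sp}_i(\mathbf{Z})={\rm LC}^c(\mathcal{H}^{i+1},\mathbf{Z})$ is the common kernel inside ${\rm LC}(\mathcal{H}^{i+1},\mathbf{Z})$ of the cocycle functionals (one for each $(i+2)$-tuple) and the evaluations at $(i+1)$-tuples $(H_0,\dots,H_i)$ with $\ell_{H_0},\dots,\ell_{H_i}$ linearly dependent; dualizing (using the duality between $D(\mathcal{H}^{i+1},\Z_p)$ and ${\rm LC}(\mathcal{H}^{i+1},\mathbf{Z})$ recalled in Section~\ref{duals}), $D(\mathcal{H}^{i+1},\Z_p)_{\rm deg}$ is the closed $\Z_p$-span of the distributions $\sum_{j=0}^{i+1}(-1)^j\delta_{(H_0,\dots,\widehat{H_j},\dots,H_{i+1})}$ and of the Diracs $\delta_{(H_0,\dots,H_i)}$ with the $\ell_{H_j}$ linearly dependent. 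As $r_{\eet}$ is continuous, it suffices to show it kills these two families. For the cocycle family: writing $a_k=r_{\eet}(\ell_{H_k}/\ell_{H_0})$ and using that $r_{\eet}$ is multiplicative-to-additive, that the cup product is graded-commutative, and that $a_k\cup a_k=0$ (since $\{f\}\cup\{f\}=\{f\}\cup\{-1\}$ and $\{-1\}=0$ by the first step), one checks --- the Čech-coboundary computation of \cite{IS}, \cite{CDN3} --- that $\sum_{j=0}^{i+1}(-1)^j\psi_{\eet}(H_0,\dots,\widehat{H_j},\dots,H_{i+1})=0$. For the degenerate Diracs: a linear relation among $\ell_{H_0},\dots,\ell_{H_i}$ produces one among the functions $1,f_1,\dots,f_i$ (with $f_k=\ell_{H_k}/\ell_{H_0}$, all of them ratios of $K$-rational linear forms, hence units on $\mathbb{H}^d_C$), whereupon the Steinberg relation $\{f\}\cup\{1-f\}=0$ --- valid for étale symbols --- together with $\{f\}\cup\{-f\}=0$ and multiplicativity forces $\psi_{\eet}(H_0,\dots,H_i)=r_{\eet}(f_1)\cup\cdots\cup r_{\eet}(f_i)=0$; this is the standard lemma that $\{v_1\}\cup\cdots\cup\{v_m\}=0$ whenever $v_1+\cdots+v_m=1$, proved by induction on $m$ from the case $m=2$. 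Hence $r_{\eet}$ factors through $D(\mathcal{H}^{i+1},\Z_p)/D(\mathcal{H}^{i+1},\Z_p)_{\rm deg}\simeq{\rm Sp}_i(\Z_p)^*$, giving the map of part (2).
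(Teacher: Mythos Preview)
Your overall strategy matches the paper's: integrate the symbol function $\psi_{\eet}$ against measures, then check that the cocycle and Steinberg-type relations kill the degenerate distributions. Part (2) is handled essentially as in the paper.

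The gap is in Part (1), precisely at the point you yourself flag as ``the care required about the topology on $M$'' --- you flag it but then do not supply it. You assume that $M=H^i_{\eet}(X,\Z_p(i))$ is a complete pro-discrete $\Z_p$-module, and that the symbol map $\O(X)^*\to H^1_{\eet}(X,\Z_p(1))$ is continuous ``because a function close to $1$ has a $p^n$-th root (binomial series)''. But $X=\mathbb{H}^d_C$ is not quasi-compact: $\O(X)^*$ carries only the Fr\'echet topology given by sup-norms on the standard affinoids $U_n$, and a function Fr\'echet-close to $1$ need not admit a \emph{global} $p^n$-th root --- the binomial series converges only on each $U_n$ separately, with a bound depending on $n$. (Concretely, if $H_0,H_1$ are close, the ratio $\ell_{H_1}/\ell_{H_0}$ is not uniformly close to $1$ on $X$, since points of $X$ can approach the removed hyperplanes.) Similarly, nothing so far tells you that $M$ with its naive quotient topology is pro-discrete; you cannot simply write it as a limit of discrete quotients $M_k$.

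The paper circumvents both issues by working at the \emph{chain level} over the affinoid cover. Using Scholze's $K(\pi,1)$ theorem, $\rg_{\eet}(U_n,\Z_p(i))$ is represented by continuous group cochains of $\Pi(n)=\pi_1(U_n)$, and the local Chern class factors through the genuine Banach module $M_n:=\Hom(\Pi(n),\Z_p(1))$. On $U_n$ one \emph{does} have the root: if $H_0,H_1$ are $m$-equivalent then $\ell_{H_1}/\ell_{H_0}$ has a $p^{r_nm}$-th root in $\O(U_n)^*$ for a constant $r_n>0$ depending only on $U_n$, so $\mathcal{H}^2\to M_n$ is continuous. Integration against measures then produces maps $r_{\eet,n}\colon D(\mathcal{H}^{i+1},\Z_p)\to \rg_{\eet}(U_n,\Z_p(i))[i]$; one sets $r_{\eet}=\varprojlim_n r_{\eet,n}$ and passes to cohomology only at the end. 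This avoids any a priori assumption on the topology of $H^i_{\eet}(X,\Z_p(i))$. Your argument would be complete once you interpose this step.
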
           

\begin{proof} Uniqueness in (1) is clear since the $\Z_p$-submodule of $D(\mathcal{H}^{i+1}, \zp)$ spanned by the Dirac distributions is dense. 
 
   Existence claim in (1) requires more work. 
        Let $\{U_n\}_{n\geq 1}$ be the standard admissible affinoid  covering of $X$. Let $\Pi(n)$ be the fundamental group of $U_n$. Denote by $\rg(\Pi(n),\Z_p(i))$ the complex of nonhomogenous continuous cochains representing the continuous group cohomology of $\Pi(n)$. By the $K(\pi,1)$-Theorem of Scholze \cite[Th. 1.2]{Sch1} this complex also represents $\rg_{\eet}(U_n,\Z_p(i))$. Since the action of $\Pi(n)$ on $\Z_p(1)$ is trivial the local  \'etale Chern class map factors as 
        \begin{equation*}
        c_{1,n}^{\eet}: \so(U_n)^*\to \Hom(\Pi(n),\Z_p(1))\to \rg(\Pi(n),\Z_p(1))[1].
        \end{equation*}
         The global \'etale Chern class is represented  by the composition
         $$
          c_{1,n}^{\eet}: \so(X)^*\to \invlim_n\so(U_n)^*\to \holim_n\Hom(\Pi(n),\Z_p(1))\to \holim_n\rg(\Pi(n),\Z_p(1))[1]\stackrel{\sim}{\leftarrow}  
          \rg_{\eet}(X,\Z_p(1))[1].
         $$
         The \'etale regulator $r_{\eet}:\so(X)^{*,\otimes i}\to    \rg_{\eet}(X,\Z_p(i))[i]$ is then  represented by the cup product:  $r_{\eet}:=c^{\eet}_1\cup\cdots \cup c^{\eet}_{1}$.
         
           The composition
           \begin{equation}
           \label{pozno1}
           \Psi_i: \sh^{i+1}\to \so(X)^{*,\otimes i}\lomapr{r_{\eet}}\rg_{\eet}(X,\Z_p(i))[i]
           \end{equation}
           represents the map $\psi_{\eet}$. We claim that it is continuous. Indeed, it suffices to show that so are the induced maps 
           $\Psi_{i,n}: \sh^{i+1}\to    \rg(\Pi(n),\Z_p(i))[i]$, for $n\geq 1$. Or, by continuty of the cup product that so are the maps $\Psi_{1,n}$. Or, simplifying further, that so are the maps
           \begin{equation}
           \label{pozno2}
               \Psi_{1,n}: \sh^{2}\to \so(X)^{*}\lomapr{r_{\eet}}\Hom(\Pi(n),\Z_p(1)).
           \end{equation}
           
        To show this, write 
$\mathcal{H}=\varprojlim_{m} \mathcal{H}_m$, where $\mathcal{H}_m$ is the set of $m$-equivalence classes of $K$-rational hyperplanes\footnote{Recall that two hyperplanes $H_1,H_2$ are called $m$-equivalent (i.e., $[H_1]=[H_2]\in \sh_m$) if they have unimodular equations $\ell_1, \ell_2$ such that $\ell_1= \ell_2$ modulo $\varpi^m$, where $\varpi$ is a uniformizer of $K$.} and
set  $M_n:=\Hom(\Pi(n),\Z_p(1))$. It suffices to show that, for each $k\geq 1$, there is an $m$ such that the map $$\Psi_{1,n,k}: \mathcal{H}^{2}\xrightarrow{\Psi_1} M_n\to M_n/p^kM_n$$ factors through the projection $ \mathcal{H}^{2}\to \mathcal{H}_m^{2}$.
   Taking into account the construction of $\Psi_{1,n}$, it suffices to show that, for $m$ large enough, if two hyperplanes 
   $H_0,H_1$ are $m$-equivalent, then $r_{\eet}(\ell_{H_1}/\ell_{H_0})\in p^k M_n$. But this is clear, since 
   in this case $\ell_{H_1}/\ell_{H_0}$ has a $p^{r_n m}$'th root in $\O(U_n)^{*}$, for some  constant $r_n>0$ depending only on $U_n$, and since $r_{\eet}$ is a homomorphism, we have $r_{\eet}(\ell_{H_1}/\ell_{H_0})\in p^{r_n m}M_n$.

     Since $\Hom(\Pi(n),\Z_p(1))$ is a Banach space and the map $\Psi_{1,n}$ from (\ref{pozno1}) is continuous on $\sh^{i+1}$, it defines, by integration against distibutions, 
      a continuous map
      $$
   r_{\eet,n}: D(\sh^{i+1},\Z_p)\to \rg_{\eet}(U_n,\Z_p(i))[i]
      $$
      such that $r_{\eet,n}(\delta_{(H_0,\ldots, H_i)})=\psi_{\eet,n}(H_0,\ldots, H_i)$, for all $H_0,\ldots, H_i\in \sh$, where $\psi_{\eet,n}$ is the analog of $\psi_{\eet}$ for $U_n$. The construction being compatible with the change of $n$ we get the existence of the map in (1) by setting 
      $r_{\eet}:=\invlim_nr_{\eet,n}$ and passing to cohomology. 
      
      For (2) we need to check  the factorization of the regulator $r_{\eet}$ from (1) through the quotient by the degenerate distributions. That is, we need to show that,
for any $\mu\in D(\mathcal{H}^{i+1}, \zp)_{\rm deg}$, we have $r_{\eet}(\mu)=0$. For that, by the construction of 
$r_{\eet}(\mu)$,  it suffices to check that:
\begin{enumerate}
\item  for all $H_0,...,H_{i+1}\in \mathcal{H}$, we have 
\begin{equation}
\label{cafe}\psi_{\eet}(H_1,...,H_{i+1})-\psi_{\eet}(H_0, H_2,..., H_{i+1})+...+(-1)^{i+1}\psi_{\eet}(H_0,...,H_{i})=0
\end{equation}
\item 
       and, if the
       $\ell_{H_j}$, $0\leq j\leq i$,  are linearly dependent, then     
       $\psi_{\eet}(H_0,...,H_i)=0$. 
       \end{enumerate}
       To see (1) note that (a)  if one permutes $H_j$ with $H_{j+1}$ in the expression in (\ref{cafe}) the latter will change its sign, (b) the coefficient of $r_{\eet}(\delta_{H_1,\ldots, H_{i+1}})$ in the development of
       the expression in (\ref{cafe}) is zero. 

(2) follows from the fact that the \'etale regulator satisfies the Steinberg relations. More precisely, if $x_j=\ell_j/\ell_0$, $0\leq j\leq i$, where $\ell_0,\ldots, \ell_i$ are linear equations of $K$-rational hyperplanes, it suffices to show that the symbol
$$\{x_1,\ldots, x_i, 1 +a_1x_1+\cdots +a_ix_i\}=0, \quad a_j\in K,$$  in the Milnor $K$-theory group $K^M_{i+1}(\so(X)^*).$  Note that the symbol $\{x_1,\ldots, x_i,1\}=0$. We will reduce to this case by the following algorithm. Step 1: up to reordering we may assume that $y_1:=(1+a_1x_1)\neq 0$ (otherwise we are done). Then, using the Steinberg relations $\{z,1-z\}=0$ and the fact that $\{x,a\}=0,$ for $ a\in K^*$,  we compute
$$
\{x_1,x_2, \ldots, x_i, 1 +a_1x_1+\cdots +a_ix_i\}=\{x_1,x_2/y_1, \ldots, x_i/y_1, 1 +a_2x_2/y_1+\cdots +a_ix_i/y_1\}.
$$
Note that this makes sense since $x_j/y_1\in\so(X)^*$. In fact, $x_j/y_1=\ell_j/(\ell_0+a_1\ell_1)$ is again a quotient of two linear equations of $K$-hyperplanes.
Step 2: reorder the terms in the last symbol to make $x_2/y_1$ appear first and repeat.

       \end{proof}

          \subsubsection{Integrating  $A_{\rm inf}$-symbols.} 
          Let $i\geq 1$. We pass now to the $\A_{\rm inf}$-regulator map
    \begin{align*}
       r_{\rm inf}:  A_{\rm inf}\wh{\otimes}_{\Z_p}{\rm Sp}_i(\zp)^*\to H^i_{\eet}(\calx, \ao\{i\})
     \end{align*}
      that is  compatible with the classical $A_{\rm inf}$-regulator as well as with the \'etale regulator
      $$
        r_{\eet}:  {\rm Sp}_i(\zp)^*\to H^i_{\eet}(X, \Z_p(i))
      $$
      defined above. To start, 
we define the regulators
    $$r_{\rm inf}:  D(\mathcal{H}^{i+1}, \zp)\to H^i_{\eet}(\calx, \ao\{i\}),\quad  r_{\rm inf}:  {\rm Sp}_i(\zp)^*\to H^i_{\eet}(\calx, \ao\{i\})$$
   by setting  $r_{\rm inf}:=\gamma r_{\eet}, $ 
    where $\gamma: H^i_{\eet}(X,\Z_p(i))\to H^i_{\eet}(\calx, \ao\{i\})$ is the canonical map and the \'etale regulator 
    \begin{equation}
    \label{sama}
    r_{\eet}:  D(\mathcal{H}^{i+1}, \zp)\to H^i_{\eet}(X, \Z_p(i))
    \end{equation}
     is the map defined above.

\begin{corollary}
\label{bursztyn2}Let $i\geq 1$. The above regulators extend uniquely to compatible 
continuous $A_{\rm inf}$-linear maps
 $$r_{\rm inf}:  A_{\rm inf}\wh{\otimes}_{\Z_p}D(\mathcal{H}^{i+1}, \zp)\to H^i_{\eet}(\calx, \ao\{i\}),\quad r_{\rm inf}: A_{\rm inf}\wh{\otimes}_{\Z_p}{\rm Sp}_i(\zp)^*{\to} H^i_{\eet}(\calx, \ao\{i\})
 $$
 that are compatible with the \'etale regulators.      
  \end{corollary}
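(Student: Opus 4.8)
The plan is to deduce the corollary formally from the continuity statement of Proposition~\ref{bursztyn1} together with the (derived) completeness of the target, extending the regulators of that proposition to the completed tensor product by $A_{\rm inf}$-linearity and continuity, with no new geometric input. First I would describe the two sides. The transition maps ${\rm LC}(\mathcal{H}_m^{i+1},\zp)^*\to{\rm LC}(\mathcal{H}_{m-1}^{i+1},\zp)^*$ are split surjections of finite free $\zp$-modules (their kernels, being duals of direct summands, are $p$-torsion free), so the profinite $\zp$-module $D(\mathcal{H}^{i+1},\zp)=\varprojlim_m{\rm LC}(\mathcal{H}_m^{i+1},\zp)^*$ is isomorphic, as a topological $\zp$-module, to a countable product $\prod_k\zp e_k$; hence $A_{\rm inf}\wh\otimes_{\zp}D(\mathcal{H}^{i+1},\zp)\simeq\prod_k A_{\rm inf}e_k$ is $(p,\wt\xi)$-adically complete, and the submodule $\bigoplus_k A_{\rm inf}e_k$, which lies in the image of the algebraic tensor product $A_{\rm inf}\otimes_{\zp}D(\mathcal{H}^{i+1},\zp)$, is dense in it. For the target, $\ao$ is derived $\wt\xi$-complete by Theorem~\ref{dRspec}(3) and derived $p$-complete because $\Ainf=W(\varprojlim_\varphi\O^+_X/p)$ is $p$-adically complete and both $\R\nu_*$ and $\LL\eta_\mu$ preserve derived completeness; since $\R\Gamma_{\eet}(\calx,-)$ commutes with derived completion and twisting by the invertible module $A_{\rm inf}\{1\}^{\otimes i}$ preserves it, $H^i_{\eet}(\calx,\ao\{i\})$ is derived $(p,\wt\xi)$-complete, hence a complete topological $A_{\rm inf}$-module in the framework of \cite[Sec.\,2.2]{CDN3}.

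Next I would carry out the extension. Since $r_{\eet}$ is continuous on $D(\mathcal{H}^{i+1},\zp)$ by Proposition~\ref{bursztyn1} and the canonical map $\gamma\colon H^i_{\eet}(X,\zp(i))\to H^i_{\eet}(\calx,\ao\{i\})$ is continuous, the $\zp$-linear map $r_{\rm inf}=\gamma\circ r_{\eet}$ is continuous; in particular $r_{\rm inf}(e_k)\to 0$ in $H^i_{\eet}(\calx,\ao\{i\})$, so for any $(a_k)_k\in\prod_k A_{\rm inf}$ the partial sums of $\sum_k a_k\,r_{\rm inf}(e_k)$ form a Cauchy sequence (each member of a fundamental system of open $A_{\rm inf}$-submodules of the target contains $r_{\rm inf}(e_k)$ for all but finitely many $k$) and therefore converge. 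The assignment $(a_k)_k\mapsto\sum_k a_k\,r_{\rm inf}(e_k)$ is then a continuous $A_{\rm inf}$-linear map $A_{\rm inf}\wh\otimes_{\zp}D(\mathcal{H}^{i+1},\zp)\to H^i_{\eet}(\calx,\ao\{i\})$; it restricts to $r_{\rm inf}$ on $D(\mathcal{H}^{i+1},\zp)$, is independent of the chosen topological basis, and is unique with these properties since a continuous $A_{\rm inf}$-linear extension is determined by its values on the $e_k$. Applying $A_{\rm inf}\wh\otimes_{\zp}(-)$ to the strict exact sequence $(\ref{strict1})$ — which stays exact since $A_{\rm inf}$ is $\zp$-flat and the sequence is Mittag-Leffler — and using that $r_{\rm inf}$ annihilates the degenerate distributions by Proposition~\ref{bursztyn1}(2), one obtains the ${\rm Sp}_i(\zp)^*$-version and its compatibility with the $D(\mathcal{H}^{i+1},\zp)$-version via the quotient map; compatibility with the étale regulators is immediate from the definition $r_{\rm inf}=\gamma\circ r_{\eet}$ and survives the extension by uniqueness.

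The point to stress is that, granted the topological conventions of \cite[Sec.\,2.2]{CDN3}, all of this is soft: there is no real obstacle beyond bookkeeping with profinite modules and completions. The genuine inputs are the continuity of $r_{\eet}$ on $D(\mathcal{H}^{i+1},\zp)$ — precisely the content of Proposition~\ref{bursztyn1}, which rests on Scholze's $K(\pi,1)$-theorem, on the divisibility of $\ell_{H_1}/\ell_{H_0}$ in $\O(U_n)^*$ for $m$-equivalent hyperplanes, and on the Steinberg relations in Milnor $K$-theory — and the derived $(p,\wt\xi)$-completeness of $H^i_{\eet}(\calx,\ao\{i\})$; the present corollary merely linearizes the regulator over $A_{\rm inf}$.
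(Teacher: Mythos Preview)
Your approach is correct in spirit but takes a different route from the paper. The paper works locally on the standard affinoid covering $\{U_n\}$ of $X$: for each $n$ it factors the regulator through $M_n=\Hom(\Pi(n),\zp(i))$ at the level of complexes, extends $M_n\to\rg_{\eet}(\su_n,A\Omega_{\su_n}\{i\})[i]$ to $A_{\rm inf}\wh\otimes_{\zp} M_n$ using derived $(p,\mu)$-completeness of the local target complex, and only then takes $\holim_n$. You instead work globally at the level of the cohomology group, choosing a topological $\zp$-basis $(e_k)$ of $D(\mathcal{H}^{i+1},\zp)$ and summing a convergent series in the target. The paper's local-to-global argument buys control of the topology on the target for free (each $\su_n$ is quasi-compact), whereas your direct argument is shorter but must confront that topology head-on.

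That is where your write-up is loose. You infer from derived $(p,\wt\xi)$-completeness of $H^i_{\eet}(\calx,\ao\{i\})$ that it is ``a complete topological $A_{\rm inf}$-module''. Derived completeness is an algebraic condition and does not by itself supply a Hausdorff complete topology with a basis of open $A_{\rm inf}$-submodules, which is what your Cauchy-series argument genuinely uses; what does supply it is the prodiscrete structure coming from $\rg_{\eet}(\calx,\ao\{i\})\simeq\holim_n\rg_{\eet}(\su_n,A\Omega_{\su_n}\{i\})$, i.e.\ exactly the local decomposition the paper exploits. A second slip: your density claim for $\bigoplus_k A_{\rm inf}e_k\subset\prod_k A_{\rm inf}e_k$ is false in the $(p,\wt\xi)$-adic topology you name (the constant sequence $(1,1,\dots)$ is not approximable by finite sums modulo $(p,\wt\xi)^n$); it is true in the product topology, which is the correct topology on the completed tensor product here, so you should disentangle the two. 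With these points addressed your global argument goes through. For the passage to ${\rm Sp}_i(\zp)^*$, the paper simply observes that the sequence (\ref{strict1}) is split, which is a bit more direct than your flatness-plus-Mittag-Leffler argument but reaches the same conclusion.
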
           
\begin{proof} Uniqueness is clear. To show the existence, let $\{U_n\}_{n\in\N}$ be the standard admissible affinoid covering of $X$. For $n\in\N$, set
$$r_{{\rm inf},n}:  D(\mathcal{H}^{i+1}, \zp)\to \rg_{\eet}(\su_n, A\Omega_{\su_n}\{i\})[i],\quad r_{\rm inf,n}:=\gamma r_{\eet,n},
$$
where $\su_n$ is the standard semistable formal model of $U_n$ and the map 
$$
r_{\eet,n}: D(\mathcal{H}^{i+1}, \zp)\to \rg_{\eet}(U_n, \Z_p(i))[i]
$$
was constructed above. The map $r_{{\rm inf},n}$ factors as
$$
r_{{\rm inf},n}:  D(\mathcal{H}^{i+1}, \zp)\to M_n\to \rg_{\eet}(\su_n, A\Omega_{\su_n}\{i\})[i],
$$
where $M_n:=\Hom(\Pi(n),\Z_p(i))$ for the fundamental group $\Pi(n)$ of $U_n$.

Since $r_{\eet}=\holim_nr_{\eet,n}$, we have the factorization
\begin{equation*}
r_{\eet}:  D(\mathcal{H}^{i+1}, \zp)\to \varprojlim_nM_n\verylomapr{\holim_nr_{\eet,n}} \holim_n\rg_{\eet}(U_n,\Z_p(i))[i]\simeq\rg_{\eet}(X,\Z_p(i))[i].
\end{equation*}
This induces the following factorization
\begin{align*}
r_{\rm inf}=\holim_nr_{{\rm inf},n}:  \quad & A_{\rm inf}\wh{\otimes}_{\Z_p}D(\mathcal{H}^{i+1}, \zp)\to A_{\rm inf}\wh{\otimes}_{\Z_p}\varprojlim_nM_n\stackrel{\sim}{\to}\varprojlim_nA_{\rm inf}\wh{\otimes}_{\Z_p}M_n
\\
 & \quad \to \holim_n\rg_{\eet}(\su_n, \ao\{i\})[i]\stackrel{\sim}{\leftarrow}\rg_{\eet}(\calx, \ao\{i\})[i].
\end{align*}
The existence of the first map is clear and so is the following isomorphism. The third map exists because both $A_{\rm inf}\wh{\otimes}_{\Z_p}M_n$ and $\rg_{\eet}(\su_n, \ao\{i\})$ are derived $(p,\mu)$-adically  complete.
 This proves the existence of the first regulator in the corollary. The existence of the second 
 follows immediately from the fact that the map (\ref{sama}) factors through ${\rm Sp}_i(\zp)^*$ once we know that the sequence
$$
 0\to A_{\rm inf}\wh{\otimes}_{\Z_p}D(\mathcal{H}^{i+1}, \zp)_{\rm deg} \to A_{\rm inf}\wh{\otimes}_{\Z_p}D(\mathcal{H}^{i+1}, \zp)\to A_{\rm inf}\wh{\otimes}_{\Z_p}{\rm Sp}_i(\zp)^*\to 0
$$
is strict exact. This sequence is obtained from the strict exact sequence (\ref{strict1}) by tensoring with $A_{\rm inf}$. Hence the only question is the strict surjection on the right, which follows from the fact that the sequence
(\ref{strict1}) is actually split (since all modules are duals of free modules). 
\end{proof}
     
     \subsection{The $A_{\rm inf}$-cohomology of  Drinfeld symmetric spaces}
We are now ready to prove Theorem \ref{main}.  Let $i\geq 0$. We will show that 
       the map $r_{\inf}$  induces a $\phi^{-1}$-equivariant topological  isomorphism of  $A_{\rm inf}$-modules
       $$r_{\rm inf}: A_{\rm inf}\widehat{\otimes}_{\zp} {\rm Sp}_i(\zp)^*\stackrel{\sim}{\to} H^i_{\eet}(\calx, \ao\{i\}).$$

Compatibility with the operator $\phi^{-1}$ follows from the fact that  $r_{\rm inf}$ is $A_{\rm inf}$-linear and it  is induced from $r_{\eet}$ hence maps 
 $D(\mathcal{H}^{i+1}, \zp)$ to $H^i_{\eet}(\calx, \ao\{i\})^{\phi^{-1}=1}$.
 For the rest of the claim, first, we show that the induced map $$
 \overline{r}_{\rm inf}: (A_{\rm inf}\widehat{\otimes}_{\zp} {\rm Sp}_i(\zp)^*)/\tilde{\xi}\to  H^i_{\eet}(\calx, \ao\{i\})/\tilde{\xi}
 $$
 is a topological isomorphism.
 But this map fits into the following commutative diagram (of $A_{\rm inf}$-linear continuous maps, where $A_{\rm inf}$ acts on $\so_C$ via $\tilde{\theta}$)
  $$
 \xymatrix{
 (A_{\rm inf}\widehat{\otimes}_{\zp} {\rm Sp}_i(\zp)^*)/\tilde{\xi}\ar[r]^-{ \overline{r}_{\rm inf}} \ar[d]^{\tilde{\theta}}_{\wr} & H^i_{\eet}(\calx, \ao\{i\})/\tilde{\xi}\ar@{^(->}[d]^{\alpha}\ar@/^50pt/[dd]^{\overline{\iota}_{\hk}} \\
   \so_C\widehat{\otimes}_{\zp} {\rm Sp}_i(\zp)^*\ar[rd]^{r_{\htt}}_{\sim}& H^i_{\eet}(\calx, \ao\{i\}/\tilde{\xi})\ar[d]_{\wr}\\
 & H^0_{\eet}(\calx, \Omega^i_{\calx})
 }
 $$
 The map $\alpha$ is the  change-of-coefficients  map;  it is clearly injective.  The right vertical map is an isomorphism because we have the local-global spectral sequence
 $$
 E_2^{s,t}=H^s_{\eet}(\calx, H^t(\ao\{i\}/\tilde{\xi}))\Rightarrow H^{s+t}_{\eet}(\calx, \ao\{i\}/\tilde{\xi})
 $$
 and, by Theorem \ref{dRspec} and Theorem \ref{CK}, the isomorphisms $H^t(\ao\{i\}/\tilde{\xi})\simeq H^t(\wt{\Omega}_{\calx}\{i\})\simeq \Omega^t_{\calx}\{i-t\}$. Hence, by Theorem \ref{GK}, 
 $$
  E_2^{s,t}=H^s_{\eet}(\calx,  \Omega^t\{i-t\})=0,\quad s\geq 1.
 $$
 The above  diagram  commutes by Proposition \ref{compatibility2}. The slanted arrow is a topological  isomorphism by Corollary  \ref{integraldR}. It follows that the map $\alpha$ is surjective, hence it is an isomorphism and so is, by the above diagram, the  map $ \overline{r}_{\rm inf}$.  The latter is also a topological isomorphism because so is the map $\tilde{\theta}$ and the map $\overline{\iota}_{\hk}$ is a continuous isomorphism. 
 
  Next, we will show that $\overline{r}_{\rm inf}$ being a topological  isomorphism implies that so is the original map ${r}_{\rm inf}$.   
  Let $T$ be the homotopy fiber of ${r}_{\rm inf}$. We claim that 
  the complex 
  \begin{equation}
  \label{derived-Nakayama}
  T\otimes^{\rm L}_{A_{\rm inf}}A_{\rm inf}/\tilde{\xi}\simeq 0.
  \end{equation} Indeed, since $\overline{r}_{\rm inf}$ is an isomorphism, it suffices to show that 
 the domain and the target of ${r}_{\rm inf}$ are  $\tilde{\xi}$-torsion free. This is clear for the domain. For the  target, 
note that 
 the exact triangle 
 $$\ao\{i\}\lomapr{\tilde{\xi}} \ao\{i\}\to \ao\{i\}/\tilde{\xi}$$
yields an exact sequence
 $$0\to H^i_{\eet}(\calx, \ao\{i\})/\tilde{\xi}\lomapr{\alpha} H^i_{\eet}(\calx, \ao/\tilde{\xi})\to H^{i+1}_{\eet}(\calx, \ao\{i\})[\tilde{\xi}]\to 0.$$
 By the above, the first map is an isomorphism, hence
  $H^{i+1}_{\eet}(\calx, \ao\{i\})[\tilde{\xi}]=0$.   Since $i\geq 0$ was arbitrary, we deduce that, for all $j\geq 1$ and all $i$, 
 $H^j_{\eet}(\calx, \ao\{i\})$ has no $\tilde{\xi}$-torsion, and this is clearly true for $j=0$ as well. 
 
  Since $T$ is derived $\tilde{\xi}$-complete (because so are the domain and the target of $r_{\inf}$, the latter using the derived $\tilde{\xi}$-completeness of $\ao$ and the preservation of this property by derived pushforward and passage to cohomology), by the derived Nakayama Lemma (see Section \ref{derived-complete}) we have $T\simeq 0$ as well. 
   This finishes the proof that $r_{\rm inf}$ is an isomorphism. 
  
   Since the domain and the target of $r_{\rm inf}$ are $\tilde{\xi}$-torsion-free and the reduction $\overline{r}_{\rm inf}$ is a topological isomorphism so is $r_{\rm inf}$.
  This finishes the proof.

\section{Integral $p$-adic \'etale cohomology of Drinfeld symmetric spaces}We are now ready to compute the \'etale cohomology. 
  Let  $X_K:={\mathbb H}^d_K$ be the Drinfeld symmetric space of dimension $d$ over $K$ and let  $\calx_{\so_K}$   be its standard semistable formal model over $\so_K$. Let $X:=X_K\times_KC$.
   \begin{theorem}
   \label{final} Let $i\geq 0$. 
   \begin{enumerate}
   \item There is a  $G\times \sg_K$-equivariant  topological  isomorphism
   $$
   r_{\eet}: {\rm Sp}_i(\Z_p)^*\stackrel{\sim}{\to} H^i_{\eet}(X, {\Z}_p(i)).
   $$
 It is compatible with the rational isomorphism $r_{\eet}: {\rm Sp}_i(\Z_p)^*\otimes \Q_p\stackrel{\sim}{\to} H^i_{\eet}(X, {\Q}_p(i))$ from \cite{CDN3}.
 \item There is a  $G\times \sg_K$-equivariant  topological  isomorphism
   $$
   \overline{r}_{\eet}: {\rm Sp}_i({\mathbf F}_p)^*\stackrel{\sim}{\to} H^i_{\eet}(X, {\mathbf F}_p(i)).
   $$
   \end{enumerate}
   \end{theorem}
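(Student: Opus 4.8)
The plan is to deduce Theorem~\ref{final} from Theorem~\ref{main} and the description of $p$-adic nearby cycles in Corollary~\ref{BMS22}. Fix $i\geq 1$ (for $i=0$ both sides are $\Z_p$). First I would record the short exact sequence~(\ref{evening1}): applying $\R\Gamma_{\eet}(\calx,-)$ to the triangle
$$\tau_{\leq i}\bigl[\tau_{\leq i}\ao\{i\}\xrightarrow{1-\varphi^{-1}}\tau_{\leq i}\ao\{i\}\bigr]\to\tau_{\leq i}\ao\{i\}\xrightarrow{1-\varphi^{-1}}\tau_{\leq i}\ao\{i\}$$
and using Corollary~\ref{BMS22} together with the fact that $H^j(\R\Gamma_{\eet}(\calx,G))$ depends only on $\tau_{\leq j}G$ (so $H^j_{\eet}(\calx,\tau_{\leq i}\ao\{i\})=H^j_{\eet}(\calx,\ao\{i\})$ for $j\leq i$, and $H^i_{\eet}(\calx,\tau_{\leq i}\R\nu_*\wh{\Z}_p(i))=H^i_{\proet}(X,\wh{\Z}_p(i))=H^i_{\eet}(X,\Z_p(i))$), the long exact sequence in degree $i$ collapses to
$$0\to H^{i-1}_{\eet}(\calx,\ao\{i\})/(1-\varphi^{-1})\to H^i_{\eet}(X,\Z_p(i))\to H^i_{\eet}(\calx,\ao\{i\})^{\varphi^{-1}=1}\to 0,$$
with $G\times\sg_K$-equivariant maps (Galois-equivariance of the second map being part of Corollary~\ref{BMS22}), the second map being the canonical map $\gamma$.

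Next I would identify the two outer terms via Theorem~\ref{main}. For the right-hand one, $r_{\rm inf}$ is a $\varphi^{-1}$-equivariant isomorphism $A_{\rm inf}\wh{\otimes}_{\Z_p}{\rm Sp}_i(\Z_p)^*\xrightarrow{\sim}H^i_{\eet}(\calx,\ao\{i\})$ under which $\varphi^{-1}$ corresponds to $\varphi^{-1}_{A_{\rm inf}}\otimes\mathrm{id}$ (as follows from the proof of Theorem~\ref{main}: $r_{\rm inf}$ is $A_{\rm inf}$-linear and carries ${\rm Sp}_i(\Z_p)^*$ into the $\varphi^{-1}$-fixed part); since $A_{\rm inf}^{\varphi=1}=\Z_p$ with trivial $\sg_K$-action, and since taking $\varphi$-invariants commutes with $-\wh{\otimes}_{\Z_p}{\rm Sp}_i(\Z_p)^*$ — here one uses that ${\rm Sp}_i(\Z_p)^*$ is a countable inverse limit of finite free $\Z_p$-modules (Section~\ref{duals}) — one obtains a $G\times\sg_K$-equivariant topological isomorphism $H^i_{\eet}(\calx,\ao\{i\})^{\varphi^{-1}=1}\simeq{\rm Sp}_i(\Z_p)^*$ with trivial $\sg_K$-action. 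For the left-hand one, combining Theorem~\ref{main} at level $i-1$ with the twist $M\mapsto M\{1\}$ identifies $H^{i-1}_{\eet}(\calx,\ao\{i\})$ with $A_{\rm inf}\{1\}\wh{\otimes}_{\Z_p}{\rm Sp}_{i-1}(\Z_p)^*$, on which $1-\varphi^{-1}$ becomes, after untwisting $\{1\}$, the operator $(1-\xi\varphi^{-1})\otimes\mathrm{id}$ on $A_{\rm inf}\wh{\otimes}_{\Z_p}{\rm Sp}_{i-1}(\Z_p)^*$ (using $(1-\varphi^{-1})(x\{1\})=[(1-\xi\varphi^{-1})x]\{1\}$). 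Now $1-\xi\varphi^{-1}$ is surjective on $A_{\rm inf}$ (modulo $p$ one reduces to an Artin--Schreier-type equation over $\O_C^{\flat}$, solvable as $C^{\flat}$ is algebraically closed, and $A_{\rm inf}$ is $p$-adically complete) with kernel the finitely generated module $\mu\Z_p$; writing ${\rm Sp}_{i-1}(\Z_p)^*=\varprojlim_k L_k$ with surjective transitions, the kernels of $1-\xi\varphi^{-1}$ on the $A_{\rm inf}\otimes_{\Z_p}L_k$ form a Mittag--Leffler system, so $\varprojlim^1$ vanishes and $1-\varphi^{-1}$ is surjective on $H^{i-1}_{\eet}(\calx,\ao\{i\})$. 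Hence the left-hand term of~(\ref{evening1}) is $0$ (in particular everything vanishes for $i>d$, as in Theorem~\ref{key11}).

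It then follows that~(\ref{evening1}) gives a $G\times\sg_K$-equivariant topological isomorphism $H^i_{\eet}(X,\Z_p(i))\xrightarrow{\sim}{\rm Sp}_i(\Z_p)^*$; and since $r_{\rm inf}=\gamma\circ r_{\eet}$ agrees on ${\rm Sp}_i(\Z_p)^*\subset A_{\rm inf}\wh{\otimes}_{\Z_p}{\rm Sp}_i(\Z_p)^*$ with the isomorphism of Theorem~\ref{main}, the composite ${\rm Sp}_i(\Z_p)^*\xrightarrow{r_{\eet}}H^i_{\eet}(X,\Z_p(i))\to{\rm Sp}_i(\Z_p)^*$ is the identity, so $r_{\eet}$ itself is the asserted isomorphism; compatibility with the rational isomorphism of~\cite{CDN3} is immediate from the construction of $r_{\eet}$ (the integral and rational étale regulators being given by the same symbol formulas). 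This proves~(1). For~(2): by~(1), both $H^i_{\eet}(X,\Z_p(i))\simeq{\rm Sp}_i(\Z_p)^*$ and $H^{i+1}_{\eet}(X,\Z_p(i))\simeq{\rm Sp}_{i+1}(\Z_p)^*(-1)$ are $p$-torsion-free (Section~\ref{duals}), so the Bockstein sequence of $\Z_p(i)\xrightarrow{p}\Z_p(i)\to{\mathbf F}_p(i)$ collapses to $H^i_{\eet}(X,{\mathbf F}_p(i))\simeq H^i_{\eet}(X,\Z_p(i))/p={\rm Sp}_i(\Z_p)^*/p={\rm Sp}_i({\mathbf F}_p)^*$ (using ${\rm Sp}_J(A)\simeq{\rm Sp}_J(\mathbf{Z})\otimes A$ and that reduction modulo $p$ commutes with the relevant countable inverse limits), this isomorphism being the reduction $\overline{r}_{\eet}$ of $r_{\eet}$.

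Granting Theorem~\ref{main}, the one genuinely nontrivial point will be the vanishing of the left-hand term of~(\ref{evening1}) — equivalently, the surjectivity of $1-\varphi^{-1}$ on $H^{i-1}_{\eet}(\calx,\ao\{i\})$ — which forces one to compute that group through Theorem~\ref{main} at level $i-1$ and to push a Mittag--Leffler argument past the completed tensor product; everything else is bookkeeping (tracking the modified Tate twist $\{\,\cdot\,\}$ and the operator $\varphi^{-1}$ through the successive identifications, and checking that the isomorphisms involved are homeomorphisms).
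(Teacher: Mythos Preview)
Your proof is correct and follows essentially the same route as the paper: derive the short exact sequence~(\ref{evening1}) from Corollary~\ref{BMS22}, identify both outer terms via Theorem~\ref{main}, and conclude. You are in fact a bit more careful than the paper on two points: you track the twist $\{1\}$ explicitly when invoking Theorem~\ref{main} at level $i-1$ (so that $1-\varphi^{-1}$ on $H^{i-1}_{\eet}(\calx,\ao\{i\})$ becomes $1-\xi\varphi^{-1}$ after untwisting), and you spell out the Mittag--Leffler argument needed to pass surjectivity of $1-\xi\varphi^{-1}$ through the completed tensor product with ${\rm Sp}_{i-1}(\Z_p)^*$; the paper simply writes the resulting identities. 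Your treatment of part~(2) via the Bockstein sequence and $p$-torsion-freeness is likewise equivalent to the paper's reduction-mod-$p$ argument.
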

   \begin{proof} 
Set  $\calx:=\calx_{\so_C}$.
        For $i\geq 0$, using the natural  isomorphism $H^i_{\eet}(X, {\Z}_p(i))\stackrel{\sim}{\to}H^i_{\proet}(X, \wh{\Z}_p(i))$ \cite[proof of Cor. 3.46]{CDN3}, we pass to pro-\'etale cohomology. Now, by Corollary \ref{BMS22}, we have a natural  short exact sequence 
          \begin{equation}
          \label{evening11}
          0\to {H^{i-1}_{\eet}(\calx, \ao\{i\})}/(1-\phi^{-1})\to H^i_{\proet}(X, \wh{\Z}_p(i))\to H^i_{\eet}(\calx, \ao\{i\})^{\phi^{-1}=1}\to 0.
          \end{equation}
      By Theorem \ref{main}, we have a topological isomorphism $ H^i_{\eet}(\calx, \ao\{i\})\simeq A_{\rm inf}\wh{\otimes}_{\Z_p}{\rm Sp}_i(\Z_p)^*$ and this isomorphism is compatible with the action of $\phi^{-1}$. We get  topological isomorphisms
             \begin{align*}
              H^i_{\eet}(\calx, \ao\{i\})^{\phi^{-1} =1}& \simeq (A_{\rm inf}\wh{\otimes}_{\Z_p}{\rm Sp}_i(\Z_p)^*)^{\phi^{-1}=1}\simeq A_{\rm inf}^{\phi^{-1}=1}\wh{\otimes}_{\Z_p}{\rm Sp}_i(\Z_p)^*\simeq {\rm Sp}_i(\Z_p)^*,\\
              {H^{i-1}_{\eet}(\calx, \ao\{i\})}/(1-\phi^{-1})  & \simeq (A_{\rm inf}\wh{\otimes}_{\Z_p}{\rm Sp}_{i-1}(\Z_p)^*)/(1-\phi^{-1})\simeq (A_{\rm inf}/({1-\phi^{-1}}))\wh{\otimes}_{\Z_p}{\rm Sp}_i(\Z_p)^*\simeq 0.
             \end{align*}
             Hence, by the exact sequence (\ref{evening11}), we get a natural continuous  isomorphism $r_{\proet}: {\rm Sp}_i(\Z_p)^*\stackrel{\sim}{\to}H^i_{\proet}(X, \wh{\Z}_p(i))$.      Since its composition with the natural map 
             $H^i_{\proet}(X, \wh{\Z}_p(i))\stackrel{\sim}{\to}H^i_{\eet}(\calx, \ao\{i\})^{\phi^{-1}=1}$ is a topological isomorphism so is the map $r_{\proet}$ itself, as wanted in claim (1). 
                    
   The last sentence of claim (1) of the theorem is clear. 

  For claim (2), we define the regulator $\overline{r}_{\eet}$ in an analogous way to its integral version $r_{\eet}$ (with which it is compatible by construction). Since 
  ${\rm Sp}_i({\mathbf F}_p)^*\simeq {\rm Sp}_i({\mathbf Z}_p)^*\otimes{\mathbf F}_p$ and $H^i_{\eet}(X, {\mathbf F}_p(i))\simeq H^i_{\eet}(X, {\mathbf Z}_p(i))\otimes{\mathbf F}_p$ (the latter isomorphism by claim (1)), we have $\overline{r}_{\eet}\simeq r_{\eet}\otimes\id_{{\mathbf F}_p}$. Hence, by claim (1),  $\overline{r}_{\eet}$ is an isomorphism, as wanted. 
\end{proof}

\end{document}